\documentclass[a4paper,12pt]{article}

\usepackage{times}
\usepackage{amsfonts}
\usepackage{amsthm}
\usepackage{amsmath}
\usepackage{epic}
\usepackage{times}

\usepackage{enumerate}

\usepackage[margin=3cm]{geometry}
\linespread{1.1}
\usepackage{amsthm,amsmath}
\usepackage{fullpage}

\usepackage{tikz}
\usetikzlibrary{snakes}
\usepackage{soul}

\newcommand{\bw}{\omega}%\textbf{w}}

\newcommand{\abs}[1]{\bigl\lvert #1 \bigr \rvert }
\newcommand{\nats}{{\mathbb N}}
\newcommand{\ints}{{\mathbb Z}}

\newcommand{\abp}{\rho^{\mbox{ab}}}
\newcommand{\p}{\rho}

\newcommand{\card}{{\rm card}}

\def\fpart#1{ \{ #1 \}}

\newtheorem{thm}{Theorem}
\newtheorem{lemm}[thm]{Lemma}
\newtheorem{ques}[thm]{Question}
\newtheorem{theorem}{Theorem}[section]
\newtheorem{lemma}[theorem]{Lemma}
\newtheorem{corollary}[theorem]{Corollary}

\theoremstyle{definition}	 

\newtheorem{remark}[theorem]{Remark}
\newtheorem{problem}{Open problem}

\newcommand{\ignore}[1]{}

\begin{document}

\title{Abelian Complexity in Minimal Subshifts}
\author{Gw{\'e}na{\"e}l Richomme%
\footnote{Universit\'e de Picardie Jules Verne, Laboratoire MIS, 33 Rue Saint Leu, F-80039 Amiens cedex 1, France and
Universit\'e Paul-Val\'ery Montpellier 3, Route de Mende, F-34199 Montpellier cedex 5, France, E-mail: \texttt{gwenael.richomme@univ-montp3.fr}}
\and 
Kalle Saari%
\footnote{Department of Mathematics,
University of Turku,
FI-20014, Finland,
email: \texttt{kasaar@utu.fi}}
\and
Luca Q. Zamboni%
\footnote{
Universit\'e de Lyon, Universit\'e Lyon 1, CNRS UMR 5208 Institut Camille Jordan, B\^atiment du
Doyen Jean Braconnier, 43, blvd du 11 novembre 1918, F-69622 Villeurbanne Cedex, France, email: \texttt{zamboni@math.univ-lyon1.fr}
\indent Reykjavik University, School of Computer Science, Kringlan 1, 103 Reykjavik, Iceland,
email: \texttt{lqz@ru.is}}
}

\maketitle
%\tableofcontents

\abstract{
In this paper we undertake the general study of the Abelian complexity of an infinite word
on a finite alphabet.
%In this paper we formally introduce and study a new measure of the complexity of a minimal subshift on a finite alphabet, which we call Abelian complexity.  
We investigate both similarities and differences between the Abelian complexity and the usual subword complexity.  
%We compute the Abelian complexity of various well-known systems. 
While the Thue-Morse minimal subshift is neither characterized by its Abelian complexity nor by its subword complexity alone, we show that the subshift is completely characterized by the two complexity functions together.
%We show that the  Thue-Morse  minimal subshift 
%is completely characterized by its Abelian complexity together with its usual subword complexity.  
%We explicitly compute  the Abelian complexity of the so-called  Tribonacci word by way of  the spectral properties of the incidence matrix of the generating morphism  $0\mapsto 01,$ $1\mapsto 02$ and $2\mapsto 0.$   
We %also 
give an affirmative  answer to an old question of G.~Rauzy by exhibiting a class of words whose Abelian complexity is everywhere equal to $3.$ We also investigate links between  Abelian complexity and the existence of Abelian powers. Using van der Waerden's Theorem, we show that any minimal subshift having bounded Abelian complexity contains Abelian $k$-powers for every positive integer $k$.
In the case of Sturmian words we prove something stronger: For every Sturmian word $\omega$ and positive integer $k,$   each sufficiently long factor of $\omega$  begins in an Abelian k-power.
%In the case of Sturmian shifts we show something stronger, namely that for every positive integer $k$,  
%all sufficiently long factors in the shift begin in an Abelian $k$-power.

\section{\label{overview}Introduction}

In this paper we undertake the general study of the Abelian complexity of an infinite word on a finite alphabet. Although some of the topics outlined in this paper have already appeared in the literature (see \cite{CovHed,KaboreTapsoba2008TIA,Rauzy2}), 
to date very little is known on the general Abelian theory of words.
In fact, prior to this paper, some of the key notions  had not even been formally defined.

%\framed{G: next paragraphs have been changed in order: to introduce the empty word; to index right infinite word from 0 (since $0 \in \nats$); to define $A^\ints$.
%}

Given a finite non-empty set $A$ (called the {\it alphabet}), we denote by $A^*$, $A^\nats$ and $A^\ints$ respectively the set of finite words, the set of (right) infinite words, and the set of bi-infinite words over the alphabet $A$. 
We endow $A^\nats$ with the topology generated by the metric
\[d(x, y)=\frac 1{2^n}\,\,\mbox{where} \,\, n=\inf\{k :x_k\neq y_k\}\] %replaced min by inf
whenever $x=(x_n)_{n\in \nats}$ and $y=(y_n)_{n\in \nats}$ are two elements of $A^\nats.$ Let $T:A^\nats \rightarrow A^\nats$ denote the {\it shift} transformation defined by $T: (x_n)_{n\in \nats}\mapsto (x_{n+1})_{n\in \nats}.$ By a {\it subshift} on $A$ we mean a pair $(X,T)$ where $X$ is a closed and $T$-invariant subset of $A^\nats.$ A subshift $(X,T)$ is said to be {\it minimal}
whenever $X$ and the empty set are the only $T$-invariant closed subsets of $X.$

Given a finite word $u =a_1a_2\ldots a_n$ with $n \geq 1$ and $a_i \in A,$ we denote the length $n$ of $u$ by $|u|.$ The  \textit{empty word} will be denoted by $\varepsilon$ and we set $|\varepsilon|=0.$   For each $a\in A,$ we let $|u|_a$  denote the number of occurrences of the letter $a$ in $u.$ 
Two words $u$ and $v$ in $A^*$ are said to be {\it Abelian equivalent,} denoted $u\sim_{\mbox{ab}} v,$  if and only if $|u|_a=|v|_a$ for all $a\in A.$ It is readily verified that $\sim_{\mbox{ab}}$ defines an equivalence relation on $A^*.$ 

%Let $A$ be a non-empty finite set. We denote by $A^+$ the set of all strings of the form $a_1a_2\ldots a_n$ where each $a_i\in A.$ An element $u =a_1a_2\ldots a_n\in A^+ $ is called a {\it finite word} on the {\it alphabet} $A$ and  $n$ is called the {\it length} of the word $u$ and denoted $|u|.$ For each $a\in A,$ let $|u|_a$  denote the number of occurrences of the letter $a$ in $u.$ Two words $u$ and $v$ in $A^+$ are said to be {\it Abelian equivalent,} denoted $u\sim_{\mbox{ab}} v,$  if and only if $|u|_a=|v|_a$ for all $a\in A.$ It is readily verified that $\sim_{\mbox{ab}}$ defines an equivalence relation on $A^+.$ 

Given an infinite word
$\omega= \omega _0\omega_1 \omega_2\ldots \in A^\nats$ with $\omega_i\in A$,
we denote by ${\mathcal F}_{\omega}(n)$ the set of all {\it factors} of $\omega$ of length $n,$ that is, the set of all finite words of the form $\omega_{i}\omega_{i+1}\cdots \omega_{i+n-1}$ with $i\geq 0.$
We set \[\p_{\omega}(n)=\mbox{Card}({\mathcal F}_{\omega}(n)).\] The function $\p_{\omega}:\nats \rightarrow \nats$ is called the {\it subword complexity function} of $\omega.$
Given a minimal subshift $(X,T)$ on $A,$ we have that
${\mathcal F}_{\omega}(n)={\mathcal F}_{\omega'}(n)$ for
all $\omega, \omega '\in X$ and $n\in \nats.$ Thus we can define the subword complexity $\p_{(X,T)}(n)$ of a minimal subshift $(X,T)$ by
\[ \p_{(X,T)}(n)=\p_{\omega}(n)\]
for any $\omega \in X.$

Analogously we define ${\mathcal F}^{\mbox{ab}}_{\omega}(n)={\mathcal F}_{\omega}(n)/\sim_{\mbox{ab}}$ and set \[\abp_{\omega}(n)=\mbox{Card}({\mathcal 
F}^{\mbox{ab}}_{\omega}(n)).\]
The function
$\abp_{\omega} :\nats \rightarrow \nats$ which counts the number of pairwise non Abelian equivalent factors of $\omega$ of length $n$ is called the {\it Abelian complexity} of $\omega,$ or {\it ab-complexity} for short\footnote{A different, yet related notion of Abelian complexity, called  {\it Parikh complexity}, was considered in \cite{Ilie2001}.}. As in the case of subword complexity, the definition of Abelian complexity naturally extends to the context of a minimal subshift.% on $A.$

In most instances, the alphabet $A$ will consist of the numbers $\{0,1,2,\ldots ,k-1\}.$  In this case, for each $u\in A^*,$ we denote by
$\Psi(u)$ the {\it Parikh vector} \footnote{Parikh matrices, an extension of Parikh vectors, were recently introduced to characterize words in terms of the occurrences of some scattered subwords (see, \textit{e.g.}, \cite{MaatescuSalomaaSalomaaYu2001TIA,FosseRichomme2004IPL,Salomaa2008TCS}).}  associated to $u,$ that is 
\[\Psi(u)=(|u|_0,|u|_1,|u|_2,\ldots ,|u|_{k-1}).\]
Given an infinite word $\omega \in A^\nats$ we set
\[\Psi_{\omega}(n)=\{\Psi(u)\,|\, u \in {\mathcal F}_{\omega}(n)\}\] and thus we have  $\abp_{\omega}(n)=\mbox{Card}(\Psi_{\omega}(n)).$

There are a number of similarities  between the usual subword complexity of an infinite word and its Abelian complexity. 
For instance both may be used to characterize periodic bi-infinite words. 
Here a word $\omega$  is \emph{periodic} if there exists a positive integer $p$ such that 
$\omega_{i+p} = \omega_i$ for all indices $i$, and it is \emph{ultimately periodic} if $\omega_{i+p} = \omega_i$ for all sufficiently large $i$.
An infinite word is \emph{aperiodic} if it is not ultimately periodic.

\begin{thm}[]
Let $\omega$ be a bi-infinite word over the alphabet $A$. The following properties hold.
\begin{itemize}
\item (M. Morse, G.A. Hedlund, \cite{MorHed1940})
The word $\omega$ is periodic if and only if  $\p_{\omega}(n_0)\leq n_0$ for some $n_0\geq 1$. 
\item (E.M. Coven and G.A. Hedlund, \cite{CovHed} )
The word $\omega$ is periodic  if and only if  $\abp_{\omega}(n_0)=1$ for some $n_0\geq 1$. (See also Lemma~\ref{L:aperiodic})
\end{itemize}
\end{thm}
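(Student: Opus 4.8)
The plan is to treat each equivalence as a pair of implications, dispatching first the easy directions in which periodicity forces the stated complexity bound, and then attacking the two converses, of which the Morse--Hedlund one carries the real content. For the forward directions, suppose $\omega$ is periodic of period $p$, so $\omega_{i+p}=\omega_i$ for every index $i$. Then any factor of length $n$ is determined by the residue modulo $p$ of its starting position, so $\omega$ has at most $p$ distinct factors of each length; in particular $\p_\omega(p)\le p$, which settles the Morse--Hedlund direction with $n_0=p$. For the Abelian statement, every factor of length $p$ is a cyclic rotation of the block $\omega_0\omega_1\cdots\omega_{p-1}$, and cyclic rotations preserve the number of occurrences of each letter; hence all length-$p$ factors share a common Parikh vector and $\abp_\omega(p)=1$, giving the Coven--Hedlund direction with $n_0=p$.

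The converse of the Coven--Hedlund statement admits a short telescoping argument, which I would carry out first. Assume $\abp_\omega(n_0)=1$, so that all factors of length $n_0$ carry one and the same Parikh vector. Fix an index $i$ and compare the adjacent factors $u=\omega_i\cdots\omega_{i+n_0-1}$ and $v=\omega_{i+1}\cdots\omega_{i+n_0}$, whose common overlap is $w=\omega_{i+1}\cdots\omega_{i+n_0-1}$. Writing $e_a$ for the coordinate vector recording one occurrence of the letter $a$, one has $\Psi(u)=\Psi(w)+e_{\omega_i}$ and $\Psi(v)=\Psi(w)+e_{\omega_{i+n_0}}$; since $\Psi(u)=\Psi(v)$ by hypothesis, the two coordinate vectors coincide, forcing $\omega_i=\omega_{i+n_0}$. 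As $i$ is arbitrary, $\omega$ is periodic of period $n_0$. Taking the contrapositive, this also delivers the sharp form that $\abp_\omega(n)\ge 2$ for every $n$ whenever $\omega$ is aperiodic, which is the assertion of Lemma~\ref{L:aperiodic}.

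The converse of the Morse--Hedlund statement is where the main obstacle lies, and I would prove it in the sharp contrapositive form: if $\omega$ is aperiodic then $\p_\omega(n)\ge n+1$ for all $n\ge 1$. The two ingredients are that $\p_\omega$ is non-decreasing, since each length-$n$ factor is a prefix of some length-$(n+1)$ factor, and the rigidity lemma stating that a single equality $\p_\omega(n+1)=\p_\omega(n)$ already forces $\omega$ to be periodic. Granting the lemma, aperiodicity makes $\p_\omega$ strictly increasing; as an aperiodic word uses at least two letters we have $\p_\omega(1)\ge 2$, whence $\p_\omega(n)\ge 2+(n-1)=n+1>n$, contradicting $\p_\omega(n_0)\le n_0$. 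The lemma is the delicate point: the equality $\p_\omega(n+1)=\p_\omega(n)$ says the map sending a length-$(n+1)$ factor to its length-$n$ prefix is a bijection, so each length-$n$ factor has a unique right extension, and the factor occupying a given position dictates the next letter. Since only finitely many factors occur, two positions carrying the same length-$n$ factor yield identical right tails and hence a period. The part requiring genuine care is upgrading this ultimate right-periodicity to two-sided periodicity of the bi-infinite word $\omega$: here I would invoke the symmetric left-determinism (equivalently the recurrence forced by the bi-infinite structure) to conclude that the period is exact.
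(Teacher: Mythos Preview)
Your argument is correct on both counts, but it is worth noting how it relates to what the paper actually does. The paper does not prove the Morse--Hedlund item at all; it is merely cited. Your sketch via the rigidity lemma (equal complexity at consecutive lengths forces unique one-sided extension, and in the bi-infinite setting symmetrically unique left extension, hence exact periodicity) is standard and sound, and simply goes beyond what the paper supplies.

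For the Coven--Hedlund item the paper does give a proof, as Lemma~\ref{L:aperiodic}, but by a different route: it argues the contrapositive, observing that a non-periodic word has arbitrarily long right special factors, and that a right special factor $u$ yields two factors $u\alpha$, $u\beta$ whose length-$|u|{+}1$ suffixes (or simply $u\alpha$ and $u\beta$ themselves) have distinct Parikh vectors, whence $\abp_\omega(n)\ge 2$ for every $n$. Your sliding-window computation $\Psi(u)-\Psi(v)=e_{\omega_i}-e_{\omega_{i+n_0}}$ is more elementary and self-contained: it avoids any appeal to the existence of right special factors (which is itself a form of the Morse--Hedlund theorem) and directly exhibits $n_0$ as a period. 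The paper's approach, on the other hand, fits naturally into the symbolic-dynamics framework used elsewhere in the article and immediately gives the uniform statement $\abp_\omega(n)\ge 2$ for all $n$ in the aperiodic case.
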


%\begin{theorem}[M. Morse, G.A. Hedlund \cite{MorHed1940}] \label{T:MH}
%Let $\omega$
%be a bi-infinite (respectively right infinite) word over the alphabet $A$. 
%\end{theorem}

%\noindent Analogously:

% for a bi-infinite word, the following are equivalent: $w$ is periodic, $\rho_w(n_0)\leq n_0$ or some $n_0, \rho_w(n)$ is bounded, 
% $\rho^{ab}_w(n_0)=1$ for some $n_0$.

%\begin{theo}{Lemma~\ref{L:aperiodic}} 
%{\rm (E.M. Coven and G.A. Hedlund, \cite[Remark 4.07]{CovHed})\textbf{.}}
%Let $\omega$ be an infinite (bi-infinite or right infinite) word over the alphabet $A.$ Then  
%$\omega$ is periodic  if and only if 
%$\abp_{\omega}(n_0)=1$ for some $n_0\geq 1.$ 
%\end{theo}
 
\noindent The condition in the first item of the previous theorem also gives rise to a characterization of ultimately periodic words by means of subword complexity.
Abelian complexity does not, however, yield such a characterization. Indeed, both Sturmian words (see below) and
the ultimately periodic word  $01^\infty = 0111\cdots$  have the same, constant $2$, Abelian complexity.

As another example, both complexity functions may be used to characterize \textit{Sturmian} words:
%
% that is, the infinite words 
%$\omega\in A^\nats$ with  $\p_{\omega}(n)=n+1$ for all $n\geq 0$.

\begin{thm}\label{SturmianChar}
Let $\omega$ be an aperiodic infinite word over the alphabet $\{0,1\}$. The following conditions are equivalent:
\begin{itemize}
\item The word $\omega$ is balanced, that is, {\it Sturmian}.
\item (M. Morse, G.A. Hedlund, \cite{MorHed1940}). The word $\omega$ satisfies $\p_{\omega}(n_0) = n+1$ for all $n \geq 0$. 
\item (E.M. Coven, G.A. Hedlund, \cite{CovHed}). The word $\omega$ satisfies $\abp_{\omega}(n)=2$ for all $n\geq 1$. (See also the discussion in the beginning  of Section~\ref{RauzyQuestion}).
\end{itemize}
\end{thm}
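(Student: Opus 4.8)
The plan is to prove the chain of equivalences in Theorem~\ref{SturmianChar} by establishing the cycle: balanced $\Rightarrow$ $\abp_\omega(n)=2$ for all $n\ge 1$ $\Rightarrow$ $\p_\omega(n)=n+1$ for all $n\ge 0$ $\Rightarrow$ balanced. Since the equivalence of balancedness with the subword-complexity condition is the classical Morse--Hedlund characterization of Sturmian words, the real work lies in connecting the Abelian condition to the other two. I would structure the argument around the binary alphabet $\{0,1\}$, where for a factor $u$ of length $n$ the Parikh vector $\Psi(u)=(|u|_0,|u|_1)$ is completely determined by the single number $|u|_1$ (since $|u|_0=n-|u|_1$). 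Hence $\abp_\omega(n)$ equals the cardinality of the set $\set{|u|_1}{u\in\mathcal{F}_\omega(n)}$, and the Abelian complexity measures exactly how many distinct values the count of $1$'s can take among length-$n$ factors.

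First I would prove that balanced implies $\abp_\omega(n)=2$. Recall that $\omega$ is balanced if for all factors $u,v$ of the same length one has $\bigl| |u|_1-|v|_1\bigr|\le 1$. This immediately forces the set of possible values of $|u|_1$ over $\mathcal{F}_\omega(n)$ to be contained in a set of two consecutive integers, so $\abp_\omega(n)\le 2$; and since $\omega$ is aperiodic, the Coven--Hedlund result in the previous theorem (the characterization of periodicity by $\abp_\omega(n_0)=1$) rules out $\abp_\omega(n)=1$, giving exactly $2$. I would invoke that earlier theorem directly rather than reproving the lower bound.

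For the converse direction, assuming $\abp_\omega(n)=2$ for all $n\ge 1$, I would show $\omega$ is balanced and then appeal to Morse--Hedlund for the subword-complexity statement. The key step is to argue contrapositively: if $\omega$ were not balanced, there would exist a length $n$ and two factors $u,v\in\mathcal{F}_\omega(n)$ with $\bigl| |u|_1-|v|_1\bigr|\ge 2$, and I would leverage the well-known fact that in a recurrent (in particular minimal) setting an unbalance of size $2$ propagates: one can find factors realizing at least three distinct values of $|u|_1$ at some length, forcing $\abp_\omega(n')\ge 3$ for some $n'$, a contradiction. This propagation argument is the main obstacle, since it requires care to show that a single large unbalance cannot be ``isolated'' but must generate intermediate values, which one typically handles by an extension/concatenation argument using minimality (every factor recurs, so one can splice occurrences together to interpolate intermediate counts).

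The hard part will therefore be making the ``unbalance forces a third value'' step fully rigorous; I expect the cleanest route is to cite or reprove the standard lemma that for binary words, $\abp_\omega(n)\le 2$ for all $n$ is equivalent to balancedness, which is essentially due to the observation that the attainable values of $|u|_1$ for fixed $n$ form a set of consecutive integers (a connectedness property that itself follows from the ability to slide a length-$n$ window one position at a time, changing $|u|_1$ by at most $1$ at each step). Once balancedness is secured in both directions and linked to $\p_\omega(n)=n+1$ via Morse--Hedlund, the cycle closes. I would also note the forward-reference to the discussion at the start of Section~\ref{RauzyQuestion} indicated in the statement, where the finer structure of the binary Abelian complexity is developed.
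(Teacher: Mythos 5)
Your argument is essentially correct, but note that the paper itself never proves this theorem: it is stated as a citation result, with the subword-complexity characterization attributed to Morse--Hedlund \cite{MorHed1940} and the Abelian characterization to Coven--Hedlund \cite{CovHed} (restated as Theorem~\ref{T:Sturm}, again only cited, as ``an easy consequence of Lemma 4.02 and remark 4.07'' of that paper). So you are supplying a proof where the paper supplies a reference. Your final argument is the right one, and it is exactly what the paper's own Section~\ref{sec:generalities} toolkit supports: the sliding-window observation (Lemma~\ref{F:basic}) shows that for a binary word the attainable values of $|u|_1$ over $\mathcal{F}_\omega(n)$ form a set of consecutive integers, so $\abp_\omega(n)\leq 2$ for all $n$ is literally equivalent to balancedness, with no recurrence hypothesis; aperiodicity plus Lemma~\ref{L:aperiodic} gives $\abp_\omega(n)\geq 2$; and Morse--Hedlund closes the loop with the subword-complexity condition.

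One caution: your first-pass description of the converse direction --- invoking minimality/recurrence and a ``splicing'' argument to propagate an unbalance into a third Parikh value --- would have been a genuine problem had you committed to it, because the theorem assumes only aperiodicity, not recurrence or minimality, so any proof leaning on those hypotheses proves a weaker statement. Fortunately the difficulty you anticipated there is illusory: if $u,v$ are factors of the \emph{same} length $n$ with $\bigl||u|_1-|v|_1\bigr|\geq 2$, then sliding the length-$n$ window between an occurrence of $u$ and an occurrence of $v$ changes $|\cdot|_1$ by at most $1$ per step, so an intermediate value is attained at that same length $n$ and $\abp_\omega(n)\geq 3$ immediately --- no interpolation by concatenation, and no choice of a different length $n'$, is needed. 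You do identify this cleaner route at the end, so the proof you would actually write is sound; just drop the recurrence detour entirely rather than presenting it as the main obstacle.
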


We recall an infinite word $\omega \in A^\nats$ is said to be $C$-{\it balanced}  ($C$ a positive integer) if $||U|_a-|V|_a|\leq C$ for all $a\in A$ and all factors $U$ and $V$ of $\omega$ of equal length. A word $\omega$
is called {\it balanced} it it is $1$-balanced.

%
%\begin{theo}{Theorem~\ref{T:Sturm}. \rm (E.M. Coven, G.A. Hedlund, \cite{CovHed}).}\label{CoHed}
%An infinite word $\omega \in A^\nats$ is  Sturmian   if and only if $\abp_{\omega}(n)=2$ for all $n\geq 1.$
%\end{theo}

% 

However, in some cases the two complexities  exhibit radically different behaviors. For instance, as was originally pointed out to us by P. Arnoux \cite{Arnoux2008}, consider the binary Champernowne word 
\[ {\mathcal C} =01101110010111011110001001\ldots\]
obtained by concatenating the binary representation 
of the consecutive natural numbers. Let $\omega$ denote the morphic image of $\mathcal C$
 under the Thue-Morse morphism $\mu$ defined by $0\mapsto 01$ and $1\mapsto 10$.  Then while $\p_{\omega}(n)$ has exponential growth, we will see in Theorem~\ref{T:caracAbCompTM} that $\abp_{\omega}(n)\leq 3$ for all $n.$

In contrast, there exist infinite words having linear subword complexity and unbounded ab-complexity. 
In fact, in \cite{CassFerZam}, the third author together with J. Cassaigne and S. Ferenczi  established the existence of a word with subword complexity $\p(n)=2n+1$ which is not $C$-balanced for any positive integer $C.$ 
An infinite word $\omega \in A^\nats$ is said to be $C$-{\it balanced}  ($C$ a positive integer) if $||U|_a-|V|_a|\leq C$ for all $a\in A$ and all factors $U$ and $V$ of $\omega$ of equal length. We prove that

\begin{lemm}
$\abp_{\omega}(n)$ is bounded if and only if $\omega$ is $C$-balanced for some positive integer $C.$
\end{lemm}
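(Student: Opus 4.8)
The plan is to prove the two implications separately, the substantive content lying in deducing balance from boundedness of the ab-complexity.

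For the forward direction (balance implies bounded ab-complexity), suppose $\omega$ is $C$-balanced over an alphabet $A$ with $\mathrm{Card}(A)=k$. Fix a length $n$. For each letter $a\in A$, applying the $C$-balance condition to all factors of length $n$ shows that the integers $|U|_a$, as $U$ ranges over $\mathcal{F}_\omega(n)$, all lie in a single interval of length $C$, and hence assume at most $C+1$ distinct values. Since a Parikh vector $\Psi(U)$ is determined by its $k$ coordinates $|U|_a$, there are at most $(C+1)^k$ possible Parikh vectors, so $\abp_\omega(n)=\mathrm{Card}(\Psi_\omega(n))\le (C+1)^k$ for every $n$. This uniform bound gives the boundedness of $\abp_\omega$.

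For the converse I would argue by contraposition: assuming $\omega$ is not $C$-balanced for any $C$, I will show that $\abp_\omega$ is unbounded. The failure of $C$-balance for every $C$ means that the supremum of $||U|_a-|V|_a|$, taken over all letters $a$ and all pairs of equal-length factors $U,V$, is infinite. As $A$ is finite, there is a single letter $a$ for which this supremum is already infinite; fix such an $a$. Then for every integer $D$ there exist a length $n$ and factors $U,V\in\mathcal{F}_\omega(n)$ with, say, $|U|_a-|V|_a\ge D$.

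The key step is a discrete intermediate value argument along the sliding window. Writing $W_\ell=\omega_\ell\omega_{\ell+1}\cdots\omega_{\ell+n-1}$ for the length-$n$ factor starting at position $\ell$, shifting the window by one position replaces $\omega_\ell$ by $\omega_{\ell+n}$, so
\[
|W_{\ell+1}|_a-|W_\ell|_a\in\{-1,0,1\}.
\]
Thus $\ell\mapsto|W_\ell|_a$ is an integer sequence whose consecutive terms differ by at most one. Choosing starting positions $i,j$ with $U=W_i$ and $V=W_j$ and letting $\ell$ run between $i$ and $j$, the sequence $|W_\ell|_a$ must pass through every integer value between $|V|_a$ and $|U|_a$. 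This produces at least $D+1$ distinct values of $|W_\ell|_a$, each realized by a factor of the same length $n$, and hence at least $D+1$ pairwise distinct Parikh vectors in $\Psi_\omega(n)$. Therefore $\abp_\omega(n)\ge D+1$, and since $D$ was arbitrary, $\abp_\omega$ is unbounded.

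The main obstacle is precisely this converse direction: a large gap $|U|_a-|V|_a$ between two particular factors does not by itself produce many distinct Parikh vectors, since the intermediate values need not a priori occur. The sliding-window observation that the $a$-count changes by at most one per shift is exactly what forces all intermediate values to be realized, converting a single large gap into a large count of distinct Abelian classes.
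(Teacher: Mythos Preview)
Your proof is correct and follows the same approach as the paper. The paper packages the sliding-window intermediate value observation into a separate preliminary lemma (Lemma~\ref{F:basic}) and then argues the direct implication (bounded by $K$ implies $(K-1)$-balanced) rather than the contrapositive, but the substantive content---the counting bound $(C+1)^{\mathrm{Card}(A)}$ in one direction and the step-by-$\pm1$ behaviour of $|W_\ell|_a$ in the other---is identical.
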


\noindent Hence the word constructed in \cite{CassFerZam} of subword complexity $\p(n)=2n+1$  has  unbounded
 ab-complexity.
 
Finally, in some cases, the two complexity functions work in tandem to give explicit characterizations of certain minimal subshifts. For instance we prove that

\begin{thm} Let $\mathbf{TM}_0\in \{0,1\}^\nats$ denote  the fixed point  beginning in $0$ of the Thue-Morse morphism $0\mapsto 01,$ $1\mapsto 10,$ and let $\omega$ be a recurrent infinite word.
Then
\[\p_{\omega}(n)=\p_{\mathbf{TM}_0}(n)\,\,\, n\in \nats\]
and
\[\abp_{\omega}(n)=\abp_{\mathbf{TM}_0}(n)\,\,\, n\in \nats\]
if and only if  $\omega$ is in the subshift generated by $\mathbf{TM}_0.$

\end{thm}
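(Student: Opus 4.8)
The plan is to prove the two implications separately, the forward one being routine and the converse carrying all the content. For the forward direction, let $X$ denote the subshift generated by $\mathbf{TM}_0$. Since the Thue--Morse morphism $\mu$ is primitive, $X$ is minimal, so every $\omega\in X$ has exactly the same factors as $\mathbf{TM}_0$, i.e. $\mathcal{F}_\omega(n)=\mathcal{F}_{\mathbf{TM}_0}(n)$ for all $n$. Consequently $\p_\omega=\p_{\mathbf{TM}_0}$ and $\abp_\omega=\abp_{\mathbf{TM}_0}$, which settles this direction immediately.

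For the converse, assume $\omega$ is recurrent with $\p_\omega=\p_{\mathbf{TM}_0}$ and $\abp_\omega=\abp_{\mathbf{TM}_0}$, and aim to show $\omega\in X$. From $\p_\omega(1)=2$ the word is over a two-letter alphabet, identified with $\{0,1\}$, and $\p_\omega(2)=4$ forces all of $00,01,10,11$ to be factors. The decisive input is at length $3$, where $\p_{\mathbf{TM}_0}(3)=6$ while $\abp_{\mathbf{TM}_0}(3)=2$, the two Parikh vectors being $(2,1)$ and $(1,2)$. A length-$3$ binary word has Parikh vector $(3,0)$ or $(0,3)$ only if it is $000$ or $111$, so each of those classes contains a single word while the classes $(2,1)$ and $(1,2)$ contain three words each. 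Hence the only way to distribute $6$ distinct factors over just $2$ Parikh classes is to use precisely $(2,1)$ and $(1,2)$, giving $\mathcal{F}_\omega(3)=\{0,1\}^3\setminus\{000,111\}=\mathcal{F}_{\mathbf{TM}_0}(3)$; in particular $\omega$ avoids the cubes $000$ and $111$. This is exactly where the two complexity functions must be used jointly: subword complexity alone permits a recurrent binary word with $\p_\omega=\p_{\mathbf{TM}_0}$ that contains a cube. Feeding cube-freeness back into the count at length $4$, where the binary words avoiding $000$ and $111$ number exactly $\p_{\mathbf{TM}_0}(4)=10$, already yields $\mathcal{F}_\omega(4)=\mathcal{F}_{\mathbf{TM}_0}(4)$.

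The remaining task is to propagate the agreement $\mathcal{F}_\omega(n)=\mathcal{F}_{\mathbf{TM}_0}(n)$ to every $n$. Because the Thue--Morse subshift is not of finite type, this cannot be done by counting factors alone: for large $n$ there are length-$(n+1)$ words all of whose length-$n$ subwords are Thue--Morse factors that are nonetheless not themselves Thue--Morse factors. I would instead invoke the recognizability of $\mu$. Cube-freeness confines the blocks $00$ and $11$ to the junctions between consecutive images $\mu(0)=01$ and $\mu(1)=10$; since $\omega$ is recurrent these blocks recur, fixing a single admissible phase and producing a factorization $\omega=s\,\mu(\omega')$ with $s\in\{\varepsilon,0,1\}$ and $\omega'\in\{0,1\}^\nats$. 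Using the behaviour of $\p$ and $\abp$ under $\mu$ together with the injectivity of $\mu$ on factors, one then checks that $\omega'$ is again recurrent with $\p_{\omega'}=\p_{\mathbf{TM}_0}$ and $\abp_{\omega'}=\abp_{\mathbf{TM}_0}$. Iterating yields $\omega=s_0\,\mu(\omega_1)$, $\omega_1=s_1\,\mu(\omega_2),\dots$, so that $\omega$ agrees on longer and longer prefixes with elements $\mu^k(\omega_k)$ of $X$; since $X$ is closed this forces $\omega\in X$, and minimality then returns the equality of factor sets at every length.

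The hard part will be the bootstrapping step: proving that the small determined factors force a \emph{unique} $\mu$-block parsing of $\omega$ and that the desubstituted word $\omega'$ inherits both complexity functions exactly. This is where the Thue--Morse structure and the simultaneous use of $\p$ and $\abp$ are indispensable and where the bulk of the verification lies; the length-$3$ argument above only supplies the cube-freeness that makes the desubstitution possible at all.
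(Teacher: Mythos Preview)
Your forward direction matches the paper's. For the converse the paper takes a much shorter route than your desubstitution scheme: it quotes a classification result of Ali--Aberkane and Brlek (cited as \cite{AbeBrl}) to the effect that every recurrent binary word with the Thue--Morse subword complexity lies either in the orbit closure of $\mathbf{TM}_0$ or in that of $\Delta(\mathbf{TM}_0)$, where $\Delta$ is the letter-doubling morphism $0\mapsto 00$, $1\mapsto 11$. The Abelian hypothesis is then used exactly once, at $n=3$: any word in the orbit closure of $\Delta(\mathbf{TM}_0)$ contains the four factors $000,100,110,111$, so it has $\abp(3)=4\neq 2=\abp_{\mathbf{TM}_0}(3)$. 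Your length-$3$ computation is the same observation in different clothing.

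Where your plan runs into a genuine gap is the bootstrapping step you yourself flag as ``the hard part''. You assert that the desubstituted word $\omega'$ again satisfies $\abp_{\omega'}=\abp_{\mathbf{TM}_0}$, but this cannot be deduced from $\abp_{\mu(\omega')}=\abp_{\mathbf{TM}_0}$: by the paper's own Theorem~\ref{T:caracAbCompTM}, \emph{every} aperiodic binary $\omega'$ satisfies $\abp_{\mu(\omega')}=\abp_{\mathbf{TM}_0}$, so the Abelian hypothesis is entirely consumed after one desubstitution and carries no information about $\omega'$. Nor does the avoidance of $000$ and $111$ descend (e.g.\ $\mu(000)=010101$), so you cannot simply desubstitute a second time. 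What remains is to show $\p_{\omega'}=\p_{\mathbf{TM}_0}$ from $\p_{\mu(\omega')}=\p_{\mathbf{TM}_0}$ and then somehow distinguish $\omega'$ from a word in the $\Delta(\mathbf{TM}_0)$ orbit closure without any Abelian information left---which is precisely the content of the \cite{AbeBrl} classification that the paper invokes. In short, the inductive scheme cannot close as written; you either need the external classification or a different way to exploit the Abelian hypothesis beyond the first step.
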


\noindent Recall that an infinite word is \textit{recurrent} if each of its factors occurs infinitely often in $w$. 

\vspace{.1in}

Inspired by the last characterization of Sturmian words given in Theorem~\ref{SturmianChar},  G.~Rauzy asked the following question:

\begin{ques}[G. Rauzy, \cite{Rauzy2}] Does there exist an infinite word $\omega$
whose ab-complexity $\abp_{\omega}(n)=3$ for all $n\geq 1.$ 
\end{ques}

He suggests that the likely answer to this question is NO. However, we give two positive solutions to Rauzy's question:

\begin{thm} Let $\omega $ be an aperiodic balanced word on the alphabet $\{0,1,2\}.$ Then $\abp_{\omega}(n)=3$ for all $n\geq 1.$
\end{thm}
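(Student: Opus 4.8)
The plan is to prove the two inequalities $\abp_{\omega}(n)\le 3$ and $\abp_{\omega}(n)\ge 3$ for every $n\ge 1$. Fix $n$ and, for each letter $a\in\{0,1,2\}$, set $m_a=\min\{\,|u|_a : u\in{\mathcal F}_{\omega}(n)\,\}$. Since $\omega$ is balanced, every factor $u$ of length $n$ satisfies $|u|_a\in\{m_a,m_a+1\}$, so its Parikh vector can be written $\Psi(u)=(m_0,m_1,m_2)+\delta$ with $\delta\in\{0,1\}^3$. Summing coordinates gives $\sum_a\delta_a=n-(m_0+m_1+m_2)=:t$, a constant independent of $u$. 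Hence the number of admissible $\delta$, and a fortiori $\abp_{\omega}(n)$, is at most $\binom{3}{t}\le 3$, which proves the upper bound. Moreover $t\notin\{0,3\}$: either value would leave a single admissible $\delta$, i.e.\ $\abp_{\omega}(n)=1$, and by the Coven--Hedlund characterization of periodicity (see Lemma~\ref{L:aperiodic}) $\omega$ would then be periodic, contrary to hypothesis. So $t\in\{1,2\}$, exactly three Parikh vectors are combinatorially possible, and the task reduces to showing that all three actually occur.

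The case $t=2$ is automatic. For each letter $a$ choose a factor $u^{(a)}$ attaining $|u^{(a)}|_a=m_a$, so that its $\delta$ has $\delta_a=0$; since $\sum_b\delta_b=2$ and each $\delta_b\le 1$, the remaining two coordinates are forced to equal $1$. Thus $u^{(a)}$ realises the vector $(m_0,m_1,m_2)+\mathbf 1-e_a$, and letting $a$ range over the three letters produces three distinct Parikh vectors. Hence $\abp_{\omega}(n)=3$ whenever $t=2$.

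It remains to treat $t=1$. Here the three candidate vectors are $(m_0,m_1,m_2)+e_a$, and $(m_0,m_1,m_2)+e_a$ occurs precisely when some factor attains $|u|_a=m_a+1$, that is, when letter $a$ is \emph{not} constant on ${\mathcal F}_{\omega}(n)$. Two constant letters would force the third to be constant too (again $\abp_{\omega}(n)=1$, excluded), so at most one letter is constant and $\abp_{\omega}(n)\in\{2,3\}$; the whole theorem thus comes down to ruling out the possibility that exactly one letter, say $2$, is constant. If $|u|_2=m_2$ for every $u\in{\mathcal F}_{\omega}(n)$, then comparing consecutive windows gives $[\omega_i=2]=[\omega_{i+n}=2]$ for all $i$; the indicator sequence of the letter $2$ is therefore periodic of period $n$, and the letter $2$ has rational frequency $m_2/n$.

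The main obstacle is exactly this last configuration: I must show that an aperiodic balanced word on $\{0,1,2\}$ admits no periodically distributed letter, equivalently that all three letter frequencies (which exist because balance forces uniform frequencies) are irrational. The route I would take begins with the remark that, by balance, the indicator sequence of each single letter is a $1$-balanced binary word, hence by Theorem~\ref{SturmianChar} either ultimately periodic or Sturmian, the periodic alternative corresponding to a rational frequency. If all three frequencies were rational, passing to a common-denominator length $N$ would make every letter constant, giving $\abp_{\omega}(N)=1$ and hence periodicity, a contradiction. The delicate sub-case is when letter $2$ alone has rational frequency while $0$ and $1$ stay Sturmian. Deleting the periodically placed $2$'s leaves an aperiodic binary word $\sigma$ on $\{0,1\}$, and $\omega$ is recovered by interleaving $\sigma$ into the fixed period-$n$ template of gaps between consecutive $2$'s. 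I would then exploit the fact that windows of $\omega$ of one common length can straddle this template so as to enclose $\sigma$-factors of two different lengths $k$ and $k+1$; comparing their numbers of $0$'s and invoking the equidistribution of $\{k\alpha_0\}$ modulo $1$ (legitimate since $\alpha_0$ is irrational) yields two equally long factors of $\omega$ whose $0$-counts differ by $2$, contradicting balance. Carrying this through for an arbitrary (not equally spaced) gap template, and correctly transferring the balance of $\omega$ into a usable balance statement for the factors of $\sigma$ at all the relevant lengths, is where I expect the real work to lie. Once it is done, no letter is constant at any length, every candidate vector in the $t=1$ case is realised, and $\abp_{\omega}(n)=3$ for all $n\ge 1$.
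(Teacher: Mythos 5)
Your upper bound and your $t=2$ case are correct and pleasantly elementary: writing each Parikh vector as $(m_0,m_1,m_2)+\delta$ with $\delta\in\{0,1\}^3$ of constant sum $t$, excluding $t\in\{0,3\}$ via Lemma~\ref{L:aperiodic}, and realizing all three vectors when $t=2$ by choosing, for each letter, a factor attaining the minimal count of that letter --- this is sound, and arguably cleaner than the paper's own counting argument for $\abp_{\omega}(n)\leq 3$. But the proposal is not a proof: in the case $t=1$ everything hinges on excluding the configuration in which exactly one letter, say $2$, has constant count in all factors of length $n$ (equivalently, the indicator sequence of $2$ is purely periodic), and for this configuration you offer only a plan, which you yourself flag as ``where I expect the real work to lie.'' That step is not a routine verification; it is the heart of the theorem. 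Concretely, after deleting the periodically placed $2$'s, a window of $\omega$ of fixed length $L$ encloses factors of the residual word $\sigma$ starting only along an arithmetic progression of positions (the window advances by $n-m_2$ letters of $\sigma$ per period of the template), so you cannot freely pick which $\sigma$-factors of lengths $k$ and $k+1$ to compare; realizing both extremal $0$-counts at a single window length is exactly the delicate point, and equidistribution of $\{k\alpha_0\}$ alone does not settle it. A smaller inaccuracy: your claim that three rational frequencies would force $\abp_{\omega}(N)=1$ for a common-denominator $N$ is not right as stated (an ultimately periodic indicator need not give constant counts); what that case actually yields is that $\omega$ itself is ultimately periodic, contradicting aperiodicity.

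For comparison, the paper (Theorem~\ref{T:answer Rauzy 1}, stated there with the additional hypothesis of uniform recurrence) gets the lower bound from heavy structural input: Hubert's theorem \cite{Hu}, by which $\omega$ is, up to isomorphism, obtained from a Sturmian word by replacing the $x$'s with $(01)^\infty$ and the $y$'s with $2^\infty$, together with Hubert's complexity formula $\p_{\omega}(n)=2(n+1)$ for large $n$; a right-special-factor analysis then produces three distinct Parikh vectors at every length. This machinery is precisely what disposes of the case you left open, since under Hubert's description no letter of $\omega$ can be periodically distributed. So to complete your argument you must either prove that structural fact from scratch --- a genuinely nontrivial task, essentially a piece of Hubert's classification --- or invoke \cite{Hu}, at which point you are back to the paper's route. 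As it stands, your argument only establishes $\abp_{\omega}(n)\in\{2,3\}$, with equality to $3$ proved whenever $t=2$ or no letter has constant count.
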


\begin{thm} Let $\omega'\in \{0,1\}^\nats$ be any aperiodic infinite word, and let $\omega$ be the image of $\omega'$ under the morphism
$0\mapsto  012,$ and $ 1\mapsto  021.$ Then $\abp_{\omega}(n)=3$ for all $n\geq 1.$
\end{thm}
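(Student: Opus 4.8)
The plan is to exploit the rigid block structure created by the morphism. Write $\phi$ for the morphism $0\mapsto 012$, $1\mapsto 021$, so that $\omega=\phi(\omega')$ is a concatenation of length-$3$ blocks, each block being a permutation of $012$. Two facts drive everything: every block has Parikh vector $(1,1,1)$, and every block begins with the letter $0$, so the occurrences of $0$ in $\omega$ sit exactly at the positions divisible by $3$. First I would record, for a block $B\in\{012,021\}$, the Parikh vectors of its proper prefixes and proper suffixes: the empty one gives $(0,0,0)$; the length-$1$ prefix is always $0$, contributing $(1,0,0)$; the length-$1$ suffix contributes $(0,0,1)$ if $B=012$ and $(0,1,0)$ if $B=021$; the length-$2$ prefix contributes $(1,1,0)$ or $(1,0,1)$ according to the type of $B$; and the length-$2$ suffix is always $(0,1,1)$.

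Next I would fix $n\ge 1$ and decompose an arbitrary factor $u$ of $\omega$ of length $n$ along the block boundaries as $u=t\,B_1\cdots B_m\,h$, where $t$ is a (possibly empty) proper suffix of a block, each $B_i$ is a full block, and $h$ is a (possibly empty) proper prefix of a block. Then $\Psi(u)=\Psi(t)+m(1,1,1)+\Psi(h)$. The starting position of $u$ modulo $3$ fixes $|t|$, and together with $n$ this fixes both $|h|$ and the number $m$ of full blocks; crucially, $m$ depends only on $n$ and the starting phase, so it is constant across all factors of length $n$ with a common phase. Splitting into the three cases $n\equiv 0,1,2\pmod 3$ and feeding in the short table above, I would check that in each case the set of attainable Parikh vectors has at most three elements. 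For instance, when $n=3N$ one finds $|u|_0=N$ for every factor, whence $\Psi(u)\in\{(N,N,N),(N,N-1,N+1),(N,N+1,N-1)\}$. This gives the upper bound $\abp_{\omega}(n)\le 3$.

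For the matching lower bound I would show that each of the three candidate vectors is actually realized. When $n\equiv 1$ or $n\equiv 2\pmod 3$ this is easy: each non-trivial vector comes from a factor in which only a \emph{single} partial block is free to vary, so it suffices that both block types $012$ and $021$ occur in $\omega$, i.e.\ that both letters $0$ and $1$ occur in $\omega'$. This holds because $\omega'$, being aperiodic, is non-constant, and the three vectors then appear directly. The delicate case, and the main obstacle, is $n=3N\equiv 0\pmod 3$: here the two non-trivial vectors $(N,N-1,N+1)$ and $(N,N+1,N-1)$ arise only from a factor that begins at the last letter of a block and ends two letters into a later block, and they require, respectively, the patterns $(\omega'_j,\omega'_{j+N})=(0,1)$ and $(1,0)$ in $\omega'$.

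To produce both patterns I would isolate the following consequence of aperiodicity and apply it with $d=N$: for every $d\ge 1$, an aperiodic $\omega'\in\{0,1\}^\nats$ contains both $(0,1)$ and $(1,0)$ at distance $d$. Indeed, if, say, $(1,0)$ never occurs at distance $d$, then $\omega'_k=1$ forces $\omega'_{k+d}=1$; hence along each residue class modulo $d$ the word is of the form $0^{\ast}1^{\infty}$ or $0^{\infty}$, and in particular eventually constant. As there are only finitely many residue classes, $\omega'$ would then be ultimately periodic with period $d$, contradicting aperiodicity; the argument for $(0,1)$ is symmetric. Feeding the two patterns at distance $N$ back into the decomposition yields the two missing vectors, and combined with $(N,N,N)$ (always present from a phase-$0$ factor) this shows $\abp_{\omega}(n)=3$ for all $n\ge 1$.
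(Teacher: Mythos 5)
Your proof is correct and follows essentially the same route as the paper's: you decompose each factor of $\omega$ along the block boundaries of the morphic image, count the attainable Parikh vectors according to $n$ and the starting phase modulo $3$ to get $\abp_{\omega}(n)\le 3$, and then realize all three vectors, with the only delicate case $n\equiv 0 \pmod 3$ handled by an aperiodicity lemma. Your claim that an aperiodic binary word contains both patterns $(0,1)$ and $(1,0)$ at every distance $d\ge 1$ is exactly the paper's Lemma~\ref{aper} (existence of factors of length $d+1$ of the form $0u1$ and $1v0$), proved by the same propagation-along-residue-classes argument.
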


In Section~\ref{sec:repet}, we investigate a surprising connection between Abelian complexity and avoidance of Abelian powers. By an \textit{Abelian} $k${\it-power} 
we mean a non-empty word of the  form $w=u_1u_2\cdots u_k$ where the words $u_i$ are pairwise Abelian equivalent.
In this case we  say $w$ has an \emph{Abelian period} equal to $\abs{u_1}$.
% $u_i\sim_{\mbox{ab}}u_j$ for all $1\leq i,j\leq k.$
%The common length of each $U_i$ is called an {\it Abelian period}.
Powers, or repetitions, occurring in an infinite word is a topic of great interest having applications to a broad range of areas (see, \textit{e.g.}, \cite{AB, AlDaQuZa, AlZa,Lothaire2005}). One stream of research dating back to the works of Thue \cite{Thue1906,Thue1912} is the study of patterns avoidable by infinite words (see, \textit{e.g.}, \cite{Lothaire1983book,Lothaire2002,RichommeSeebold2004IJFCS,RichommeWlazinski2007DAM,BerstelLauveReutenauerSaliola2008book}). 
In the Abelian context, F.M.~Dekking \cite{Dekking1979} showed that Abelian 4-powers are avoidable on a binary  alphabet and that Abelian cubes are avoidable on a $3$-letter alphabet. V.~Ker\"anen \cite{Keranen1992ICALP} proved that Abelian squares are avoidable on four letters. 
We prove the following general result relating Abelian complexity and Abelian repetitions.

\begin{thm}
Let $\omega$ be an infinite word on a finite alphabet %$A$ 
having bounded ab-complexity. 
Then $\omega$ contains an Abelian $k$-power for every positive integer $k.$
\end{thm}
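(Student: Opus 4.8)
The plan is to combine the balance characterization of bounded ab-complexity (the Lemma above) with van der Waerden's Theorem, applied to a coloring by prefix Parikh vectors read modulo a carefully chosen integer. First I would invoke that Lemma to replace the hypothesis ``bounded ab-complexity'' by the equivalent statement that $\omega$ is $B$-balanced for some positive integer $B$. Writing $d=\card A$, I introduce for each $n\in\nats$ the prefix Parikh vector $P(n)=\Psi(\omega_0\omega_1\cdots\omega_{n-1})\in\ints^d$ (with $P(0)$ the zero vector), so that the Parikh vector of the factor occupying positions $[i,j)$ is exactly $P(j)-P(i)$. An Abelian $k$-power of period $t$ starting at position $p$ is then precisely the condition that the $k$ consecutive blocks $\omega_{p+jt}\cdots\omega_{p+(j+1)t-1}$, for $0\le j\le k-1$, share a common Parikh vector.

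The crucial idea is the design of the coloring. Fix $k$. I would color each $n\in\nats$ by the residue of $P(n)$ modulo $B+1$, taken coordinate-wise; this is a finite coloring using at most $(B+1)^d$ colors. By van der Waerden's Theorem there is a monochromatic arithmetic progression $p,\,p+t,\,p+2t,\,\ldots,\,p+kt$ of length $k+1$, with common difference $t\ge 1$. For $0\le j\le k-1$ put $v_j=P(p+(j+1)t)-P(p+jt)$, the Parikh vector of the $j$-th length-$t$ block. Monochromaticity forces $P(p+(j+1)t)\equiv P(p+jt)\pmod{B+1}$, so every coordinate of every $v_j$ is divisible by $B+1$. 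On the other hand, the blocks all have the same length $t$, so the $B$-balance of $\omega$ yields $\abs{(v_j)_a-(v_{j'})_a}\le B$ for all $j,j'$ and all letters $a\in A$. An integer that is simultaneously a multiple of $B+1$ and bounded in absolute value by $B$ must be zero, whence $v_0=v_1=\cdots=v_{k-1}$. Thus the $k$ consecutive blocks are pairwise Abelian equivalent and $\omega_p\omega_{p+1}\cdots\omega_{p+kt-1}$ is the desired Abelian $k$-power.

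The only genuinely delicate point is engineering the coloring so that van der Waerden's monochromatic progression produces \emph{consecutive}, rather than merely equally spaced, Abelian-equivalent blocks. The trick is that passing to prefix Parikh vectors reduced modulo $B+1$ couples the progression's common difference to the block length, while the modulus $B+1$ is chosen just large enough that balance collapses the relation ``congruent modulo $B+1$'' into genuine equality. I expect the remaining steps --- verifying the balance bound and the final arithmetic collapse --- to be routine. I would also note that the argument uses neither minimality nor recurrence of $\omega$, only bounded ab-complexity, so it establishes the statement in the full generality in which it is phrased.
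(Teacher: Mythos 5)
Your proof is correct, and it follows the same master plan as the paper's: finitely color $\nats$ using prefix Parikh data reduced modulo a fixed modulus, apply van der Waerden's theorem to obtain a monochromatic progression $p, p+t, \ldots, p+kt$, and then use balancedness to upgrade the congruences between the Parikh vectors of the $k$ consecutive length-$t$ blocks into equalities. Where you genuinely differ is in how the coloring is engineered, and your version is leaner. The paper does not reduce the prefix vector coordinate-wise; it first proves an auxiliary lemma (Lemma~\ref{congo}) constructing weights $\alpha_1,\ldots,\alpha_r$ and a modulus $N$ such that any relation $\sum_{i=1}^r c_i\alpha_i\equiv 0\pmod{N}$ with $|c_i|\leq M$ forces $c_1=\cdots=c_r=0$, and then identifies each letter with its weight and colors $t$ by the single scalar $\sum\omega_{[1,t]} \bmod N$. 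Your coordinate-wise reduction modulo $B+1$ achieves exactly what that scalar encoding was designed for, with no auxiliary lemma: each coordinate of $v_j-v_{j'}$ is a multiple of $B+1$ (by monochromaticity) of absolute value at most $B$ (by balance), hence zero, so $v_0=\cdots=v_{k-1}$ follows directly. The trade-off is merely the number of colors, $(B+1)^d$ for you versus $N$ for the paper, which is immaterial since van der Waerden only requires finiteness; and both arguments, as you note, use neither minimality nor recurrence. The one step to spell out in a final write-up is the quantitative form of the lemma you invoke at the outset: if $\abp_{\omega}(n)\leq K$ for all $n$, then $\omega$ is $(K-1)$-balanced (Lemma~\ref{bounded=balanced}), which is what licenses the bound $\abs{(v_j)_a-(v_{j'})_a}\leq B$.
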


In view of Theorem~\ref{SturmianChar}, the previous theorem implies that Sturmian words admit occurrences of Abelian $k$-powers for all $k$.
In Section~\ref{SturmianSec} we show that actually these words possess a  much stronger property:
\begin{thm}
For every Sturmian word $\omega$ and every positive integer $k$, there exist two integers $\ell_1$ and $\ell_2$ such that 
each position in $\omega$ has an occurrence of an Abelian $k$-power  with Abelian period $\ell_1$ or $\ell_2$.
In particular, every Sturmian word begins in an Abelian k-power for all positive integers $k$.
\end{thm}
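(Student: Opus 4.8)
The plan is to exploit the quantitative structure of Sturmian words via their classical description in terms of the \emph{balance property} and \emph{continued fraction expansion} of the slope. Fix a Sturmian word $\omega$ of slope $\alpha$, so that for every length $\ell$, each factor of length $\ell$ contains either $\lfloor \ell\alpha\rfloor$ or $\lfloor \ell\alpha\rfloor+1$ occurrences of the letter $1$; by the previous theorem we already know $\abp_\omega(\ell)=2$ for all $\ell\geq 1$. The key reformulation is that a factor $u_1u_2\cdots u_k$ with each $|u_i|=\ell$ is an Abelian $k$-power precisely when all the blocks $u_i$ have the \emph{same} Parikh vector. Since there are only two possible Parikh vectors for length $\ell$, the obstruction to being an Abelian $k$-power is that consecutive blocks switch between the two classes. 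The strategy is therefore to choose the Abelian period $\ell$ so cleverly that the ``deviation'' accumulated over $k$ consecutive blocks is forced to be constant, regardless of starting position.

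First I would introduce the standard counting device: write $N_i=|\omega_i\omega_{i+1}\cdots\omega_{i+\ell-1}|_1$ for the number of $1$'s in the length-$\ell$ block starting at position $i$. The balance property gives $N_i\in\{m,m+1\}$ where $m=\lfloor\ell\alpha\rfloor$, and the Parikh vector of a block depends only on which of these two values $N_i$ takes. A block of $k$ consecutive length-$\ell$ factors starting at position $i$ is an Abelian $k$-power if and only if $N_i=N_{i+\ell}=N_{i+2\ell}=\cdots=N_{i+(k-1)\ell}$. Equivalently, using the three-distance theorem / the description of $N_i$ as $\lfloor(i+\ell)\alpha\rfloor-\lfloor i\alpha\rfloor$ up to a bounded error, this reduces to controlling when the fractional parts $\{i\alpha\}$ fall into a prescribed interval. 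The plan is to pick $\ell$ to be a denominator $q_n$ of a convergent $p_n/q_n$ of $\alpha$, because then $\ell\alpha$ is extremely close to an integer, the quantity $\{\ell\alpha\}=\{q_n\alpha\}=\|q_n\alpha\|$ is tiny, and the sequence $N_i,N_{i+\ell},N_{i+2\ell},\dots$ becomes almost constant.

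The heart of the argument is the following computation. With $\ell=q_n$, the increments $N_{i+(j+1)\ell}-N_{i+j\ell}$ are governed by a rigid rotation by $q_n\alpha$ on the circle, and since $\|q_n\alpha\|$ is small, over the $k-1$ steps the accumulated rotation $(k-1)\|q_n\alpha\|$ can be kept below $1$ by choosing $n$ large enough (depending on $k$). When the accumulated rotation is small, the counts $N_{i+j\ell}$ can change by at most one total across all $j$, so at most two distinct values occur among them for any starting position $i$; that is almost but not quite an Abelian $k$-power. To upgrade ``almost constant'' to ``exactly constant'' I would use the two available lengths: I expect that choosing the two candidate periods to be $\ell_1=q_n$ and $\ell_2=q_n+q_{n-1}$ (or a suitable pair of consecutive best approximation denominators with opposite signs of $q\alpha-p$) makes the rotation directions opposite, so that for every starting position $i$ at least one of the two choices yields a genuinely constant run $N_i=N_{i+\ell}=\cdots=N_{i+(k-1)\ell}$. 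This dichotomy — one period overshoots, the other undershoots — is exactly what produces the two integers $\ell_1,\ell_2$ in the statement and guarantees that \emph{every} position begins an Abelian $k$-power of one of the two periods.

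The main obstacle, and the step requiring the most care, is this final gluing: showing that the two sign choices exhaust all starting positions, i.e. that there is no position $i$ for which \emph{both} periods fail. I would handle it by a clean case analysis on the value of $\{i\alpha\}$ relative to the small interval of length $\|q_n\alpha\|$ that records the ``carry'' in the digit sum, using the fact that the two convergent denominators approximate $\alpha$ from opposite sides (one has $q_n\alpha-p_n>0$, the adjacent one $<0$). Concretely, if the forward rotation by $q_n\alpha$ would push $\{i\alpha\}$ across the critical threshold and spoil constancy, then the companion rotation moves in the opposite direction and stays on one side of the threshold for all $k-1$ steps, yielding constancy there. Making the threshold bookkeeping precise — and in particular verifying that $n$ can be chosen uniformly so that $(k-1)\max(\|q_n\alpha\|,\|q_{n}'\alpha\|)$ is small enough for both periods simultaneously — is the technical crux; once that inequality is secured the rest is routine. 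The ``in particular'' clause about $\omega$ beginning in an Abelian $k$-power is then immediate by applying the result at position $i=0$.
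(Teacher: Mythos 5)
Your proposal is correct and follows essentially the same route as the paper: both reduce an Abelian $k$-power to constancy of the $1$-counts $N_{i+j\ell}$ of consecutive blocks, view these counts as governed by the rotation $x \mapsto \{x+\ell\alpha\}$, take the period to be a continued-fraction denominator so that $\|\ell\alpha\|$ is tiny (smaller than roughly $\delta/k$), and use two denominators approximating $\alpha$ from opposite sides (the paper takes $\ell_1=q_n$ with $n$ even and $\ell_2=q_{n+1}$, matching your ``consecutive best approximation denominators with opposite signs of $q\alpha-p$'') together with a case analysis on $\{i\alpha\}$ to show that for every starting position at least one rotation direction avoids the critical wraparound. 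Your dichotomy resolving the ``gluing'' step is exactly the paper's Case 1 / Case 2 split, so the sketch fills in correctly.
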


\paragraph{Acknowledgements} 
The second author is partially supported by grant no. 8121419 from the Finnish Academy. The third author is partially supported by grant no. 090038011 from the Icelandic Research Fund.

\section{Generalities}\label{sec:generalities}

As explained in Section~\ref{overview}, the Abelian complexity $\abp_\bw$ of a word $\bw$ is the function which counts the number of pairwise non Abelian equivalent factors of $\omega$ for each length $n$. In other words, for all $n \geq 0$, $\abp_\bw(n)$ is the cardinality of the set of Parikh vectors of factors of length $n$ of $\bw$.

Let $a, b$ be two letters in $A = \{0, \ldots, p-1\}$ and let $u$ be a word over $A$. If $a = b$, $\Psi(au) = \Psi(ub)$. When $a \neq b$, $\Psi(au) - \Psi(ub)$ is the vector whose all entries are 0 except its $(a+1)$-th entry with value $+1$ and its $(b+1)$-th entry with value $-1$. This shows how Parikh vectors evolve when considering two successive factors of same length of a word $\bw$. As an immediate consequence, we deduce the following fact that will be used very often implicitly.

\begin{lemma}\label{F:basic}
If an infinite word $\bw$ has two factors $u$ and $v$ of same length $n$ for which the $i$th entry of the Parikh vector are p and p+c respectively for some $p$ and $c$ then for all $\ell = 0, \ldots, c$, there exist factors $u_\ell$ of $\bw$ whose $i$th entry is $p+\ell$.
\end{lemma}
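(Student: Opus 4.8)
The plan is to exploit the fact that two successive factors of the same length $n$ differ by a single symbol, which forces their Parikh vectors to change by a controlled amount. Concretely, suppose $u$ and $v$ are factors of $\bw$ of length $n$ with $i$th Parikh entries $p$ and $p+c$ respectively; without loss of generality $c \geq 0$. I would consider the set of all length-$n$ factors of $\bw$ arranged according to their starting position in $\bw$, namely the sequence of windows $w^{(j)} = \bw_j \bw_{j+1} \cdots \bw_{j+n-1}$ for $j = 0, 1, 2, \ldots$. As $j$ increases by $1$, we pass from $w^{(j)}$ to $w^{(j+1)}$ by deleting the leading letter $\bw_j$ and appending the trailing letter $\bw_{j+n}$; by the computation recorded just before the statement, the $i$th entry of the Parikh vector either stays the same or changes by exactly $\pm 1$ at each such step.

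First I would record that both $u$ and $v$, being factors, occur at some starting positions, say $u = w^{(s)}$ and $v = w^{(t)}$, so the $i$th Parikh entry equals $p$ at position $s$ and $p+c$ at position $t$. Next, I would define $f(j)$ to be the $i$th entry of $\Psi(w^{(j)})$ and observe that $f$ is an integer-valued function on the indices satisfying $\abs{f(j+1) - f(j)} \leq 1$ for every $j$. This is the crucial structural input: $f$ changes by at most one unit per step. Then, for each target value $p + \ell$ with $0 \leq \ell \leq c$, I would invoke the discrete intermediate value property: since $f$ attains the value $p$ (at $s$) and the value $p+c$ (at $t$), and moves in unit increments between consecutive indices, it must attain every integer value strictly between $p$ and $p+c$ at some index along the path from $s$ to $t$. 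The factor $w^{(\ell')}$ realizing $f(\ell') = p + \ell$ is the desired factor $u_\ell$.

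I do not anticipate a serious obstacle here, as the statement is essentially the discrete intermediate value theorem applied to a function with unit step size, and the unit-step property is exactly what the preceding paragraph establishes. The one point requiring a little care is that the indices $s$ and $t$ need not be adjacent and the function $f$ may oscillate between them, but this does not matter: whichever of $s < t$ or $t < s$ holds, traversing the indices in increasing order from $\min(s,t)$ to $\max(s,t)$ gives a finite sequence of integers starting at one endpoint value and ending at the other, with consecutive terms differing by at most $1$, so every intermediate integer value is hit. I would also note explicitly that each window $w^{(j)}$ is genuinely a factor of length $n$ of $\bw$, so the realized value corresponds to an actual factor $u_\ell$ as claimed, completing the argument.
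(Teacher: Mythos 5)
Your argument is correct and is precisely the one the paper intends: the lemma is stated as an immediate consequence of the preceding paragraph on how Parikh vectors of two successive length-$n$ windows differ, which is exactly your unit-step sliding-window function $f(j)$ combined with the discrete intermediate value property. You have simply written out in full what the paper leaves implicit, so there is nothing to add or correct.
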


With the notation of this fact, we can see that $\abp_\omega(n) \geq c+1$. This implies:%It follows the following important relation.

\begin{lemma}\label{bounded=balanced}
For a word $\bw \in A^\nats \cup A^\ints$, the function $\abp_\bw$ 
is bounded if and only if $\bw$ is $C$-balanced for some positive integer $C$.
\end{lemma}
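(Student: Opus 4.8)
The plan is to prove the two implications of the equivalence separately, in both cases leaning on the observation recorded immediately after Lemma~\ref{F:basic}: if two factors of $\bw$ of the same length $n$ have $i$th Parikh entries differing by $c$, then $\abp_\bw(n)\geq c+1$. Throughout I will exploit that both the balance condition and the conclusion of Lemma~\ref{F:basic} are statements about a single coordinate of the Parikh vector at a time. The argument applies verbatim to $\bw\in A^\nats$ and to $\bw\in A^\ints$, since only the notion of factor of a fixed length is used.

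For the direction ``$C$-balanced $\Rightarrow$ $\abp_\bw$ bounded'' I would use a direct counting bound. Suppose $\bw$ is $C$-balanced and fix a length $n$. For each letter $a\in A$, the quantity $|U|_a$ taken over all factors $U$ of length $n$ ranges over a set of integers of diameter at most $C$, by the balance condition applied to $a$; hence it attains at most $C+1$ distinct values. A Parikh vector of a length-$n$ factor is determined by its $\abs{A}$ coordinates, so the number of distinct such Parikh vectors is at most $(C+1)^{\abs{A}}$. This bound is independent of $n$, whence $\abp_\bw(n)\leq (C+1)^{\abs{A}}$ for all $n$ and $\abp_\bw$ is bounded.

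For the converse ``$\abp_\bw$ bounded $\Rightarrow$ $C$-balanced'', suppose $\abp_\bw(n)\leq B$ for all $n$, and I would bound the spread of each coordinate uniformly. Fix $n$ and a letter $a$, and let $m_a$ and $M_a$ be the minimum and maximum of $|U|_a$ over factors $U$ of length $n$, attained by factors $u$ and $v$ respectively. Applying the consequence of Lemma~\ref{F:basic} to $u$ and $v$ (with $p=m_a$ and $c=M_a-m_a$) gives $\abp_\bw(n)\geq M_a-m_a+1$, so $M_a-m_a\leq B-1$. Since any two length-$n$ factors $U,V$ satisfy $\bigl||U|_a-|V|_a\bigr|\leq M_a-m_a$, we obtain $\bigl||U|_a-|V|_a\bigr|\leq B-1$ for every letter $a$ and every pair of equal-length factors; thus $\bw$ is $B$-balanced.

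The proof is essentially bookkeeping once Lemma~\ref{F:basic} is available, so there is no serious obstacle. The two points that require care are: keeping the estimates per-coordinate (both the balance hypothesis and the $c+1$ lower bound concern one entry of the Parikh vector at a time), and ensuring that the bound $M_a-m_a\leq B-1$ holds uniformly over all lengths $n$, so that the resulting balance constant does not depend on $n$. Choosing the constant to be $C=B$ rather than $C=B-1$ keeps $C$ a positive integer and harmlessly absorbs the degenerate case $B=1$ (where every length yields a single Parikh vector).
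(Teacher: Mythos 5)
Your proof is correct and follows essentially the same argument as the paper: the $C$-balanced direction via the $(C+1)^{\card(A)}$ counting bound on Parikh vectors, and the converse via the observation following Lemma~\ref{F:basic} that a coordinate spread of $c$ forces $\abp_\bw(n)\geq c+1$. Your treatment is simply more explicit than the paper's two-line proof, and the remark about choosing $C=B$ to keep the balance constant a positive integer is a harmless refinement.
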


\begin{proof}
If $\abp_\bw$ is bounded by $K$, then $\bw$ is $(K-1)$-balanced. If $\bw$ is $C$-balanced, then entries of Parikh vectors of factors of $\bw$ can take at most $C+1$ values, so that $\abp_\bw(n) \leq (C+1)^{\card(A)}$.
\end{proof}

%\subsection{Extremal Abelian Complexities}

A natural question concerns the possible extremal values of the Abelian complexity. As shown by next result, minimal values are reached by periodic words.% (let us recall that a word $\bw$ is periodic of period $p$ if $\bw_i = \bw_{i+p}$ for all $i \geq 0$.

\begin{lemma}[E.M. Coven and G.A. Hedlund, {\cite[Remark 4.07]{CovHed}}]
\label{L:aperiodic}
Let $\omega\in A^\nats \cup A^{\ints}$
be a right infinite or a bi-infinite word. Then  
$\omega$ is periodic  of period $p$ if and only if 
$\abp_{\omega}(p)=1$. 
\end{lemma}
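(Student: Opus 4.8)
The plan is to prove the equivalence $\abp_\omega(p)=1 \iff \omega$ is periodic of period $p$, and I would treat the two directions separately since one is nearly trivial and the other carries all the content.

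First, the forward-easy direction: if $\omega$ is periodic of period $p$, then for every index $i$ the factor $\omega_i\omega_{i+1}\cdots\omega_{i+p-1}$ is a cyclic rearrangement of the one period block $\omega_0\omega_1\cdots\omega_{p-1}$, because $\omega_{j+p}=\omega_j$ for all $j$. Any two such length-$p$ windows therefore contain exactly the same letters with the same multiplicities, i.e.\ they are all Abelian equivalent and share a single Parikh vector. Hence $\Psi_\omega(p)$ is a singleton and $\abp_\omega(p)=1$. (For a bi-infinite word the same argument applies using all $i\in\ints$.)

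The harder and more interesting direction is the converse: assume $\abp_\omega(p)=1$ and deduce periodicity of period $p$. The hypothesis says that every length-$p$ factor of $\omega$ has the \emph{same} Parikh vector. I would compare two consecutive windows $u=\omega_i\cdots\omega_{i+p-1}$ and $v=\omega_{i+1}\cdots\omega_{i+p}$. Using the observation recorded just before Lemma~\ref{F:basic} — that passing from one window to the next shifts the Parikh vector by the difference $\Psi(a u') - \Psi(u' b)$ where $a=\omega_i$ is the dropped letter and $b=\omega_{i+p}$ is the gained letter — equality of the two Parikh vectors forces $\Psi$ to be unchanged, which (by the case analysis in the paragraph before Lemma~\ref{F:basic}) is possible only when $a=b$, that is $\omega_i=\omega_{i+p}$. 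Since this holds for every $i$, we get $\omega_{i+p}=\omega_i$ for all $i$, which is exactly periodicity of period $p$.

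The main obstacle — really the only subtlety — is making the step ``equal Parikh vectors of consecutive windows implies the dropped and gained letters coincide'' airtight, since a priori one might fear a more complicated rearrangement. But this is handled cleanly by the vector-difference computation already supplied in the excerpt: the difference of the two Parikh vectors is supported on at most the two coordinates indexed by $a$ and $b$, with entries $+1$ and $-1$, so it vanishes precisely when $a=b$. I would also remark that $p$ being a period does not require it to be the \emph{minimal} period, so no extra work is needed to identify the smallest period; the statement only asserts that $p$ is \emph{a} period, matching the value of $n$ at which $\abp_\omega$ first attains $1$.
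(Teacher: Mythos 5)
Your proof is correct, but it takes a genuinely different route from the paper's. The paper treats the ``only if'' direction as immediate (as you do, via cyclic rotations of the period block) and proves the converse by contrapositive using right special factors: a non-periodic word contains arbitrarily long right special factors, and a right special factor of length $p-1$ yields two factors of length $p$ differing only in their last letter, hence two distinct Parikh vectors and $\abp_\omega(p)\geq 2$. Your argument is instead direct: from $\abp_\omega(p)=1$ you slide the length-$p$ window one position and use the vector-difference observation $\Psi(au')-\Psi(u'b)$ to force $\omega_i=\omega_{i+p}$ for every $i$. The comparison actually favors you on completeness: read literally, the paper's observation only shows that $\abp_\omega(p)=1$ forces $\omega$ to be periodic of \emph{some} period, and an extra step is needed to conclude that $p$ itself is a period --- if $\omega$ were periodic of minimal period $q$ with $p$ not a multiple of $q$, it has only finitely many right special factors, so the quoted observation is silent about that case, whereas your window-sliding argument rules it out in one stroke and works uniformly for right-infinite and bi-infinite words. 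What the paper's route buys is brevity and a connection to the classical special-factor machinery of Morse--Hedlund and Coven--Hedlund, to whom the lemma is attributed in any case.
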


%\framed{Next paragraph has been introduced essentially in order to define left/right/bispecial factors}

Let us recall that a word $u$ is a \textit{right special factor} of an infinite word $\omega$ if there exist distinct letters $\alpha$ and $\beta$ such that $u\alpha$ and $u\beta$ are both factors of $\omega$. 

\begin{proof}[Proof of Lemma~\ref{L:aperiodic}]
The ``only if'' part  is immediate. The other direction can be deduced from the observation that a non-periodic word $\omega$ must contain arbitrarily long right special factors implying $\abp_\omega(n) \geq 2$ for all $n \geq 1$.
\end{proof}

\medskip

Concerning the maximal Abelian complexity, it is clear that it is reached by any infinite word containing all finite words as factors, as for instance the Champernowne word. Let us denote $\abp_{\rm max}$ the Abelian complexity of such a word. Since, for any word $u$ of length $n$ over a $k$-letter alphabet, $\Psi(u)$ is a $k$-tuple $(i_1, \ldots, i_k)$ with $n = i_1 + i_2 + \ldots + i_k$, $\abp_{\rm max}$ is the maximum number of ways of writing
 $n$ as the sum of $k$ nonnegative integers. This well-known number (see, \textit{e.g.}, \cite{WeissteinWeb}) is called the number of compositions of $n$ into $k$ parts and and its value is given by the binomial coefficient 
 $\binom{n+k-1}{k-1}$. 
 This can be summarized:
\begin{theorem}
For all infinite words $\bw$ over a $k$-letter alphabet, and for all $n \geq 0$,
\[
\abp_\bw(n) \leq \abp_{\rm max}(n) = \binom{n+k-1}{k-1}.
\]
In particular, the ab-complexity is in $O(n^{k-1})$. %<---- changed the bounded from O(n^k)
%Let $\bw$ be any infinite word over a $k$-letter alphabet $A_k$ having subword complexity $k^n$ (as for instance the Champernowne word). 
%The Abelian complexity of $w$ is
%$\left(\begin{tabular}{c}n+k-1\\k-1\end{tabular}\right).$
%Moreover for any other word $\bx$ over $A_k$, $p_{ab, x}(n) \leq p_{ab,w}(n)$.
\end{theorem}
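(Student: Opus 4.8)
The plan is to reduce the inequality to the classical problem of counting compositions, exactly as foreshadowed in the paragraph preceding the statement. First I would note that for any factor $u$ of $\bw$ of length $n$, written over $A = \{0, 1, \ldots, k-1\}$, the Parikh vector $\Psi(u) = (|u|_0, |u|_1, \ldots, |u|_{k-1})$ consists of nonnegative integers satisfying $|u|_0 + |u|_1 + \cdots + |u|_{k-1} = |u| = n$. Hence the set $\Psi_\bw(n)$ of Parikh vectors of length-$n$ factors is contained in the finite set $S_{n,k}$ of all $k$-tuples of nonnegative integers summing to $n$, and consequently
\[
\abp_\bw(n) = \card(\Psi_\bw(n)) \le \card(S_{n,k}).
\]
This bound holds for every $\bw$, so it suffices to evaluate $\card(S_{n,k})$.

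The second step is to compute $\card(S_{n,k})$ by a standard stars-and-bars argument: a tuple $(i_1, \ldots, i_k)$ with $i_1 + \cdots + i_k = n$ corresponds bijectively to an arrangement of $n$ identical objects and $k-1$ separating bars, of which there are $\binom{n + k - 1}{k-1}$. This yields $\abp_\bw(n) \le \binom{n+k-1}{k-1}$ for all $\bw$. To see that this value is attained, and hence that $\abp_{\rm max}(n) = \binom{n+k-1}{k-1}$, I would exhibit a single word whose factors realize every composition: the $k$-letter Champernowne word, obtained by concatenating the base-$k$ representations of the consecutive integers, contains every finite word over $A$ as a factor, so that $\Psi_\bw(n) = S_{n,k}$ and equality holds.

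Finally, for the asymptotic statement I would expand
\[
\binom{n+k-1}{k-1} = \frac{(n+k-1)(n+k-2)\cdots(n+1)}{(k-1)!},
\]
which is a polynomial in $n$ of degree $k-1$ (with leading coefficient $1/(k-1)!$), whence $\abp_\bw(n) \in O(n^{k-1})$. I do not anticipate a genuine obstacle here: the only substantive ingredient is the compositions count, and the containment $\Psi_\bw(n) \subseteq S_{n,k}$ is immediate from the definition of the Parikh vector. The single point deserving a word of care is the claim that the maximum is actually achieved, which is settled by pointing to a word (the Champernowne word) that contains all finite words as factors.
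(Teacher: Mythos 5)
Your proposal is correct and follows essentially the same route as the paper: bound $\Psi_\bw(n)$ by the set of $k$-tuples of nonnegative integers summing to $n$, count that set as the number of compositions of $n$ into $k$ parts, namely $\binom{n+k-1}{k-1}$, and note the bound is attained by a word (the Champernowne word) containing all finite words as factors. The only difference is cosmetic: you prove the compositions count via stars-and-bars and spell out the polynomial expansion for the $O(n^{k-1})$ claim, whereas the paper simply cites these as well known.
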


%\begin{proof}
%Let $\bx$ be any infinite word. Let $u$ be any factor of $\bx$: $\psi(u)$ is a $k$-tuple $(i_1, i_2, \ldots, i_k)$ with $n = i_1 + i_2 + \ldots + i_k$. Hence $p_{ab, x}(n)$ is bounded by the maximum number of way to written $n$ has the sum of $k$ nonnegative integers. This well-known number is called the number of compositions of $n$ into $k$ parts and values $\left(\begin{tabular}{l}n+k-1\\k-1\end{tabular}\right).$
%Moreover, for any infinite word $\bw$ having complexity $k^n$ over $\{a_1, a_2, \ldots, a_k\}$, given any $k$-tuple $(i_1, i_2, \ldots, i_k)$ with $n = i_1 + i_2 + \ldots + i_k$, the word $u = a_1^{i_1}a_2^{i_2}\cdots a_k^{i_k}$ has Parikh vector $\psi(u) = (i_1, i_2, \ldots, i_k)$. Hence for all $n \geq 0$, $p_{ab,w}(n) = \left(\begin{tabular}{l}n+k-1\\k-1\end{tabular}\right)$ and $p_{ab, x}(n) \leq p_{ab,w}(n)$.
%\end{proof}

%The converse of the previous lemma is not true. For instance the word $\prod_{i \geq 0} 0^{2^i}1^{2^i} = 0100110000111100000001111111\ldots$ has the maximal Abelian complexity but its subword complexity is linear (I don't know if the exact complexity is known).

%We could also note that there exist some infinite words having a exponential subword complexity and a bounded Abelian complexity (this was also observed by P. Arnoux in one mail transmitted by Luca when writting: l'équilibre entraîne des conséquences fortes, il n'en est  pas de même du 2-équilibre qui est par exemple tout à fait compatible  
%avec une entropie positive (prendre une suite de Champernowne sur a  et b, et remplacer a par 01 et b par 10)).

In Section~\ref{overview} we provided an example of an infinite word having an exponential subword complexity but a linear Abelian complexity. Here follows an example of a binary infinite word having maximal Abelian complexity but a linear subword complexity. Let $f$ and $g$ be the morphisms defined by $f(a) = abc$, $f(b) = bbb$, $f(c) = ccc$, $g(a) = 0 = g(c)$ and $g(b) = 1$. The image $\bw$ by $g$ of the fixed point of $f$ on $a$ is the word $ 0\prod_{i \geq 0} 1^{3^i}0^{3^i}$. That $\abp_\bw = \abp_{\rm max}$ is immediate. Since $w$ is an automatic sequence, it has  linear subword complexity (see Theorems 6.3.2 and 10.3.1 in \cite{AlloucheShallit2003book}).

\section{\label{subsec:abcompTM}Abelian Complexity of the Thue-Morse Word}

Let us recall that the celebrated Thue-Morse word is the fixed point $\mathbf{TM}_0$ beginning in $0$ of the Thue-Morse morphism $\mu$ defined by $\mu(0) = 01$ and $\mu(1) = 10.$  Next theorem gives its Abelian complexity:

\begin{theorem} \label{T:Thue-Morse ab-complexity} 
$\abp_{\mathbf{TM}_0}(n)=2$ for $n$ odd and $\abp_{\mathbf{TM}_0}(n)=3$ for $n \neq 0$ even.
\end{theorem}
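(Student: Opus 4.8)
The plan is to pass to the $\pm 1$ coding of $\mathbf{TM}_0$ and reduce everything to the partial sums of the Thue--Morse sequence. Since the alphabet is binary, the Parikh vector of a factor $u$ of length $n$ is determined by the single number $|u|_0-|u|_1$, so $\abp_{\mathbf{TM}_0}(n)$ equals the number of distinct values of $|u|_0-|u|_1$ over factors $u$ of length $n$. I encode the letters by $t_j=+1$ if $(\mathbf{TM}_0)_j=0$ and $t_j=-1$ otherwise, and set $P(m)=\sum_{j=0}^{m-1}t_j$ with $P(0)=0$. Then the factor of length $n$ starting at position $i$ satisfies $|u|_0-|u|_1=S(i,n):=P(i+n)-P(i)$, so $\abp_{\mathbf{TM}_0}(n)=\mbox{Card}\{\,S(i,n):i\geq 0\,\}$. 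Note $S(i,n)\equiv n\pmod 2$, being a sum of $n$ terms $\pm 1$.

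The crux is the explicit evaluation of $P$. From $\mu(0)=01$ and $\mu(1)=10$ one reads off $t_{2j}=t_j$ and $t_{2j+1}=-t_j$, so consecutive pairs cancel: $P(2m)=\sum_{k=0}^{m-1}(t_{2k}+t_{2k+1})=0$ for every $m$, while $P(2m+1)=P(2m)+t_{2m}=t_m\in\{-1,+1\}$. Thus $P$ vanishes on even arguments and equals $\pm 1$ on odd arguments, and this identity drives both cases.

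For $n$ odd, $i$ and $i+n$ have opposite parities, so exactly one of $P(i),P(i+n)$ is $0$ and the other is $\pm 1$; hence $S(i,n)\in\{-1,+1\}$. Both values occur: letting $i$ run through the even integers, $S(i,n)=t_{(i+n-1)/2}$ ranges over all sufficiently late values of $t$, which is not eventually constant. Therefore $\abp_{\mathbf{TM}_0}(n)=2$. For $n=2n'$ with $n'\geq 1$, the indices $i$ and $i+n$ share a parity: even starting positions give $S=0$, and odd starting positions give $S=t_{k+n'}-t_k\in\{-2,0,2\}$ with $k=(i-1)/2$. So the value set lies in $\{-2,0,2\}$ and contains $0$.

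It remains to show, in the even case, that both $+2$ and $-2$ occur; this is the main obstacle. If $t_{k+n'}=t_k$ held for all $k$, then $t$ would be periodic of period $n'$, contradicting the aperiodicity of $\mathbf{TM}_0$; hence some nonzero value, say $+2$, is attained. To get the opposite sign I use that the Thue--Morse minimal subshift is invariant under the letter exchange $0\leftrightarrow 1$: its two fixed points $\mathbf{TM}_0$ and $\mathbf{TM}_1=\overline{\mathbf{TM}_0}$ are complements of one another and share the same set of factors. Consequently the factor set is closed under complementation, so the value set of $S(\cdot,n)$ is symmetric about $0$, and $-2$ occurs as well. All three values being attained, $\abp_{\mathbf{TM}_0}(n)=3$ for every even $n\geq 2$. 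I emphasize that this symmetry step is genuinely needed: the interval property of Lemma~\ref{F:basic} alone is compatible with the set $\{0,+2\}$, so aperiodicity must be supplemented by the exchange involution to rule out a value set of size $2$.
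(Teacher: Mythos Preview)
Your argument is correct. The partial-sum identity $P(2m)=0$, $P(2m+1)=t_m$ cleanly pins down the value set of $S(\cdot,n)$, and your two-step finish in the even case---aperiodicity to get one nonzero value, then closure of the Thue--Morse factor set under the exchange $0\leftrightarrow 1$ to get the other---is a valid way around the obstacle you correctly identify.

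The paper takes a different route. It deduces Theorem~\ref{T:Thue-Morse ab-complexity} from the more general Theorem~\ref{T:caracAbCompTM}, whose ``if'' direction is what is relevant here: any factor of $\mu(\omega')$ of length $2k+1$ has the form $\mu(u)a$ or $a\mu(u)$, forcing Parikh vector $(k{+}1,k)$ or $(k,k{+}1)$; any factor of length $2k$ has the form $\mu(u)$ or $a\mu(v)b$, forcing Parikh vector in $\{(k,k),(k{-}1,k{+}1),(k{+}1,k{-}1)\}$. To show that both extremal vectors actually occur, the paper invokes Lemma~\ref{aper} (every aperiodic binary word contains, for each length, factors of the forms $0u1$ and $1v0$), applied to $\omega'$. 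So where you use the complementation symmetry particular to $\mathbf{TM}_0$, the paper uses a general lemma about aperiodic binary words; the payoff is that the paper's argument simultaneously handles every $\mu$-image of an aperiodic word, not just the Thue--Morse fixed point. Your partial-sum computation could in fact be adapted to that generality (since $P(2m)=0$ and $P(2m+1)=t'_m$ hold for $\mu(\omega')$ with $t'$ the $\pm1$ coding of $\omega'$), but you would then need to replace the symmetry step by something like Lemma~\ref{aper}.
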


This result is a direct corollary of Theorem~\ref{T:caracAbCompTM} below which shows that this complexity is only due to the fact that $\mathbf{TM}_0$ is the image of a word by $\mu$. Moreover Theorem~\ref{T:caracAbCompTM} characterizes the class of all words having same Abelian complexity as the Thue-Morse word. Corollary~\ref{C:More on Thue-Morse} shows that the subshift generated by the Thue-Morse word can be distinguished in this class by its subword complexity.

We need a preliminary result:

\begin{lemma}\label{aper} Let $\bw =\bw_0\bw_1\bw_2\ldots\in \{0,1\}^\nats$ be an aperiodic infinite word. Then for every $k\geq 2,$ there exist factors $U$ and $V$ of $\bw$ of length $k,$ with $U=0u1$ and $V=1v0$ for some words $u$ and $v$ (possibly empty).
\end{lemma}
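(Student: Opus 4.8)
The plan is to fix $k\geq 2$ and establish the existence of each of the two factor types separately, arguing by contradiction and extracting periodicity from the finitely many residue classes modulo $k-1$.

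First I would suppose, towards a contradiction, that $\bw$ has no factor of length $k$ of the form $0u1$. Writing a generic length-$k$ factor as $\bw_i\bw_{i+1}\cdots\bw_{i+k-1}$, this assumption says exactly that for no index $i$ can one have simultaneously $\bw_i=0$ and $\bw_{i+k-1}=1$; equivalently,
\[
\bw_i=0 \implies \bw_{i+k-1}=0 \qquad \text{for all } i\geq 0.
\]
The key step is to read this implication along each arithmetic progression of common difference $k-1$. Fixing a residue $r\in\{0,1,\ldots,k-2\}$ and following the subsequence $\bw_r,\bw_{r+(k-1)},\bw_{r+2(k-1)},\ldots$, the implication says that within this progression a $0$ can never be succeeded by a $1$; hence the subsequence is either constantly $1$ or consists of finitely many $1$'s followed by all $0$'s. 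In either case it is eventually constant. Since there are only $k-1$ residue classes, there is a single index $N$ beyond which every class is constant, so $\bw_{n+(k-1)}=\bw_n$ for all $n\geq N$. This makes $\bw$ ultimately periodic of period $k-1$, contradicting aperiodicity. Thus a factor of the form $0u1$ must occur.

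The existence of a factor $1v0$ follows from the symmetric argument obtained by interchanging the roles of $0$ and $1$: assuming no such factor yields $\bw_i=1\implies \bw_{i+k-1}=1$, which again forces each progression modulo $k-1$ to be eventually constant and hence $\bw$ to be ultimately periodic. The only point demanding a little care is the passage from the pointwise implication to ultimate periodicity, namely the observation that the ``monotone'' behaviour along each of the finitely many progressions produces a common threshold $N$. Once this is in place the contradiction with aperiodicity is immediate, and I do not anticipate any genuine obstacle, since $k-1\geq 1$ guarantees the progressions are nontrivial and together cover every position of $\bw$.
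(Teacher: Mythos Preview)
Your argument is correct and follows essentially the same route as the paper: assume no factor of the form $0u1$, deduce the implication $\bw_i=0\Rightarrow\bw_{i+k-1}=0$, and conclude ultimate periodicity with period $k-1$, then argue symmetrically for $1v0$. The paper's proof is simply terser at the periodicity step, asserting directly that ``for $i$ sufficiently large $\bw_{i+k-1}=\bw_i$'', whereas you spell out the eventual constancy along each residue class modulo $k-1$ --- this is exactly the justification that step needs.
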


\begin{proof} 
Suppose for some $k\geq 2$, the aperiodic word $\bw$ does not contain a factor of length $k$ beginning in $0$ and ending in $1.$
This implies that for each $i\geq 0,$ if $\bw_i=0,$ then  $\bw_{i+k-1}=0.$ 
%Thus for each $i\geq 0,$ the sequence $(\omega'_{i+n(k-1)})_{n\geq 1}$ is either of the form $1^s0^\infty$ or $1^\infty.$ 
Thus for $i$ sufficiently large we have that $\bw_{i+k-1}=\bw_i,$ and hence $\bw$
is eventually periodic, a contradiction. Similarly, if $\bw$ does not contain a factor $V$ of length $k$ beginning in $1$ and ending in $0,$ it would follow that $\bw$ is eventually periodic.

\end{proof}

\begin{theorem}\label{T:caracAbCompTM} 
The Abelian complexity of an aperiodic word $\bw \in \{0,1\}^\nats$ is 
$$\left\{
\begin{tabular}{l}
$\abp_\bw(n) = 2$ for $n$ odd,\\
$\abp_\bw(n) = 3$ for $n \neq 0$ even,\\
\end{tabular}\right.$$
if and only if there exists a word $\bw'$ such that $\bw = \mu(\bw')$, $\bw = 0\mu(\bw')$ or $\bw = 1\mu(\bw')$.
\end{theorem}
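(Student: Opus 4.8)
The plan is to recode $\bw$ by a height function and translate both sides of the equivalence into statements about it. For $\bw=\bw_0\bw_1\cdots$ set $h(0)=0$ and $h(j)=\abs{\bw_0\cdots\bw_{j-1}}_0-\abs{\bw_0\cdots\bw_{j-1}}_1$, so the step from $h(j)$ to $h(j+1)$ is $+1$ when $\bw_j=0$ and $-1$ when $\bw_j=1$. A factor $u$ of length $n$ at position $j$ satisfies $\abs{u}_0-\abs{u}_1=h(j+n)-h(j)$, and since $\abs{u}_0+\abs{u}_1=n$ this one number determines $\Psi(u)$; hence
\[
\abp_\bw(n)=\card\set{h(j+n)-h(j)}{j\ge 0}.
\]
Two observations make the statement transparent. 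Since $\bw_j=\bw_{j+1}$ iff $h(j+2)-h(j)=\pm2$ and $\bw_j\neq\bw_{j+1}$ iff $h(j+2)=h(j)$, the function $h$ is constant on the even integers iff $\bw_j\neq\bw_{j+1}$ for all even $j$, i.e. iff $\bw=\mu(\bw')$; and $h$ is constant on the odd integers iff $\bw_j\neq\bw_{j+1}$ for all odd $j$, i.e. iff $\bw\in\{0\mu(\bw'),1\mu(\bw')\}$. Thus the right-hand side says exactly that \emph{$h$ is constant on one of the two residue classes modulo $2$}, equivalently that all repetitions $\bw_j=\bw_{j+1}$ share a common parity. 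Note also $\bw$ is aperiodic iff $\bw'$ is.

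For the ``if'' direction, by the symmetry of the two residue classes assume $h$ is constant on the even integers; then $h(j)=0$ for $j$ even and $h(j)\in\{-1,+1\}$ for $j$ odd, with $h(2\ell+1)$ equal to $+1$ or $-1$ according as $\bw'_\ell=0$ or $1$. If $n$ is odd, $j$ and $j+n$ have opposite parity, so $h(j+n)-h(j)\in\{-1,+1\}$ and $\abp_\bw(n)\le2$; Lemma~\ref{L:aperiodic} gives $\abp_\bw(n)=2$. If $n=2m\ge2$, then for even $j$ the increment is $0$ and for odd $j=2i+1$ it equals $h(2(i+m)+1)-h(2i+1)\in\{-2,0,2\}$, so $\abp_\bw(n)\le3$; and this last difference fails to be negative for all $i$ only if $\bw'_i=0\Rightarrow\bw'_{i+m}=0$ for all $i$, forcing $\bw'$ to be eventually periodic of period $m$, a contradiction, and symmetrically it cannot be nonnegative for all $i$. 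Hence all of $-2,0,2$ occur and $\abp_\bw(n)=3$. The case $\bw\in\{0\mu(\bw'),1\mu(\bw')\}$ is identical with the two parities exchanged.

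For the ``only if'' direction I argue the contrapositive: if $h$ is non-constant on \emph{both} residue classes, I exhibit an odd $n$ with $\abp_\bw(n)\ge3$. First, from $\abp_\bw(2)=3$ both $00$ and $11$ occur, while $\abp_\bw(3)=2$ together with Lemma~\ref{F:basic} forces the values of $\abs{u}_0$ over length-$3$ factors to be two consecutive integers; a short check shows these are incompatible with the presence of $000$ or $111$. So $\bw$ has no cube, hence all runs have length $1$ or $2$. Writing $E_i=h(2i)$ and $O_i=h(2i+1)$, each of the two sub-walks $E,O$ moves by $0$ or $\pm2$, and two consecutive nonzero steps of equal sign would create a cube; thus the nonzero steps of $E$ alternate in sign, likewise for $O$, and \emph{each of $E$ and $O$ takes at most two values}.

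Now suppose for contradiction both $E$ and $O$ are non-constant; after the harmless $0\leftrightarrow1$ normalisation, and using the link $\abs{O_i-E_i}=1$, we may take $E$ with values $\{0,2\}$ and $O$ with values $\{-1,1\}$ (the range $\{1,3\}$ for $O$ being symmetric). In the formula $\abp_\bw(2m+1)=\card(\set{O_{i+m}-E_i}{i\ge0}\cup\set{E_{i+m+1}-O_i}{i\ge0})$, the increment $O_{i+m}-E_i$ equals $-3$ precisely when $E_i=2$ and $O_{i+m}=-1$, and $+1$ precisely when $E_i=0$ and $O_{i+m}=1$. By Lemma~\ref{F:basic} the increment set at odd length $n=2m+1$ is an interval of odd integers, so once both $-3$ and $+1$ are realised the intermediate value $-1$ is forced and $\abp_\bw(n)\ge3$. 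Fixing an even-indexed repetition (a jump $E_a=0\to E_{a+1}=2$) and an odd-indexed repetition (a jump $O_b=-1\to O_{b+1}=1$), one chooses $m$ so that the offset aligns the descent of $O$ with the value $E_{a+1}=2$, realising $-3$. The main obstacle is the remaining coordination: one must secure the second, non-adjacent increment at this \emph{same} length $n$, which is not automatic from a single repetition and requires controlling the relative positions of repetitions of the two parities (passing, if necessary, to the nearest such repetitions and reading off the local factor structure). This alignment is the delicate combinatorial heart of the proof; everything else is bookkeeping of $h$ and its two parity sub-walks.
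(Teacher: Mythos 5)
Your reduction of the statement to the height function $h$ and its two parity sub-walks $E,O$ is a correct recoding, and your ``if'' direction is complete and sound: it is essentially the paper's own argument (factors of $\mu(\bw')$ of odd/even length, plus the eventual-periodicity argument of Lemma~\ref{aper}) rewritten in walk language. The problem is the ``only if'' direction: the step you yourself call ``the delicate combinatorial heart of the proof'' is exactly the step you do not carry out, and it is the entire mathematical content of that direction. Concretely, you must produce a \emph{single} odd length $n=2m+1$ at which two increments at distance at least $4$ (say $-3$ and $+1$) are \emph{both} realized. Aligning one repetition of each parity yields, for a suitable $m$, one factor of increment $-3$; but the companion value at that \emph{same} $m$ can a priori fail --- for instance when all $O$-repetitions precede all $E$-repetitions, or when the sets $\{i: E_i=2\}$ and $\{i:O_i=-1\}$ are arranged so that $E_i=0$ forces $O_{i+m}=-1$ --- and ruling out these configurations requires a further case analysis (each branch ending in an eventual-periodicity contradiction) that is nowhere supplied. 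In addition, your dismissal of the case $O\subseteq\{1,3\}$ as ``symmetric'' to $O\subseteq\{-1,1\}$ is not a mere relabeling: exchanging the letters $0$ and $1$ maps $E$-values $\{0,2\}$ to $\{0,-2\}$, so the two cases are not identified by the normalization you invoke.

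The paper closes this gap with an idea your proposal is missing, and which dissolves the coordination problem entirely: an induction on $k$, using Lemma~\ref{F:basic} together with the inclusion $\Psi_\bw(n+1)\subseteq \Psi_\bw(n)+\{(1,0),(0,1)\}$, which pins down the Parikh sets \emph{exactly}, namely $\Psi_\bw(2k+1)=\{(k+1,k),(k,k+1)\}$ and $\Psi_\bw(2k+2)=\{(k+1,k+1),(k,k+2),(k+2,k)\}$. Once the odd-length sets are known exactly (imbalance $\pm1$, not merely ``some two values''), one no longer needs two coordinated factors of the same length: a \emph{single} odd-length factor of imbalance $\pm3$ is already a contradiction. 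Such a factor falls out of exactly the configuration you consider: if two consecutive same-letter runs start at positions of different parities with no opposite run between them, the word between them alternates, producing a factor $11(01)^k011$ or $00(10)^k100$, which has odd length and imbalance $\pm3$. Forbidding these, plus aperiodicity (forcing infinitely many, alternating, occurrences of $00$ and $11$), gives the block decomposition into words of $1(01)^*00(10)^*1$ and hence $\bw\in\{\mu(\bw'),0\mu(\bw'),1\mu(\bw')\}$. If you wish to salvage your route, prove the exactness induction first; your parity-walk formalism then finishes the ``only if'' direction in a few lines.
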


\begin{proof} 
Assume first that the Abelian complexity of a word $\bw$ is $2$ for $n$ odd and $3$ for $n \neq 0$ even.
We first prove by induction that  for all $k \geq 0$
$$\left\{
\begin{tabular}{l}
$\Psi_\bw(2k+1) = \{ (k+1, k), (k, k+1)\}$ and\\
  $\Psi_\bw(2k+2) = \{ (k+1, k+1), (k, k+2), (k+2, k)\}$.
\end{tabular}\right.$$
The result is true for $k = 0$. 
Assume that $\Psi_\bw(2k+2)$ is as above. Since $\abp_\bw(2k+3) = 2$, by Lemma~\ref{F:basic}
there exist integers $p$ and $q$ such that 
$$\Psi_\bw(2k+3) = \{(p+1, q), (p, q+1)\}.$$ 
Moreover 
\begin{eqnarray*}
\Psi_\bw(2k+3) & \subseteq & \Psi_\bw(2k+2) + \{ (1, 0), (0,1) \}\\
&= & \{ (k+2, k+1), (k+1, k+2), (k, k+3), (k+3,k)\}.
\end{eqnarray*} 
Thus exactly one of the following three identities hold:
\begin{align*}
\Psi_\bw(2k+3) &= \{(k+2, k+1), (k+1, k+2)\}, \\
\Psi_\bw(2k+3) &= \{(k+2, k+1), (k+3, k)\},   \\
\Psi_\bw(2k+3) &= \{(k+1, k+2), (k, k+3)\}.
\end{align*}
The second identity is not possible, though, because $(k,k+2) \in \Psi_\bw(2k+2)$. Similarly we see that the third identity cannot hold, and therefore
the first one does.
%, implying that $p = q = k+1$, that is $\Psi_\bw(2k+3) = \{(k+2, k+1), (k+1, k+2)\}$. 
Now 
\begin{eqnarray*}
\Psi_\bw(2k+4) &\subseteq & \Psi_\bw(2k+3) + \{ (1, 0), (0,1) \}\\ 
&=& \{ (k+3, k+1), (k+2, k+2), (k+1, k+3)\}.
\end{eqnarray*} 
Since $\abp_\bw(2k+4) = 3$, the previous inclusion is an equality, and the inductive part is proved.

It follows from $\Psi_\bw(3) = \{ (1,2), (2,1)\}$ that 000 and 111 do not occur in $w$.
Assume that $\bw$ has one factor of the form $11(01)^k011$. This word has Parikh vector $(k+4, k+1)$ which is not possible from what precedes. Hence $\bw$ has no factor of the form $11(01)^k011$ and similarly no factor of the form $00(10)^k100$ (that is between two consecutive occurrences of $11$ there is an occurrence of $00$ and between two occurrences of $00$ there is an occurrence of $11$). Thus, since $\bw$ is aperiodic, both $00$ and $11$ must occur infinitely often, so that $\bw$ can be decomposed $pu_1 u_2 \ldots u_k \ldots$ with each $u_i$ in $1(01)^*00(10)^*1$ and $p$ a suffix of a word in $1(01)^*00(10)^*1$. This shows that $\bw = \mu(\bw')$, $\bw = 0\mu(\bw')$ or $\bw = 1\mu(\bw')$ for a word $\bw'$.

%%%

\medskip

Assume from now on that $\bw = \mu(\bw')$, $\bw = 0\mu(\bw')$ or $\bw = 1\mu(\bw')$ for a word $\bw'$.

Let $n=2k+1.$ Then every factor $U$ of $\bw$ of length $n$ is either of the form $\mu (u)a$ or $a\mu(u)$ for some factor $u$ of $\bw'$ of length $k$ and some $a\in \{0,1\}.$  It follows that \[\Psi_{\bw}(n)\subseteq \{(k+1,k), (k, k+1)\}.\]  Since $\bw$ is aperiodic, by Lemma~\ref{L:aperiodic} we have that  $\abp_{\bw}(n)\geq 2$  and hence
$\abp_{\bw}(n)=2.$

Next suppose $n=2k.$ Then every factor $U$ of length $n$ is either of the form
$\mu(u)$ or $a\mu(v)b$ for some  factors $u$ and $v$ of $\bw'$ of length $k$ and $k-1$ respectively, and for some $a,b\in \{0,1\}.$ It follows that
\[\Psi_{\bw}(n)\subseteq \{(k,k), (k-1,k+1),(k+1,k-1)\}.\]
It follows from Lemma~\ref{aper} that $\bw'$ contains factors
$u'$ and $v'$ of length $k-1$ with $u'$ preceded by $0$ and followed by $1$
and $v'$ preceded by $1$ and followed by $0.$ 
Thus $\bw$ contains the factors of length $n$ of the form $1\mu(u')1$ and $0\mu(v')0,$ whose
corresponding Parikh vectors are $ (k-1,k+1),$ and $(k+1,k-1)$ respectively.
Hence 
\[\Psi_{\bw}(n)= \{(k,k), (k-1,k+1),(k+1,k-1)\}\]
whence $\abp_{\bw}(n)=3.$

\end{proof}

\begin{remark}
The proof of the ``only if'' part of Theorem~\ref{T:caracAbCompTM} easily extends to ultimately periodic words as follows. 
If an ultimately periodic word $\omega$ has the same Abelian complexity as the Thue--Morse word, then 
 $\omega$ can be decomposed into $pu_1\ldots u_k (01)^\infty$ or $pu_1\ldots u_k (10)^\infty$ with words $p$ and $u_i$ as in the proof of Theorem~\ref{T:caracAbCompTM}. (By the notation $u^\infty$ we mean the infinite word $uuu \cdots$)
That is, $\omega$ is of the form $\mu(v b^{\infty})$ or 
$a\mu(v b^{\infty})$ with $a,b \in \{0,1\}$ and for some finite word $v$.

 The converse part of Theorem~\ref{T:caracAbCompTM}, however,  does not extend to ultimately periodic words. Indeed,  the word $(01)^\infty = \mu(0^\infty)$ is of the correct form, but it does not have the same Abelian complexity as the Thue-Morse word.
\end{remark}

%\noindent Let remember that an
\noindent We  end this section with:

\begin{corollary}\label{C:More on Thue-Morse} Let $\mathbf{TM}_0\in \{0,1\}^\nats$ denote  the Thue-Morse word beginning in $0,$ and let $\omega$ be a recurrent infinite word.
Then
\[\p_{\omega}(n)=\p_{\mathbf{TM}_0}(n)\,\,\, n\in \nats\]
and
\[\abp_{\omega}(n)=\abp_{\mathbf{TM}_0}(n)\,\,\, n\in \nats\]
if and only if  $\omega$ is in the subshift generated by $\mathbf{TM}_0.$

\end{corollary}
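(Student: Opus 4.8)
The plan is to prove Corollary~\ref{C:More on Thue-Morse} by combining Theorem~\ref{T:caracAbCompTM} with a subword-complexity argument, using recurrence to upgrade the structural conclusion into full membership in the subshift. The ``if'' direction is immediate: if $\omega$ lies in the subshift generated by $\mathbf{TM}_0$, then since the Thue--Morse subshift is minimal, $\omega$ has exactly the same set of factors of each length as $\mathbf{TM}_0$, so both $\p_\omega$ and $\abp_\omega$ agree with the corresponding functions of $\mathbf{TM}_0$. All the work is in the ``only if'' direction.

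For the converse, suppose $\omega$ is recurrent with $\p_\omega(n)=\p_{\mathbf{TM}_0}(n)$ and $\abp_\omega(n)=\abp_{\mathbf{TM}_0}(n)$ for all $n$. First I would observe that $\omega$ must be aperiodic: by Theorem~\ref{T:Thue-Morse ab-complexity} the Thue--Morse word has $\abp_{\mathbf{TM}_0}(p)\geq 2$ for every $p\geq 1$, so $\abp_\omega(p)\neq 1$ for all $p$, and Lemma~\ref{L:aperiodic} then forbids periodicity. Since $\omega$ is aperiodic and has the Thue--Morse Abelian complexity, Theorem~\ref{T:caracAbCompTM} applies and tells us that $\omega=\mu(\omega')$, $\omega=0\mu(\omega')$, or $\omega=1\mu(\omega')$ for some infinite word $\omega'\in\{0,1\}^\nats$. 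I would also note $\omega'$ is itself aperiodic, since a period of $\omega'$ would force a period of $\omega$.

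The next step is to use the subword complexity hypothesis to pin down $\omega'$. The key classical fact is that $\p_{\mathbf{TM}_0}$ has a known closed form, and in particular the factors of $\mathbf{TM}_0$ are exactly the factors of words in the image of $\mu$ (plus the short initial-letter corrections). I would argue that because $\omega=\mu(\omega')$ up to a leading letter and $\p_\omega=\p_{\mathbf{TM}_0}$, the word $\omega'$ must itself have the same subword complexity as $\mathbf{TM}_0$; equivalently, the factor set of $\omega'$ must coincide with that of $\mathbf{TM}_0$. The cleanest route is to exploit the fact that $\mu$ is injective and that every factor of $\omega$ of even length aligned at even position is the image under $\mu$ of a factor of $\omega'$, so a factor count of $\omega$ determines a factor count of $\omega'$; matching this against the Thue--Morse count forces $\omega'$ to have Thue--Morse subword complexity. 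Since the Thue--Morse subshift is the \emph{unique} minimal aperiodic subshift with that subword complexity that is closed under the desubstitution $\mu^{-1}$ (indeed $\mathbf{TM}_0$ is a fixed point of $\mu$), iterating this desubstitution shows that the factors of $\omega$ at every level agree with those of $\mathbf{TM}_0$.

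The main obstacle, and the step requiring the most care, is controlling the leading-letter ambiguity and confirming that the desubstitution closes up correctly: a priori $\omega'$ is merely aperiodic with the right complexity, and one must rule out that $\omega'$ generates a different subshift of the same complexity. Here recurrence does the decisive work. A recurrent word has the property that its set of factors equals the set of factors of the subshift it generates, so matching factor sets with $\mathbf{TM}_0$ at the top level is equivalent to generating the same minimal subshift; the recurrence hypothesis thus converts the structural decomposition of Theorem~\ref{T:caracAbCompTM} together with the subword-complexity coincidence into the statement that $\omega$ and $\mathbf{TM}_0$ have identical factor sets. Once the factor sets coincide and $\omega$ is recurrent, $\omega$ lies in the closure of the orbit of $\mathbf{TM}_0$, i.e.\ in the subshift generated by $\mathbf{TM}_0$, completing the proof. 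I would be careful to verify that neither complexity function alone suffices (which is precisely the point of the corollary, as the surrounding text emphasizes), so both hypotheses must be invoked: Abelian complexity to trigger Theorem~\ref{T:caracAbCompTM}, and subword complexity to eliminate the other words of the form $\mu(\omega')$ whose orbit closure differs from the Thue--Morse subshift.
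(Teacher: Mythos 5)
Your ``if'' direction and the first steps of your converse (aperiodicity via Lemma~\ref{L:aperiodic}, then Theorem~\ref{T:caracAbCompTM} to write $\omega=\mu(\omega')$, $0\mu(\omega')$ or $1\mu(\omega')$) are fine, but the step where subword complexity is supposed to finish the proof is a genuine gap. You assert that since $\p_\omega=\p_{\mathbf{TM}_0}$, the word $\omega'$ has Thue--Morse subword complexity and, ``equivalently,'' that its factor set coincides with that of $\mathbf{TM}_0$. Equality of complexity functions never implies equality of factor sets --- that is precisely the phenomenon this corollary is about, and it is why both hypotheses are needed. The uniqueness claim you invoke to bridge this (``the Thue--Morse subshift is the unique minimal aperiodic subshift with that subword complexity closed under desubstitution'') is unproven and is essentially the whole difficulty restated. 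A premise you lean on is also false: the factors of $\mathbf{TM}_0$ are \emph{not} exactly the factors of words in the image of $\mu$; for instance $01010$ is a factor of $\mu(000)$ but not of the (overlap-free) word $\mathbf{TM}_0$. Worse, the intended iteration cannot even get started at the second level: Theorem~\ref{T:caracAbCompTM} cannot be reapplied to $\omega'$, since nothing is known about $\abp_{\omega'}$ --- indeed, by the converse half of that very theorem, \emph{every} aperiodic $\omega'$ yields a word $\mu(\omega')$ with the Thue--Morse Abelian complexity, so the Abelian hypothesis on $\omega$ carries no information about $\omega'$ beyond aperiodicity. Finally, your appeal to recurrence (``matching factor sets at the top level is equivalent to generating the same minimal subshift'') is circular: establishing that the factor sets match is the crux, not a consequence.

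The paper's proof inverts the roles of the two hypotheses and is much shorter: it quotes the classification of Aberkane and Brlek --- every recurrent binary word whose subword complexity equals that of $\mathbf{TM}_0$ lies in the shift orbit closure of $\mathbf{TM}_0$ or of $\Delta(\mathbf{TM}_0)$, where $\Delta$ is the doubling morphism $0\mapsto 00$, $1\mapsto 11$ --- and then uses the Abelian hypothesis only as a tie-breaker: a word in the orbit closure of $\Delta(\mathbf{TM}_0)$ contains the four factors $111,110,100,000$, forcing $\abp_\omega(3)=4\neq 2=\abp_{\mathbf{TM}_0}(3)$. So in the paper the subword complexity (via an external classification theorem, applied under the recurrence hypothesis) does the heavy lifting and the Abelian complexity eliminates one candidate; your proposal makes Abelian complexity the main engine and leaves to subword complexity a task --- pinning down the factor set of $\omega'$ --- for which you supply no valid argument. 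To salvage your route you would need to prove a classification-type statement yourself (that a recurrent word of the form $\mu(\omega')$, up to a leading letter, with $\p_\omega=\p_{\mathbf{TM}_0}$ has the same factor set as $\mathbf{TM}_0$), which is not easier than citing the known result.
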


\begin{proof} The ``only if'' part is clear. For the converse, we recall that in  
\cite{AbeBrl} it is shown that every recurrent infinite word  $\omega \in \{0,1\}^\nats$ whose subword complexity is equal to that of $\mathbf{TM}_0$ is either in the shift orbit closure of $\mathbf{TM}_0$ or is in the shift orbit closure of $\Delta (\mathbf{TM}_0)$ where $\Delta$ is the letter doubling morphism defined by $0\mapsto 00$ and $1\mapsto 11.$ However, if $\omega \in \{0,1\}^\nats$
is in the shift orbit closure of $\Delta (\mathbf{TM}_0),$ then it would contain the $4$ factors $111,110,100,000$ of length $3,$ and hence $\abp_{\omega}(3)=4.$ Thus $\omega$ is in the shift orbit closure of $\mathbf{TM}_0.$

\end{proof}

\section{Two Answers to a Question of G. Rauzy}\label{RauzyQuestion}

Recall that a Sturmian word is an aperiodic binary balanced word. Among the few existing families of words characterized using Abelian complexity, the following one concerning Sturmian words is the earliest one we know.\footnote{I.~Kabor\'e and T. Tapsoba also characterized using Abelian complexity the family of so-called quasi-Sturmian by insertion words (a subclass of the class of infinite words over a three-letter alphabet having subword complexity $n+2$) \cite{KaboreTapsoba2008TIA}. These words defined over a ternary alphabet verify $\abp(n) = 2$ for $n\neq 0$ even and $\abp(n) = 4$ for $n \neq 1$ odd.
} It is essentially due to 
E.M. Coven and  G.A. Hedlund  (an easy consequence of Lemma 4.02 and remark 4.07 in~\cite{CovHed}).

\begin{theorem}[E.M. Coven, G.A. Hedlund]\label{T:Sturm}
%\footnote{G: Where exactly is this result written in the paper by Coven and Hedlund? Could we give an exact numbering of result? I think it is not the case. The result is not exactly stating in Coven and Hedlund's paper. Nevertheless it is a clear consequence of their Lemma 4.02 and their remark 4.07.}
Let $\omega$ be a binary right infinite word. Then $\omega$ is aperiodic and balanced (i.e., a Sturmian word) if and only if $\abp_\omega(n)=2$ for all $n\geq 1.$
\end{theorem}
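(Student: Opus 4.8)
The plan is to prove both directions by reasoning about the set of Parikh vectors $\Psi_\omega(n)$, which for a binary word of length $n$ always consists of vectors of the form $(n-j,j)$, so that $\abp_\omega(n)$ equals the number of distinct values taken by $|U|_0$ over factors $U$ of length $n$. The condition $\abp_\omega(n)=2$ for all $n\geq 1$ is then exactly the statement that, for each $n$, the quantity $|U|_0$ takes precisely two consecutive integer values (consecutive by Lemma~\ref{F:basic}) as $U$ ranges over factors of length $n$.

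For the ``only if'' direction, I would start from the assumption that $\omega$ is Sturmian, i.e. aperiodic and balanced. Balancedness gives $||U|_0-|V|_0|\leq 1$ for all factors $U,V$ of equal length, so $|U|_0$ takes at most two distinct values for each fixed $n$, whence $\abp_\omega(n)\leq 2$. Aperiodicity, via Lemma~\ref{L:aperiodic}, forces $\abp_\omega(n)\geq 2$ for every $n\geq 1$ (a periodic word is detected by $\abp_\omega(p)=1$). Combining the two inequalities yields $\abp_\omega(n)=2$ for all $n\geq 1$.

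For the ``if'' direction, assume $\abp_\omega(n)=2$ for all $n\geq 1$. First, $\abp_\omega(p)=2\neq 1$ for every $p$, so by Lemma~\ref{L:aperiodic} the word $\omega$ is aperiodic; it remains to show $\omega$ is balanced. Suppose for contradiction it is not $1$-balanced: then there is a length $n$ and factors $U,V$ with $|U|_0-|V|_0\geq 2$. The main work is to leverage Lemma~\ref{F:basic}, which guarantees that every intermediate value of $|U|_0$ between the extreme values is also attained by some factor of length $n$. If the gap at length $n$ were at least $2$, this would produce at least three distinct values of $|U|_0$, hence $\abp_\omega(n)\geq 3$, contradicting the hypothesis. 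This shows directly that the imbalance in the $0$-count is at most $1$ at every length, and by symmetry (since $|U|_1=n-|U|_0$) the same holds for the $1$-count; thus $\omega$ is balanced, and being aperiodic it is Sturmian.

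The step I expect to be the crux is the ``if'' direction's passage from $\abp_\omega(n)=2$ to balancedness: one must argue that the two attained Parikh vectors at each length are genuinely \emph{adjacent}, i.e. of the form $(n-j,j)$ and $(n-j-1,j+1)$ rather than two values differing by more than one. This is precisely where Lemma~\ref{F:basic} does the heavy lifting, since it rules out any gap by interpolating every intermediate $0$-count, forcing the two values to be consecutive and hence the imbalance to be exactly $1$. Since the lemma is already available in the excerpt, the argument is short, but the conceptual content is entirely in recognizing that ``exactly two Parikh vectors at every length'' is equivalent to ``the letter counts are $1$-balanced.''
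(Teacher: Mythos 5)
Your ``only if'' direction is fine, and so is the balancedness half of your ``if'' direction: Lemma~\ref{F:basic} does rule out a gap of size at least $2$ between the $0$-counts of equal-length factors, exactly as you say. The genuine gap is the sentence ``$\abp_\omega(p)=2\neq 1$ for every $p$, so by Lemma~\ref{L:aperiodic} the word $\omega$ is aperiodic.'' Lemma~\ref{L:aperiodic} characterizes \emph{periodic} words, i.e.\ those with $\omega_{i+p}=\omega_i$ for \emph{all} $i$; so what it actually gives you is only that $\omega$ is not periodic. But in this paper ``aperiodic'' means \emph{not ultimately periodic}, which is strictly stronger, and the two notions genuinely come apart here: the word $01^\infty=0111\cdots$ has $\abp(n)=2$ for all $n\geq 1$ (its factors of length $n$ are $01^{n-1}$ and $1^n$), it is balanced and not periodic, yet it is ultimately periodic and hence not Sturmian. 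Your argument, read literally, would ``prove'' that $01^\infty$ is Sturmian.

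This is not a slip you can patch inside your write-up, because the example $01^\infty$ --- which the paper itself exhibits in its introduction --- shows that the ``if'' direction of the statement is false for arbitrary binary right-infinite words; that is precisely why the paper's careful formulation in Theorem~\ref{SturmianChar} places aperiodicity as a standing hypothesis instead of deriving it from the Abelian complexity. (Note also that the paper offers no proof of Theorem~\ref{T:Sturm} at all; it only cites Coven--Hedlund, so there is no internal argument to compare against.) What your reasoning correctly establishes is the equivalence: $\abp_\omega(n)=2$ for all $n\geq 1$ if and only if $\omega$ is balanced and not periodic. To conclude ``Sturmian'' you need an extra ingredient that upgrades ``not periodic'' to ``aperiodic'' --- for instance, assuming $\omega$ aperiodic from the outset as in Theorem~\ref{SturmianChar}, or assuming $\omega$ recurrent, since a recurrent ultimately periodic word is in fact periodic.
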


Inspired by this characterization of Sturmian words, Rauzy asked whether there exist aperiodic words on a $3$-letter alphabet such that $\abp(n)=3$ for all $n\geq 1$  (see section 6.2 in \cite{Rauzy2}). Our next two results provide two different solutions to Rauzy's question.

Let $p \geq 3$ be any integer, let $\omega'$ be any Sturmian word over $\{0,1\}$ and let $\omega = (p-1)(p-2)\ldots 2\omega'$ ($\omega$ is written over the alphabet $\{0, 1, \ldots, (p-1)\}$. As a consequence of Theorem~\ref{T:Sturm}, we can see that $\abp_\omega(n) = p$ for all $n \geq 1$ (in particular when $p = 3$). This provides one answer to Rauzy's question. Nevertheless it is not completely satisfactory since $w$ is not recurrent.  The next two results give two distinct classes of \textit{uniformly recurrent words} (each factor occurs infinitely often with bounded gaps) having constant ab-complexity equal to $3$.

\begin{theorem}\label{T:answer Rauzy 1} Let $\omega $ be an aperiodic uniformly recurrent balanced word on the alphabet $\{0,1,2\}.$ Then $\abp_{\omega}(n)=3$ for all $n\geq 1.$

\end{theorem}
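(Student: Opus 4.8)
The plan is to use the balance property to pin down the Parikh vectors of all factors of a given length $n$, and then show that exactly three distinct Parikh vectors occur. Since $\omega$ is balanced, for each letter $a \in \{0,1,2\}$ the values $|U|_a$ over all factors $U$ of length $n$ can differ by at most $1$. Combined with the constraint $|U|_0 + |U|_1 + |U|_2 = n$, this severely restricts the possible Parikh vectors: if we write the ``target'' frequencies as $n\mu_a$ where $\mu_a$ is the (uniform) frequency of letter $a$, then each $|U|_a$ is one of the two integers nearest to $n\mu_a$.

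First I would establish that $\omega$ has well-defined letter frequencies $\mu_0, \mu_1, \mu_2$ summing to $1$; this follows from uniform recurrence together with balance (balance forces the number of occurrences of each letter in windows of length $n$ to lie in an interval of length $1$, hence the frequencies exist). Then for a fixed $n$, each entry $|U|_a$ lies in $\{\lfloor n\mu_a \rfloor, \lceil n\mu_a \rceil\}$. Writing $n\mu_a = \lfloor n\mu_a \rfloor + \{n\mu_a\}$, the key observation is that the sum of the three entries must equal $n = \sum_a \lfloor n\mu_a \rfloor + \sum_a \{n\mu_a\}$, and since $\sum_a \{n\mu_a\}$ is an integer (either $1$ or $2$ when the $\mu_a$ are irrational and $n\mu_a \notin \ints$), the number of entries that must ``round up'' is exactly $\sum_a \{n\mu_a\} \in \{1,2\}$. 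So each admissible Parikh vector rounds up in exactly one coordinate (if the fractional parts sum to $1$) or in exactly two coordinates (if they sum to $2$); in either case there are exactly $\binom{3}{1} = 3$ candidate vectors, giving $\abp_\omega(n) \le 3$.

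For the lower bound $\abp_\omega(n) \ge 3$, I would show all three candidate vectors are actually attained. Here I expect to invoke Lemma~\ref{F:basic} as an interpolation tool: once I exhibit two factors realizing extremal entries in some coordinate, intermediate values in that coordinate are forced to appear. A cleaner route is to argue directly that balance plus aperiodicity forces enough variation. Specifically, aperiodicity rules out $\abp_\omega(n) = 1$ by Lemma~\ref{L:aperiodic}, and I would rule out $\abp_\omega(n) = 2$ by showing that if only two of the three rounding patterns occurred, then some letter would have a constant count across all length-$n$ windows, which (summing over a long stretch) would contradict either aperiodicity or the irrationality of the frequency. One must also handle the degenerate case where some $n\mu_a$ is an integer, i.e.\ where a fractional part vanishes; I would treat this by a limiting/continuity argument or by checking that such $n$ form a sparse set and the count still comes out to $3$.

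The main obstacle will be the lower bound, and in particular certifying that all three candidate Parikh vectors are genuinely realized for every $n \ge 1$ rather than merely bounding the count from above. The rounding argument gives $\abp_\omega(n) \le 3$ almost for free, but showing the three vectors all occur requires using the structure of balanced words more carefully — presumably through the analogue of Lemma~\ref{aper}, producing factors with prescribed prefixes and suffixes that push a chosen coordinate up or down. I would also need to confirm that the fractional parts $\{n\mu_a\}$ never all vanish simultaneously for $n \ge 1$ (which would collapse the count), and that when exactly one vanishes the remaining two still yield three distinct vectors; since the frequencies are irrational for an aperiodic balanced ternary word, at most one coordinate can be integral at a time, so this case analysis should close cleanly.
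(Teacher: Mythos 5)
Your upper bound is essentially sound (and takes a different, frequency-based route than the paper, which instead argues directly from balance that once $(i,j,k)$ and $(i+1,j-1,k)$ are Parikh vectors, only $(i+1,j,k-1)$ or $(i,j-1,k+1)$ can be added, and not both). Balance alone does give, via a Fekete-type subadditivity argument, that each letter $a$ has a frequency $\mu_a$ and that every length-$n$ factor satisfies $|U|_a \in \{\lfloor n\mu_a\rfloor, \lceil n\mu_a\rceil\}$ when $n\mu_a\notin\ints$, and your counting of rounding patterns then yields $\abp_\omega(n)\le 3$.

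The genuine gap is in the lower bound, and it is exactly where you lean on the assertion that ``the frequencies are irrational for an aperiodic balanced ternary word.'' That claim is not an elementary or citable-for-free fact: it is essentially the content of the structure theorem of P.~Hubert \cite{Hu}, which is precisely what the paper's proof invokes (an aperiodic uniformly recurrent balanced word on $\{0,1,2\}$ is, up to isomorphism, obtained from a Sturmian word of slope $\beta$ by writing $(01)^\infty$ on the $x$-positions and $2$ on the $y$-positions, whence the frequencies are $\beta/2,\beta/2,1-\beta$, all irrational). Your reduction is correct as far as it goes: if only two of the three candidate vectors occurred, some letter $a$ would have constant count $c$ in all windows of length $n$, so $\omega_i=a \iff \omega_{i+n}=a$, the positions of $a$ would be periodic, and $\mu_a=c/n$ would be rational. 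But ruling this out requires proving that no letter of an aperiodic balanced ternary word can occupy a periodic set of positions, and that is real work: one must show that a periodic placement of (say) the letter $2$ together with $1$-balance forces the $\{0,1\}$-filling to be periodic as well (e.g.\ for period $2$, balance on length-$3$ windows forces the filling to avoid both $00$ and $11$, hence to be $(01)^\infty$), which is a miniature version of Hubert's analysis and is nowhere supplied in your proposal. The same unproved irrationality also underlies your dismissal of the degenerate case $n\mu_a\in\ints$. By contrast, the paper gets the lower bound from Hubert's second result, that $\p_\omega(n)=2(n+1)$ for large $n$: this produces right special factors (either one $u$ with $u0,u1,u2$ all factors, or two $u,v$ with two extensions each), and a case analysis using the structure shows the extensions realize three distinct Parikh vectors. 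So your architecture (upper bound by rounding, lower bound by excluding constant coordinates) is viable, but as written the proof assumes, rather than proves, the one fact that carries the theorem's difficulty.
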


\begin{proof} Let $\omega$ be an aperiodic uniformly recurrent balanced word on the alphabet $\{0,1,2\}.$
P.~Hubert showed that up to word isomorphism, $\omega$ 
is obtained from a Sturmian word  ${\mathbf x}\in \{x,y\}^\nats$  by replacing the subsequence of $x$'s in $\mathbf x$ by the periodic word $(01)^\infty$ and the subsequence of $y$'s in $\mathbf x$ by the periodic word $2^\infty $ (see Theorem~1 in \cite{Hu}).  In particular we have that

\begin{enumerate}
\item Deleting all occurrences of $2$ in $\omega$ gives the periodic sequence $(01)^\infty.$ 
\item  $h(\omega)=\mathbf x$ where  $h:\{0,1,2\}\rightarrow \{x,y\}$ is the morphism  defined
by $h(0) = h(1) = x$, and $h(2)=y.$ 
\item If $a\in \{0,1\}$ and $ua$ and $va$ are  distinct factors of $\omega,$  then
$h(u)\neq h(v).$
\end{enumerate}

\noindent Item 3 follows from item 1 together with the fact that if $h(u)=h(v),$ then either $u=v,$ or $|u|=|v|$ and every occurrence of $0$ (respectively $1$) in $u$ is an occurrence of $1$ (respectively $0$) in $v$ (that is $u = (01)^k0$, $v = (10)^k1$ for some $k \geq 0$).

%Since $\omega$ is word isomorphic to $\mathbf y$ it suffices to show that $\abp_{\mathbf y}(n)=3$ for all $n\geq 1.$  
Since $\omega$ is a balanced word, we observe that if it has Parikh vectors $(i, j, k)$ and\\ $(i+1, j-1, k)$ for some integers $i, j, k$ then the only other possible vectors in $\Psi_{\omega}(n)$ are $(i+1,j, k-1)$ and $(i, j-1, k+1)$, and these two vectors cannot occur simultaneously in $\Psi_{\omega}(n)$. 
Consequently $\abp_{\omega}(n)\leq 3$ for all $n\geq 1$. %(see for instance the case when $\U_n$ is empty in the proof of Theorem~\ref{triboabcom}). 
In what follows, we will show that $\abp_{\omega}(n)\geq 3$ for all $n\geq 1.$

Applying Theorem 2 in \cite{Hu}, we deduce that $\p_{\omega}(n)=2(n+1)$ for all $n$ sufficiently large. This in turn implies that  for every $n\geq 1,$ either {\bf Case 1:} $\omega$ contains a (right special) factor
$u$ of length $n$ such that each of $u0,u1,u2$ are each factors of $\omega,$ or {\bf Case 2:} $\omega$ contains two (right special) factors $u$ and $v$ of length $n$ such that $ua,ub,vc,vd$ are each factors of $\omega$ for some $a,b,c,d \in \{0,1,2\}$ with $a\neq b$ and $c\neq d.$ In {\bf Case 1}, by considering the Parikh vector associated to the suffix of length $n$ of each of $u0,u1,u2,$ we obtain three distinct Parikh vectors, and hence $\abp_{\omega}(n)\geq 3.$

We next consider {\bf Case 2}. In this case we will show that 
each of $0,1,2$ is in $\{a,b,c,d\}.$ Assume to the contrary that
$2\notin \{a,b,c,d\}.$ Then $\{a,b,c,d\}=\{0,1\}.$  But then by item~1 above, neither $0$ nor $1$ occur in $u$ and in $v,$ and thus $u=2^n$ and $v=2^n$ contradicting that $u\neq v.$ Thus $2\in \{a,b,c,d\}.$
Next suppose that $0\notin \{a,b,c,d\}.$ Then $u1,u2,v1,v2$ are all factors of $\mathbf y.$ It follows by item 3 above that $h(u)\neq h(v).$ But as $h(1)\neq h(2),$ it follows that $h(u)$ and $h(v)$ are distinct right special factors of $\mathbf x$ of length $n,$ contradicting that a Sturmian word has exactly one right special factor of every length. Hence, we have $0\in \{a,b,c,d\}.$ Similarly we deduce that $1\in \{a,b,c,d\}.$ 

Having established that $\{a,b,c,d\}=\{0,1,2\},$  by considering the Parikh vector of $u,v,$ and of the suffix of length $n$ of each of $ ua,ub,vc,vd,$ we obtain at least
three distinct Parikh vectors, and hence $\abp_{\omega}(n)\geq 3.$

 \end{proof}

\begin{theorem}\label{T:answer Rauzy 2} Let $\omega'\in \{0,1\}^\nats$ be any aperiodic infinite word, and let $\omega$ be the image of $\omega'$ under the morphism $f$ defined by
$0\mapsto  012,$ and $ 1\mapsto  021.$ Then $\abp_{\omega}(n)=3$ for all $n\geq 1.$
\end{theorem}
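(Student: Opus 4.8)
The plan is to analyze the structure of factors of $\omega = f(\omega')$ where $f$ maps $0 \mapsto 012$ and $1 \mapsto 021$. The key observation is that every image block $f(a)$ is a permutation of the letters $\{0,1,2\}$, so each block of length $3$ contributes exactly one of each letter. This immediately suggests that the Parikh vectors of factors are tightly controlled. First I would establish the upper bound $\abp_\omega(n) \leq 3$ by examining how a factor of length $n$ decomposes relative to the block structure. Writing $n = 3q + r$ with $r \in \{0,1,2\}$, any factor of length $n$ starting at a given position spans some whole blocks together with a partial block at each end. Since each whole block contributes $(1,1,1)$ to the Parikh vector, the only variation comes from the partial blocks at the two ends, and I would argue that the number of distinct Parikh vectors arising this way is at most $3$ for each residue class of $n$.

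Next I would establish the lower bound $\abp_\omega(n) \geq 3$. By Lemma~\ref{L:aperiodic}, since $\omega'$ is aperiodic (and hence $\omega$ is aperiodic, because $f$ is an injective non-erasing morphism), we immediately get $\abp_\omega(n) \geq 2$ for all $n \geq 1$; the real work is ruling out $\abp_\omega(n) = 2$. The natural strategy is to exhibit, for each $n$, three factors realizing three distinct Parikh vectors. I would use the fact that $\omega'$ is aperiodic to invoke Lemma~\ref{aper}: $\omega'$ contains factors of the form $0u1$ and $1v0$ of any prescribed length. Applying $f$ and examining the images near the boundaries (the $012$ versus $021$ pattern is precisely what distinguishes the letter counts), I would locate factors of $\omega$ of each given length $n$ whose Parikh vectors differ in a way that produces a third value beyond the two forced ones. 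Concretely, I expect to compute the Parikh vectors of carefully chosen length-$n$ windows positioned so that one window straddles a $012$-block while another straddles a $021$-block at an offset that changes which letter is ``missing'' or ``doubled'' in the partial blocks.

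The main obstacle I anticipate is the case analysis in the lower bound according to the residue $r = n \bmod 3$ and the alignment of the window with the block boundaries. When $n \equiv 0 \pmod 3$, a window can align exactly with whole blocks, giving Parikh vector $(q,q,q)$, and I must show two further distinct vectors such as $(q+1,q-1,q)$ and $(q,q+1,q-1)$ genuinely occur; the distinction between $012$ and $021$ blocks is what makes the off-aligned windows produce asymmetric letter counts. For $r = 1$ and $r = 2$ the partial blocks contribute one or two letters, and I would track exactly which letters appear in the leading and trailing partial blocks depending on whether the adjacent full block is an image of $0$ or of $1$. Here the distinguishing feature $012$ versus $021$ (they differ by a transposition of the last two letters) is essential: it is what allows a shift of the window by one position to swap a $1$-count for a $2$-count, yielding a third Parikh vector that an arbitrary aperiodic $\omega'$ is forced to realize via the factors supplied by Lemma~\ref{aper}.

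A cleaner alternative for the lower bound, which I would try first to avoid heavy casework, is to reduce to the upper bound machinery: since $\abp_\omega$ is bounded, $\omega$ is $C$-balanced by Lemma~\ref{bounded=balanced}, and I would use Lemma~\ref{F:basic} to propagate the existence of intermediate Parikh-vector entries. If I can show that for each $n$ the first coordinate (count of $0$'s) takes at least two values differing by at least $2$, then Lemma~\ref{F:basic} hands me the intermediate value and hence three distinct vectors at once. Establishing that spread of $2$ in the $0$-count again comes back to comparing windows over $012$-blocks and $021$-blocks guaranteed by Lemma~\ref{aper}, so this route still relies on the block-permutation structure but organizes the argument around a single coordinate rather than a full enumeration.
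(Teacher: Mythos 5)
Your primary plan coincides with the paper's proof: the paper decomposes every factor of $\omega$ as $xf(u)y$ with $x$ a suffix of $012$ or $021$, $y$ a prefix of $012$ or $021$, and $u$ a factor of $\omega'$, gets the upper bound $\abp_{\omega}(n)\leq 3$ by counting the possible Parikh vectors in each residue class of $n \bmod 3$, and gets the lower bound from Lemma~\ref{aper}, which guarantees that $\omega'$ contains factors $0u1$ and $1v0$ of every length, hence that $xf(u)y$ is realizable for every suffix $x$ of $f(a)$ and prefix $y$ of $f(b)$ with $a\neq b$. So your second and third paragraphs, carried out, are exactly the intended argument.

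However, the ``cleaner alternative'' that you say you would try first is a dead end, and you should not route the proof through it. Carrying out the enumeration shows that for $n=3q$ the possible Parikh vectors are $(q,q,q)$, $(q,q+1,q-1)$, $(q,q-1,q+1)$; for $n=3q+1$ they are $(q+1,q,q)$, $(q,q+1,q)$, $(q,q,q+1)$; and for $n=3q+2$ they are $(q,q+1,q+1)$, $(q+1,q,q+1)$, $(q+1,q+1,q)$. Two consequences. First, the coordinate you proposed to track, the count of $0$'s, \emph{never} has spread $\geq 2$: it is identically $q$ when $3\mid n$ (so your illustrative vector $(q+1,q-1,q)$ cannot occur) and takes only the two values $q,q+1$ otherwise. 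Second, and fatally for the shortcut, when $n\not\equiv 0 \pmod 3$ \emph{every} coordinate takes only two values differing by $1$; the three vectors are distinguished by which coordinate is the odd one out, not by any single coordinate taking three values, so Lemma~\ref{F:basic} can never manufacture the third vector for you in those residue classes. (Only for $3\mid n$, and only in the $1$- or $2$-coordinate, does a spread of $2$ appear.) The lower bound therefore has to be proved as in your main plan and in the paper: exhibit, via Lemma~\ref{aper}, explicit windows $xf(u)y$ with the boundary pairs $(x,y)$ coming from blocks of opposite type, e.g.\ for $n=3q$ the windows $2f(u)02$ and $1f(v)01$, which realize $(q,q-1,q+1)$ and $(q,q+1,q-1)$ alongside the aligned window's $(q,q,q)$.
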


\begin{proof} We first note that:
\begin{enumerate}

\item For every factor $U$ of $\omega,$ there exist a  suffix $x$ (possibly empty) of $012$ or of $021,$ a prefix $y$ (possibly empty) of 
either $012$ or of $021,$ and a factor  $u$ (possibly empty) of $\omega ',$  such that  $U=xf(u)y.$ 

\item By Lemma~\ref{aper}, for every suffix $x$ of $012$ (respectively $021)$ and for every prefix $y$ of $021$ (respectively $012),$ and for every $k\geq 0,$ there exists a factor $u$ of $\omega'$ of length $k$ such that $xf(u)y$ is a factor of $\omega$ of length $3k+|x|+|y|.$

\end{enumerate}

In order to prove that $\abp_{\omega}(n)=3$ we consider separately the case where $n\equiv 0 \pmod 3,$  $n\equiv 1 \pmod 3,$ and $n\equiv 2 \pmod 3. $  In each case, we simply count the number of distinct Parikh vectors associated to words of the form $xf(u)y$ with $x$ and $y$  as in item 1 above and with
$|x|+|y|\equiv n \pmod 3.$ In each case this gives that $\abp_{\omega}(n)\leq 3.$ Similarly, by counting the number of distinct Parikh vectors associated to words of the form $xf(u)y$ with $x$ and $y$ as in item 2 above and with  $|x|+|y|\equiv n \pmod 3,$ we find that  $\abp_{\omega}(n)\geq 3.$

\end{proof}

\noindent Having answered Rauzy's original question, we may now ask:

\begin{problem} Does there exist a recurrent infinite word $\omega$ with
$\abp_{\omega}(n)=4$ for every $n\geq 1$?
\end{problem}

We suspect that the answer is NO! Using Hubert's main characterization of uniformly recurrent aperiodic balanced words given in \cite{Hu},  it can be shown that if 
$\abp_{\omega}(n)=4$ for every $n\geq 1,$ then $\omega$ is not balanced.
While we suspect that the answer to the above question is no,  Aleksi Saarela~\cite{aleksi} recently showed that for every positive integer $k$,
there exists an infinite word whose Abelian complexity at $n$ equals $k$ for all sufficiently large~$n$.

%
%\begin{theorem}[A.Saarela~\cite{aleksi}]
% Let $\omega = h(\omega')$, where $\omega' \in \{0,1\}^{\nats}$ 
%is a Sturmian word and $h$ is the morphism 
%$0 \mapsto 0^k,  1\mapsto 1^k$ with $k\in\nats$. Now $\abp_{\omega}(n) = k+1$ for all $n\geq k$.
%\end{theorem}

\section{\label{sec:repet}Bounded Abelian Complexity}

By a well-known result of V.~Ker\"anen \cite{Keranen1992ICALP}, Abelian squares are avoidable over quaternary alphabets.
This  is to be contrasted with the following result, which says that avoiding Abelian powers while keeping the Abelian complexity bounded is impossible.
%In this section we use van der Waerden's Theorem to show the following result.

% In \cite{Dekking1979} F.M.~Dekking showed that the fixed point of the morphism $0 \mapsto 011$ and $1 \mapsto 0001$ is  Abelian 4-power free. 

\begin{theorem}\label{bounded} 
Let $\omega$ be an infinite word on a finite alphabet %$A$ 
having bounded ab-complexity. 
Then $\omega$ contains an Abelian $k$-power for every positive integer $k.$
\end{theorem}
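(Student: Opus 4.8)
The plan is to reduce to the balanced case and then apply van der Waerden's Theorem to a carefully chosen finite coloring of the prefixes of $\omega$, arranged so that a monochromatic arithmetic progression is forced to produce a block of \emph{consecutive} Abelian-equivalent factors.

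First I would invoke Lemma~\ref{bounded=balanced}: bounded ab-complexity means $\omega$ is $C$-balanced for some positive integer $C$. For $n\geq 0$ write $P(n)=\Psi(\omega_0\omega_1\cdots\omega_{n-1})\in\nats^{\card(A)}$ for the Parikh vector of the length-$n$ prefix (with $P(0)$ the zero vector), so that the Parikh vector of the factor $\omega_i\cdots\omega_{j-1}$ is $P(j)-P(i)$. I would then color each integer $n\geq 0$ by the residue vector $P(n)\bmod (C+1)$, obtained by reducing each coordinate modulo $C+1$. This uses only $(C+1)^{\card(A)}$ colors, so it is a finite coloring of $\nats$.

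Next I would apply van der Waerden's Theorem to obtain, for the given $k$, a monochromatic arithmetic progression of length $k+1$, say $i,\,i+m,\,i+2m,\,\ldots,\,i+km$ with $m\geq 1$. Monochromaticity means $P(i+jm)\equiv P(i)\pmod{C+1}$ for every $j$, and telescoping shows that the Parikh vector $b_j:=P(i+jm)-P(i+(j-1)m)$ of the $j$-th length-$m$ block satisfies $b_j\equiv 0\pmod{C+1}$ for each $j=1,\ldots,k$. The crucial point is that these $k$ vectors must coincide: each $b_j$ is the Parikh vector of a factor of the \emph{fixed} length $m$, so by $C$-balancedness the integers $(b_1)_a,\ldots,(b_k)_a$ all lie in an interval of length at most $C$ for every letter $a$; being congruent to $0$ modulo $C+1$ and pairwise within distance $C$, they must be equal. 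Hence $b_1=\cdots=b_k$, and $\omega_i\omega_{i+1}\cdots\omega_{i+km-1}$ is an Abelian $k$-power with Abelian period $m$.

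I expect the main conceptual obstacle to be the choice of coloring, not any calculation. A naive approach---coloring the consecutive length-$\ell$ blocks of $\omega$ by their Parikh vectors and applying van der Waerden directly---fails, because the resulting monochromatic progression has an arbitrary common difference and so yields equal blocks that are \emph{not} adjacent, whereas an Abelian power requires consecutive equal blocks. Coloring the prefixes modulo $C+1$ is precisely what converts a monochromatic progression of arbitrary step into a statement about consecutive blocks, and the balance hypothesis then supplies the uniqueness-within-a-window that upgrades the congruence $b_j\equiv 0$ to genuine equality. Checking that $N=C+1$ is an admissible modulus (any $N>C$ works) and that the telescoping is valid are the only points needing care; everything else is routine.
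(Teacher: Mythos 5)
Your proof is correct, and it shares the paper's overall strategy --- apply van der Waerden's theorem to a finite coloring of positions built from prefix Parikh data, then use boundedness to upgrade a congruence between consecutive blocks to genuine equality --- but it differs in the key technical device. The paper never passes through balancedness: it assigns superincreasing integer weights $\alpha_1,\ldots,\alpha_r$ to the letters (Lemma~\ref{congo}) and colors position $t$ by the single scalar $\sum \omega_{[1,t]} \bmod N$; the point of the superincreasing weights is that a weighted sum of coefficients bounded by $M$ can vanish mod $N$ only if every coefficient vanishes, and this is what forces the Parikh vectors of the $k$ consecutive blocks to coincide. You instead invoke Lemma~\ref{bounded=balanced} to get $C$-balancedness and color by the prefix Parikh vector reduced coordinate-wise mod $C+1$, so that the congruence-to-equality step becomes a one-line observation in each coordinate: two integers congruent mod $C+1$ that differ by at most $C$ are equal. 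Your route eliminates Lemma~\ref{congo} entirely, at the harmless cost of using $(C+1)^{\card(A)}$ colors rather than $N$ (van der Waerden only needs finiteness); the paper's scalar coloring is more economical in colors but requires the auxiliary arithmetic lemma, so your argument is arguably the more transparent of the two. Both colorings serve the identical purpose of converting an arithmetic progression of arbitrary common difference into a statement about consecutive Abelian-equivalent blocks, a point your closing remark about the naive block-coloring correctly identifies as the crux.
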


%Theorem~\ref{bounded} should also be compared with Theorem~\ref{T:MH} due to M.~Morse and G.A.~Hedlund, which states that \textit{infinite words having bounded subword complexity are ultimately periodic}. 

\begin{remark}
Unlike with Abelian complexity, low subword complexity does not guarantee existence of repetitions.
For example, the Fibonacci word does not contain any 4th powers~\cite{MigPir1992}, yet its subword complexity is minimal amongst all aperiodic infinite words.
\end{remark}

\noindent We begin proving Theorem~\ref{bounded} with the following lemma:

\begin{lemma}\label{congo} Let $M$ and $r$ be positive integers. Then there exist positive integers $\alpha _1,\alpha_2,\ldots ,\alpha_r$ and $N$ such that whenever
\[\sum _{i=1}^rc_i\alpha_i \equiv 0\pmod {N}\] 
for integers $c_i$ with  $|c_i|\leq M$ for
$1\leq i\leq r,$ then $c_1=c_2=\cdots =c_r=0.$
\end{lemma}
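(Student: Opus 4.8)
The plan is to choose the $\alpha_i$ to be the powers of a single base that is large relative to $M$, and then to take $N$ large enough that the congruence is forced to be an honest integer equation, at which point uniqueness of base expansions finishes the argument. Concretely, I would set $b = 2M+1$, take $\alpha_i = b^{i-1}$ for $1 \le i \le r$, and put $N = b^r$. The guiding idea is that a sum $\sum_{i=1}^r c_i\alpha_i$ with each digit $c_i$ ranging over $\{-M, \ldots, M\}$ is exactly a \emph{balanced} base-$b$ numeral, and such numerals represent integers uniquely; the range $\{-M,\ldots,M\}$ has precisely $2M+1 = b$ elements, which is what makes the base choice fit the coefficient bound.

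The first step is a size estimate that converts the modular hypothesis into an exact equation. Using $|c_i| \le M$ and summing the geometric series, I would bound
\[
\left| \sum_{i=1}^r c_i\alpha_i \right| \le M\sum_{i=1}^r b^{i-1} = M\cdot\frac{b^r-1}{b-1} = \frac{b^r-1}{2} < \frac{N}{2}.
\]
Since the value of the sum then lies strictly between $-N$ and $N$, the only multiple of $N$ it can equal is $0$, so the assumption $\sum_i c_i\alpha_i \equiv 0 \pmod N$ upgrades to $\sum_{i=1}^r c_i\alpha_i = 0$ over $\ints$. The second step is to read off $c_1 = \cdots = c_r = 0$ from this integer equation: reducing modulo $b$ gives $c_1 \equiv 0 \pmod b$, and since $|c_1| \le M < b$ this forces $c_1 = 0$; dividing through by $b$ and iterating then kills each coefficient in turn.

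As for where the difficulty lies, there is essentially no deep obstacle once the base idea is in hand — the real content is simply the observation that one can trade a modular condition for an exact equation by choosing $N$ exponentially large in $M$ and $r$. The only points that require care are aligning the digit range $\{-M,\ldots,M\}$ with the base $b = 2M+1$ and checking that the geometric-sum bound comes out strictly below $N/2$ (rather than merely below $N$), so that $0$ is genuinely the unique multiple of $N$ in the attainable range.
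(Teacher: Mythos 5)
Your proposal is correct and takes essentially the same approach as the paper: both choose weights growing fast enough that any combination with coefficients bounded by $M$ has absolute value strictly less than $N$, so the congruence upgrades to the exact equation $\sum_i c_i\alpha_i = 0$, and then conclude by uniqueness of bounded-digit representations. The paper constructs the $\alpha_i$ inductively via $\alpha_{i+1} > M\sum_{j=1}^{i}\alpha_j$ and extracts $c_i = 0$ by a contradiction at the largest nonzero index, whereas your $\alpha_i = (2M+1)^{i-1}$, $N = (2M+1)^r$ is a concrete instantiation of that same growth condition, with the coefficients killed bottom-up by reduction modulo $2M+1$; the difference is purely cosmetic.
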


\begin{proof} We define $\alpha_i$ inductively as follows: Set $\alpha _1=1,$
and for $i\geq 1,$ choose \[\alpha_{i+1}>M\sum_{j=1}^{i}\alpha_j.\]
Let $N$ be any integer with $N>M\sum_{j=1}^r \alpha_j.$
Now suppose that
\[\sum _{i=1}^rc_i\alpha_i \equiv 0\pmod {N}\] with each $|c_i|\leq M.$
Then
\[| \sum _{i=1}^rc_i\alpha_i|\leq \sum_{i=1}^r|c_i|\alpha_i\leq M\sum_{i=1}^r\alpha_i<N.\]
Hence
\[\sum _{i=1}^rc_i\alpha_i=0.\]
To see that each $c_i=0,$ suppose to the contrary that some $c_i\neq 0,$ and let $t$ denote the largest positive integer $1\leq t\leq r$ for which $c_t\neq 0.$
If $t=1,$ then we have $c_1=c_1\alpha_1=0,$ a contradiction. Otherwise,  if $t>1,$
\[|c_t\alpha_t|=|-\sum_{i=1}^{t-1}c_i\alpha_i|\leq \sum_{i=1}^{t-1}|c_i|\alpha_i\leq M\sum_{i=1}^{t-1}\alpha_i<\alpha _t\leq |c_t\alpha_t|,\]
a contradiction.

\end{proof}

\noindent We now recall the following well-known result due to van de Waerden:

\begin{theorem}[van der Waerden's theorem - see, \textit{e.g.}, chapter~3 in \cite{Lothaire1983book}]
If $\nats$ is partitioned into $k$ classes, then one of the classes contains arbitrarily long arithmetic progressions.
\end{theorem}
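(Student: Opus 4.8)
The plan is to deduce the stated (infinite) theorem from its finitary version and to prove the latter by van der Waerden's original double-induction argument. The finitary statement asserts that for all positive integers $k$ and $\ell$ there is an integer $W(k,\ell)$ such that every partition of $\{1,2,\ldots,W(k,\ell)\}$ into $k$ classes (equivalently, every map $\chi:\{1,\ldots,W(k,\ell)\}\to\{1,\ldots,k\}$, a \emph{$k$-colouring}) contains a monochromatic arithmetic progression of length $\ell$. Granting this, the infinite version follows quickly: given a partition of $\nats$ into $k$ classes, for each $\ell$ the initial segment $\{1,\ldots,W(k,\ell)\}$ carries a monochromatic progression of length $\ell$, lying in one of the $k$ classes; since $\ell$ ranges over an infinite set while there are only $k$ classes, some single class receives such a progression for infinitely many values of $\ell$, and hence contains arbitrarily long arithmetic progressions.

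For the finitary statement I would induct on $\ell$. The case $\ell=1$ is trivial and $\ell=2$ holds with $W(k,2)=k+1$ by pigeonhole. Fix $\ell$, assume $W(k',\ell)$ exists for every number of colours $k'$, and prove that $W(k,\ell+1)$ exists for every $k$. The engine is the notion of \emph{colour-focusing}: monochromatic progressions $A_1,\ldots,A_m$ of length $\ell$, with common differences $d_1,\ldots,d_m$ and initial terms $a_1,\ldots,a_m$, are colour-focused at a point $f$ if $a_i+\ell d_i=f$ for every $i$ and the $A_i$ carry $m$ pairwise distinct colours. The key observation is that $m$ progressions colour-focused at a point $f$ lying in the coloured interval force a monochromatic progression of length $\ell+1$ as soon as $m=k$: the colour of $f$ must coincide with the colour of some $A_i$, and then $A_i\cup\{f\}$ is the desired progression.

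The inner induction therefore runs on $m=0,1,\ldots,k$, establishing a length $n_m$ such that every $k$-colouring of $\{1,\ldots,n_m\}$ yields either a monochromatic progression of length $\ell+1$ or $m$ progressions of length $\ell$ colour-focused at a point of the interval; then $W(k,\ell+1)=n_k$ by the observation above. To pass from $m$ to $m+1$ I would take a long block, cut it into consecutive sub-blocks of length $n_m$, and regard each sub-block as a single ``super-colour'' recording its entire internal colouring pattern, of which there are at most $k^{n_m}$. Applying the outer inductive hypothesis $W(k^{n_m},\ell)$ to these super-colours produces $\ell$ sub-blocks whose starting positions are in arithmetic progression (say with common difference $T$) and which are identically coloured. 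Inside the first such sub-block the induction hypothesis supplies $m$ colour-focused progressions $A_1,\ldots,A_m$ with focus $F_0$ (or else we are already done). Shearing each $A_i$ across the identical sub-blocks, i.e.\ replacing its difference $d_i$ by $d_i+T$ so that its $r$-th term sits at the same relative position inside the $r$-th sub-block and hence keeps its colour, yields $m$ monochromatic progressions now focused at $g:=F_0+\ell T$, while the chain of focus points $F_0,F_0+T,\ldots,F_0+(\ell-1)T$ forms one further monochromatic progression also focused at $g$. Either the colour of $F_0$ matches some colour among $A_1,\ldots,A_m$ (whence that progression together with $F_0$ already has length $\ell+1$), or it is new, giving $m+1$ colour-focused progressions at $g$; taking the block roughly twice as long as the span of the sub-block progression keeps $g$ inside it.

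The main obstacle is not any single estimate but the careful bookkeeping in the focusing step: verifying that identical internal patterns transport colours correctly under the shear, that all the new progressions genuinely share the common focus $g$, and that the interval lengths are chosen so that $g$ and all relevant points lie inside the coloured region. The resulting bounds on $W(k,\ell)$ grow extremely fast, since each outer step feeds an exponentially larger colour count $k^{n_m}$ into the inner van der Waerden application, but only their existence is needed here.
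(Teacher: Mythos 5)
The paper does not prove this statement at all: van der Waerden's theorem is quoted as a known black box, with a pointer to chapter~3 of Lothaire, and is then only \emph{applied} (in the proof of Theorem~\ref{bounded}, to the colouring $\nu(t)=\sum\omega_{[1,t]} \bmod N$). Your proposal therefore goes strictly beyond what the paper supplies, and what you give is the classical double-induction proof with colour-focusing (essentially the Graham--Rothschild presentation of van der Waerden's original argument). I checked the steps and they are sound: the reduction from the infinite to the finitary statement is correct (infinitely many lengths $\ell$ land in one of $k$ classes, and an AP of length $\ell$ contains APs of all shorter lengths, so that class has arbitrarily long ones); the key observation that $k$ colour-focused progressions of length $\ell$ plus the focus force a monochromatic $(\ell+1)$-progression is right; and the inner step $m\to m+1$ is handled correctly --- the super-colouring of sub-blocks with at most $k^{n_m}$ patterns, the application of the outer hypothesis $W(k^{n_m},\ell)$ to get $\ell$ identically coloured blocks in arithmetic progression with difference $T$, the shear $d_i\mapsto d_i+T$ preserving colours because corresponding positions in identical blocks agree, the foci chain $F_0, F_0+T,\ldots,F_0+(\ell-1)T$ supplying the $(m+1)$-st progression focused at $g=F_0+\ell T$, and the dichotomy on whether the colour of $F_0$ is new. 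You also correctly flag the two bookkeeping points (distinctness of the $m+1$ colours, and interval length keeping $g$ inside the coloured region). For the purposes of this paper your proof is more than is needed --- the authors only need the statement as an input, and any proof (including Furstenberg's ergodic-theoretic one) would serve --- but as a self-contained blind proof it is complete and correct.
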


\begin{proof}[Proof of Theorem~\ref{bounded}]
Let $r=\mbox{Card}(A).$
By hypothesis there exists a positive integer $M$ such that
$\abp_{\omega}(n)\leq M$ for every $n\geq 1.$ Thus for every
$(a_1,a_2,\ldots ,a_r),(b_1,b_2,\ldots ,b_r)\in \Psi_{\omega}(n),$
we have $|a_i-b_i|\leq M$ for $1\leq i\leq r.$ Let $\alpha _1,\alpha_2,\ldots ,\alpha_r$ and $N$ be as in Lemma~\ref{congo}.
Up to word isomorphism, we may regard $\omega=\omega_1\omega_2\omega_3\ldots $ as an infinite word on the alphabet $\{\alpha_1,\alpha_2,\ldots ,\alpha_r\}.$
For each $1\leq i\leq j$ we set $\omega_{[i,j]}=\omega_i\ldots \omega_j,$ and \[\sum \omega_{[i,j]}=\omega_i+\cdots +\omega_j.\]

Now consider the following function:
\[\nu :\{1,2,3,\ldots \}\rightarrow \{0,1,\ldots, N-1\}\]
defined by
\[\nu(t)=\sum\omega_{[1,t]} \pmod{N}.\]
By van der Waerden's Theorem, for every positive integer $k,$ there exist
positive integers $t_0$ and $s$ such that
\begin{equation}\label{zanzi}\nu(t_0)=\nu(t_0+s)=\nu(t_0+2s)=\cdots =\nu(t_0+ks).\end{equation}

\noindent For each $1\leq j\leq k,$ set \[\omega^{[j]}=\omega_{[t_0+(j-1)s+1,t_0+js]}.\]
Then by \eqref{zanzi}
\begin{equation}\label{sum}\sum \omega^{[j]}\equiv 0 \pmod {N}\end{equation}
for each $1\leq j\leq k.$

We will show that
$\Psi(\omega^{[j]})=\Psi(\omega^{[1]})$ for every $1\leq j\leq k.$
Set $\Psi(\omega^{[j]})=(a_1^{[j]},a_2^{[j]},\ldots ,a_r^{[j]}).$ By \eqref{sum}
\[\sum _{i=1}^ra_i^{[j]}\alpha_i\equiv \sum_{i=1}^ra_i^{[1]}\alpha_i \pmod{N},\] and hence
\[\sum _{i=1}^r (a_i^{[j]}-a_i^{[1]})\alpha_i \equiv 0 \pmod {N}.\]
Moreover as $|\omega^{[j]}|=|\omega^{[1]}|$ for each $1\leq j\leq k,$ we have that $|a_i^{[j]}-a_i^{[1]}|\leq M.$ By Lemma~\ref{congo} we deduce that
$a_i^{[j]}-a_i^{[1]}=0$ and hence that $\Psi(\omega^{[j]})=\Psi(\omega^{[1]})$ for every $1\leq j\leq k.$ Thus  the factor $\omega^{[1]}\omega^{[2]}\cdots \omega^{[k]}$ is an Abelian $k$- power of $\omega .$

\end{proof}

\begin{corollary} Let $k$ be a positive integer and $\omega$ an infinite word which avoids Abelian $k$-powers. Then $\omega$ is arbitrarily imbalanced, that is, $\omega$ is not $C$ balanced for any positive integer $C.$
\end{corollary}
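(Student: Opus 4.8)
The plan is to prove the contrapositive. Suppose $\omega$ is $C$-balanced for some positive integer $C$. By Lemma~\ref{bounded=balanced}, this is equivalent to saying $\abp_\omega$ is bounded, so $\omega$ has bounded ab-complexity. Then Theorem~\ref{bounded} applies and guarantees that $\omega$ contains an Abelian $k$-power for every positive integer $k$. In particular $\omega$ does \emph{not} avoid Abelian $k$-powers, contradicting the hypothesis of the corollary. Hence, if $\omega$ avoids Abelian $k$-powers (for the given $k$, or even for a single $k$), then $\omega$ cannot be $C$-balanced for any $C$, which is exactly the assertion that $\omega$ is arbitrarily imbalanced.

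More explicitly, the first step is to recall the definition: $\omega$ is $C$-balanced means $\bigl| |U|_a - |V|_a \bigr| \leq C$ for all letters $a \in A$ and all factors $U$, $V$ of $\omega$ of equal length. The statement ``$\omega$ is arbitrarily imbalanced'' is the negation of ``$\omega$ is $C$-balanced for some $C$.'' The second step is to invoke Lemma~\ref{bounded=balanced} to translate the balance condition into the ab-complexity condition needed to fire Theorem~\ref{bounded}. The third step is the logical contraposition itself: Theorem~\ref{bounded} says bounded ab-complexity forces the \emph{presence} of Abelian $k$-powers for \emph{every} $k$, so avoiding them (even for one value of $k$) forbids bounded ab-complexity, hence forbids $C$-balancedness.

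The only subtle point worth noting is the quantifier bookkeeping. Theorem~\ref{bounded} produces Abelian $k$-powers for \emph{all} $k$ simultaneously, whereas the corollary hypothesizes avoidance of Abelian $k$-powers for a \emph{fixed} $k$. Since avoiding Abelian $k$-powers for one fixed $k$ already contradicts ``contains an Abelian $k$-power for that $k$,'' the argument goes through with room to spare; one does not need avoidance for all $k$. This is the main (and essentially only) thing to get right, and it is not so much an obstacle as a caution to state the contrapositive at the correct quantifier strength.

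In summary, I would write: Assume for contradiction that $\omega$ is $C$-balanced for some positive integer $C$. By Lemma~\ref{bounded=balanced}, $\abp_\omega$ is bounded, so by Theorem~\ref{bounded} the word $\omega$ contains an Abelian $k$-power, contradicting the assumption that $\omega$ avoids Abelian $k$-powers. Therefore $\omega$ is not $C$-balanced for any positive integer $C$, i.e., $\omega$ is arbitrarily imbalanced.
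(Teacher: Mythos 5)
Your proof is correct and is exactly the paper's argument: the paper derives this corollary immediately from Theorem~\ref{bounded} together with Lemma~\ref{bounded=balanced}, which is precisely the contrapositive you spell out. Your quantifier remark (avoidance for a single $k$ suffices to contradict the conclusion of Theorem~\ref{bounded}) is accurate and just makes explicit what the paper leaves implicit.
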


\begin{proof} This follows immediately from Theorem~\ref{bounded} together with Lemma~\ref{bounded=balanced}.
\end{proof}

\noindent Theorem~\ref{bounded} naturally gives rise to the following  question.

\begin{problem}\label{mainprob}
Does every uniformly recurrent infinite word with bounded Abelian complexity begin with an Abelian $k$-power for each positive integer $k$? 
\end{problem}
This problem seems difficult to solve even for some very well-known words.
For example, it can be shown that every shift of the Tribonacci word begins in an Abelian $k$-power for all $k$,
but we do not know  if this is true for every word in its orbit closure.
It can also be shown that every shift of the Thue-Morse word begins in an Abelian $6$-power,
but again, we do not know whether this holds for larger Abelian powers or for every word in the orbit closure of the Thue--Morse word.
In the next section, however,  we show that the question does hold true in the case of Sturmian words, even in a very strong form.

\section{Abelian repetitions in Sturmian words}\label{SturmianSec}

In this section, we prove the following theorem, thereby answering Open Problem~\ref{mainprob} in the affirmative in the case of Sturmian words.

\begin{theorem} \label{T:ab-k pow in Sturm}
For every Sturmian word $\omega$ and every positive integer $k$, there exist two integers $\ell_1$ and $\ell_2$ such that 
each position in $\omega$ begins in an Abelian $k$-power $U_1U_2\ldots U_k$ with Abelian period $\ell_1$ or $\ell_2,$ that is $|U_i|\in \{\ell_1,\ell_2\}.$
In particular, every Sturmian word begins in an Abelian $k$-power for all positive integers $k$.
%
%For all integers $k \geq 2$, there exists an integer $n\geq 0$ such that each position in $c_{\alpha}$ begins with
%an Abelian $k$-power of length $kq_n$ or $k q_{n+1}$.
%In particular,  every Sturmian word begins in an Abelian k-power  for every integer $k\geq 1$.
\end{theorem}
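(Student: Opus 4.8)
The plan is to exploit the concrete description of Sturmian words via their slope. Recall that a Sturmian word $\omega$ of irrational slope $\alpha \in (0,1)$ can be realized, up to a choice of intercept, as $\omega_n = \lfloor (n+1)\alpha + \beta \rfloor - \lfloor n\alpha + \beta \rfloor$ for some $\beta$, so that for any factor $U$ of length $\ell$ starting at position $i$ we have $|U|_1 = \lfloor (i+\ell)\alpha + \beta\rfloor - \lfloor i\alpha + \beta\rfloor$, and hence $|U|_1 = \lfloor \ell\alpha\rfloor$ or $\lceil \ell\alpha\rceil$ depending only on the fractional part $\{i\alpha + \beta\}$. The key reformulation is: the factor of length $\ell$ beginning at position $i$ has $|U|_1 = \lceil \ell\alpha\rceil$ precisely when $\{i\alpha + \beta\}$ lies in a specific subinterval of the unit circle of length $\{\ell\alpha\}$ (identifying $\nats$ with an orbit under rotation by $\alpha$). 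I would phrase everything in terms of the rotation $R_\alpha : x \mapsto x + \alpha \pmod 1$ acting on the circle, with the coding partition into two intervals.

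First I would fix $\ell$ and analyze when $k$ consecutive blocks of length $\ell$ starting at positions $i, i+\ell, \ldots, i+(k-1)\ell$ are all Abelian equivalent. Each block $U_j$ occupies positions $i + (j-1)\ell$ through $i + j\ell - 1$, and its number of $1$'s is determined by whether $\{(i + (j-1)\ell)\alpha + \beta\}$ falls in the ``high'' interval. So all $k$ blocks are Abelian equivalent if and only if the $k$ points $R_\alpha^{i}(x_0), R_\alpha^{i+\ell}(x_0), \ldots, R_\alpha^{i+(k-1)\ell}(x_0)$ all lie on the same side of the relevant threshold, i.e. the $k$ points $x, x + \ell\alpha, \ldots, x + (k-1)\ell\alpha$ (mod 1) all avoid a ``bad'' arc of length $\{\ell\alpha\}$ (or $1 - \{\ell\alpha\}$). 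Thus I want to choose $\ell$ so that $\{\ell\alpha\}$ is very close to $0$ (or to $1$), forcing the bad arc to be tiny; then a segment of the $R_{\ell\alpha}$-orbit of length $k$ starting anywhere will have all its points avoiding that arc. By the three-distance theorem, or simply by Weyl/continued-fraction approximation, I can pick $\ell$ (a denominator $q_m$ of a convergent of $\alpha$, or a suitable multiple) with $\|\ell\alpha\|$ as small as desired; concretely, to guarantee $k$ blocks I need the arc short enough and the step $\ell\alpha$ controlled so that $k$ steps stay within the complementary arc.

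The reason two periods $\ell_1, \ell_2$ are needed rather than one is the uniformity over all starting positions: a single $\ell$ gives an Abelian $k$-power starting at most positions $i$, but the positions whose rotation-image lands in the (small but nonempty) bad region near the partition boundary may fail. The remedy is to cover the circle with two choices: pick $\ell_1$ with $\{\ell_1\alpha\}$ small and $\ell_2$ with $\{\ell_2\alpha\}$ small but with the two associated bad arcs positioned so that their intersection is empty (equivalently, choosing $\ell_1, \ell_2$ so the boundary points $-\ell_1\alpha, \ldots$ and $-\ell_2\alpha, \ldots$ are spread apart). Concretely I would take $\ell_1$ and $\ell_2$ to be two consecutive convergent denominators (say $q_m$ and $q_{m+1}$), for which $\|q_m\alpha\|$ and $\|q_{m+1}\alpha\|$ approach $0$ from opposite sides; then every point $x$ of the circle lies in the ``good'' region for at least one of the two, so every starting position $i$ admits a $k$-power with period $\ell_1$ or $\ell_2$.

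The main obstacle I expect is the uniformity-over-all-positions part, i.e. proving that the two good regions genuinely cover the whole circle and that the needed $k$ orbit-steps stay good for the covering value — this requires quantitative control tying $k$, the smallness of $\{\ell_i\alpha\}$, and the placement of the bad arcs together, rather than merely finding one good period. I would handle it by making the bad arcs for $\ell_1$ and $\ell_2$ lie on opposite sides of the partition endpoints (using that convergents approximate $\alpha$ alternately from above and below), so that their union's complement is all of the circle, and then checking that for the chosen convergent index $m$ (large enough relative to $k$) the $k$-term arithmetic progression $x, x+\ell_i\alpha, \ldots$ never crosses into the bad arc. Everything else — translating block counts into interval membership and counting $1$'s via floor functions — is routine bookkeeping with the rotation coding.
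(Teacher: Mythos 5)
Your proposal is correct and is essentially the paper's own argument: reduce to the rotation coding of the characteristic word $c_\alpha$, choose the Abelian period $\ell$ so that $\{\ell\alpha\}$ is very close to $0$ or to $1$, and take $\ell_1 = q_n$ and $\ell_2 = q_{n+1}$ (consecutive continued-fraction denominators of $\alpha$, which approximate from opposite sides) so that the two small failure regions on the circle are disjoint and every starting position is good for at least one of the two periods. The paper packages these same ideas as Lemma~\ref{Rlemma} (Abelian equivalence of equal-length blocks whose starting points are close and whose endpoint codings agree) together with the Case~1/Case~2 split on $\{i\alpha\}$ in Theorem~\ref{abelthm}; your bad-arc bookkeeping anchored at the threshold $1-\{\ell\alpha\}$ is a mild variant of that case split.
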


%By \emph{Abelian period} of a word $w$, we mean an integer $p$ such that $w$ can be written in the form
%$w = u_1 u_2 \cdots u_n$ with all the words  $u_i$ pairwise Abelian equivalent and of the length$p$.

\begin{remark}
Theorem~\ref{T:ab-k pow in Sturm} should be contrasted with the fact that there exist Sturmian words whose initial critical exponent is equal to $2$ (see \cite{BerHolZam}). We also note that the existence of {\it two} Abelian periods in Theorem ~\ref{T:ab-k pow in Sturm} is optimal in the sense that any word with the property that every position starts with an Abelian $k$-power with a fixed Abelian period $m$ is necessarily ultimately periodic.
\end{remark}

Let us recall \cite{BerSee2002} that an infinite word is Sturmian if and only if its set of factors coincides with that of a characteristic word. A \emph{characteristic word} is an infinite word $c_{\alpha}$, depending on an irrational number $\alpha$ with $0<\alpha <1$, such that
\[
c_{\alpha}(n) = \lfloor \alpha(n+1) \rfloor - \lfloor \alpha n \rfloor
\]
for all $ n\geq 1$. Equivalently  the characteristic word $c_{\alpha}$ can also  be defined by
\[
c_{\alpha}(n) = R^n(\alpha) \qquad (n\geq 1),
\] 
where 
\[
R^n(\alpha) = 
\left\{
\begin{array}{rl}
0 & \mbox{if $\{n\alpha\} < 1 - \alpha$;}\\
1 & \mbox{otherwise.}
\end{array}
\right.
\]

\noindent For positive integers $i\leq j$ we put 
\[c_{\alpha} [i,j]=c_{\alpha}(i)c_\alpha(i+1)\cdots c_{\alpha}(j).\]

In the proof of the next lemma, we use the well-known fact that, for all real numbers $x,y,\alpha$, we have 
\begin{equation}\label{absfrac}
\abs{\fpart{x + \alpha}  - \fpart{y + \alpha} }=
\begin{cases}
\abs{\fpart{x} - \fpart{y}} & \text{or} \\ 
1-\abs{\fpart{x} - \fpart{y}}. &\\
\end{cases}
\end{equation}

\begin{lemma}\label{Rlemma}
Suppose that  $\alpha$ is irrational with $0<\alpha<1/2$.
Suppose also that $i,j\geq 1$ are integers such that $\abs{ \{i \alpha \} - \{j\alpha\} }< \alpha$.
If $k\geq 1$ is  an integer such that 
\[    
R^{i+k}(\alpha) = R^{j+k}(\alpha),
\]
then the words
\begin{equation}\label{abelLemma}
c_{\alpha}[i,i+k]\quad \mbox{and} \quad 
c_{\alpha}[j,j+k]
\end{equation}
are Abelian equivalent.

\end{lemma}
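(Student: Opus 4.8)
The plan is to reduce Abelian equivalence of the two binary factors to equality of their numbers of $1$'s, and then to evaluate these counts by telescoping. Since both words in \eqref{abelLemma} have length $k+1$, they are Abelian equivalent precisely when they contain the same number of letters $1$. Writing $N_i$ for the number of $1$'s in $c_{\alpha}[i,i+k]$ and recalling $c_{\alpha}(n)=\ipart{\alpha(n+1)}-\ipart{\alpha n}$, the sum $N_i=\sum_{n=i}^{i+k}c_{\alpha}(n)$ telescopes to $\ipart{\alpha(i+k+1)}-\ipart{\alpha i}$. Using the elementary identity $\ipart{x+y}=\ipart{x}+\ipart{\fpart{x}+y}$ with $x=\alpha i$ and $y=\alpha(k+1)$, this becomes $N_i=\ipart{\fpart{\alpha i}+\alpha(k+1)}$, and likewise $N_j=\ipart{\fpart{\alpha j}+\alpha(k+1)}$. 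So the goal is to prove $\ipart{\fpart{\alpha i}+\alpha(k+1)}=\ipart{\fpart{\alpha j}+\alpha(k+1)}$.

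Next I would split off the final rotation step so that the hypothesis $R^{i+k}(\alpha)=R^{j+k}(\alpha)$ can be invoked directly. Since $\alpha(k+1)=k\alpha+\alpha$, a second application of the floor identity gives $N_i=\ipart{\fpart{\alpha i}+k\alpha}+\ipart{\fpart{(i+k)\alpha}+\alpha}$, where I used $\fpart{\fpart{\alpha i}+k\alpha}=\fpart{(i+k)\alpha}$. The second floor equals $1$ exactly when $\fpart{(i+k)\alpha}\geq 1-\alpha$, that is, exactly when $R^{i+k}(\alpha)=1$. Hence $N_i=A_i+R^{i+k}(\alpha)$ with $A_i:=\ipart{\fpart{\alpha i}+k\alpha}$, and symmetrically $N_j=A_j+R^{j+k}(\alpha)$. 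Because $R^{i+k}(\alpha)=R^{j+k}(\alpha)$ by hypothesis, it now suffices to show $A_i=A_j$.

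For this last step I would assume without loss of generality (the statement is symmetric in $i,j$) that $\fpart{\alpha i}\geq\fpart{\alpha j}$, so that $0\leq\fpart{\alpha i}-\fpart{\alpha j}<\alpha$. Then $A_i\geq A_j$, and since the interval $(\fpart{\alpha j}+k\alpha,\ \fpart{\alpha i}+k\alpha]$ has length less than $\alpha<1$ it contains at most one integer, whence $A_i-A_j\in\{0,1\}$. Suppose for contradiction that $A_i-A_j=1$. Writing $w_i=\fpart{(i+k)\alpha}=\fpart{\alpha i}+k\alpha-A_i$ and $w_j=\fpart{(j+k)\alpha}=\fpart{\alpha j}+k\alpha-A_j$, a short computation gives $w_j-w_i=1-\bigl(\fpart{\alpha i}-\fpart{\alpha j}\bigr)>1-\alpha$. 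Since $w_i,w_j\in[0,1)$, this forces $w_i<\alpha$ and $w_j>1-\alpha$. As $\alpha<1/2$ yields $\alpha<1-\alpha$, we conclude $w_i<1-\alpha$ and $w_j>1-\alpha$, so $R^{i+k}(\alpha)=0\neq 1=R^{j+k}(\alpha)$, contradicting the hypothesis. Therefore $A_i=A_j$, giving $N_i=N_j$ and the desired Abelian equivalence.

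The hard part is exactly this final contradiction: translating the appearance of an extra integer in the floor interval into a forced disagreement of the two $R$-values. This is where both standing assumptions are indispensable — the bound $\fpart{\alpha i}-\fpart{\alpha j}<\alpha$ pushes $w_j-w_i$ past $1-\alpha$, while $\alpha<1/2$ separates the thresholds $\alpha$ and $1-\alpha$, so that $w_i$ and $w_j$ are driven to opposite sides of the cut $1-\alpha$ that defines the values $R^{i+k}(\alpha)$ and $R^{j+k}(\alpha)$.
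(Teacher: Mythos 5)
Your proof is correct, and it takes a genuinely different route from the paper's, although both start from the same telescoping identity $\abs{W_i}_1 = \ipart{(i+k+1)\alpha} - \ipart{i\alpha}$. The paper works with distances between fractional parts: from $R^{i+k}(\alpha)=R^{j+k}(\alpha)$ it deduces $\abs{\fpart{(i+k)\alpha}-\fpart{(j+k)\alpha}} < 1-\alpha$, then invokes the fact in Eq.~\eqref{absfrac} (adding a common real to $x$ and $y$ either preserves $\abs{\fpart{x}-\fpart{y}}$ or replaces it by its complement) to propagate the hypothesis $\abs{\fpart{i\alpha}-\fpart{j\alpha}}<\alpha$ forward, obtaining $\abs{\fpart{(i+k+1)\alpha}-\fpart{(j+k+1)\alpha}}<\alpha$ as well; writing each floor as $x-\fpart{x}$ then bounds the integer $\bigl\lvert \abs{W_j}_1 - \abs{W_i}_1 \bigr\rvert$ by $2\alpha < 1$, forcing it to vanish. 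You instead establish the exact decomposition $N_i = A_i + R^{i+k}(\alpha)$ with $A_i = \ipart{\fpart{i\alpha}+k\alpha}$, which isolates precisely where the hypothesis on $R$ enters, and then settle $A_i = A_j$ by the dichotomy $A_i - A_j \in \{0,1\}$ together with a contradiction: a discrepancy of $1$ would push $\fpart{(i+k)\alpha}$ and $\fpart{(j+k)\alpha}$ to opposite sides of the cut $1-\alpha$, contradicting $R^{i+k}(\alpha)=R^{j+k}(\alpha)$. Your version never needs Eq.~\eqref{absfrac} and replaces the paper's final inequality estimate by exact integer bookkeeping, which makes the role of each hypothesis (the bound $\alpha$ on $\abs{\fpart{i\alpha}-\fpart{j\alpha}}$, the condition $\alpha<1/2$, and the agreement of the $R$-values) completely explicit; the paper's argument is shorter once Eq.~\eqref{absfrac} is on the table and stays closer to the geometric picture of rigid rotation of the circle, which is the viewpoint used throughout Section~\ref{SturmianSec}.
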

\begin{proof}
In Eq.~\eqref{abelLemma}, denote the word on the left by $W_i$ and the one on the right by $W_j$. 
Since $R^{i+k}(\alpha) = R^{j+k}(\alpha)$,  we have 
\[
\abs{ \{(i+k)\alpha\} - \{(j+k)\alpha\}  } < \max\{ \alpha, 1-\alpha\} = 1 - \alpha.
\]
Hence from $\abs{ \{i \alpha \} - \{j\alpha\} }< \alpha$ and Eq.~\eqref{absfrac}, we deduce that 
\[
\abs{ \{i \alpha \} - \{j\alpha\} } = \abs{ \{(i+k)\alpha\} - \{(j+k)\alpha\}  }.
\]
%(In other words we cannot have $\abs{ \{(i+k)\alpha\} - \{(j+k)\alpha\}  }  = 1 - \abs{ \{i \alpha \} - \{j\alpha\} }$). 
Hypothesis $R^{i+k}(\alpha) = R^{j+k}(\alpha)$ also implies that\\
\[ \abs{ \{(i+k)\alpha\} - \{(j+k)\alpha\}  }  = \abs{ \{(i+k+1)\alpha\} - \{(j+k+1)\alpha\}  }\] and so 
%Since $\abs{ \{i \alpha \} - \{j\alpha\} }< \alpha$ and $R^{i+k}(\alpha) = R^{j+k}(\alpha)$, it follows that 
\[
\abs{ \{(i+k+1)\alpha\} - \{(j+k+1)\alpha\}  } < \alpha.
\]
The number of letters 1 in $W_i$ is given by
\[
\abs{W_i}_1 = \sum_{h = 0}^k c_{\alpha}(i+h) = \sum_{h=0}^k \bigl( \lfloor(i+h+1)\alpha\rfloor  - \lfloor (i + h)\alpha\rfloor \bigr) =  \lfloor(i+k+1)\alpha\rfloor  - \lfloor i\alpha\rfloor,
\]
and similarly for $W_j$.
These give
\begin{align*}
\bigl\lvert \abs{W_j}_1 - \abs{W_i}_1 \bigr\rvert& =
\bigl\lvert  \bigl( \lfloor(j+k+1)\alpha\rfloor  - \lfloor j\alpha\rfloor\bigr) - \bigl( \lfloor  (i+k+1)\alpha\rfloor -  \lfloor i\alpha\rfloor  \bigr) \bigr\rvert \\
   &= \bigl\lvert  \lfloor i\alpha\rfloor - \lfloor j \alpha\rfloor + \lfloor(j+k+1)\alpha\rfloor - \lfloor(i+k+1)\alpha\rfloor \bigr\rvert\\
   &= \bigl\lvert i\alpha -\{i\alpha\} - j\alpha + \{ j\alpha \} + (j+k+1)\alpha - \\
   & \quad\quad  \{(j+k+1)\alpha\} - (i+k+1)\alpha + \{(i+k+1)\alpha\} \bigr\rvert \\
   &\leq \bigl\lvert  \{j\alpha\} -\{ i\alpha\} \bigr\rvert +  \bigl\lvert  \{(i+k+1)\alpha\} -\{(j+k+1)\alpha\} \bigr\rvert \\
   & < 2 \alpha <1.
 \end{align*}
Consequently, $W_i$ and $W_j$ are Abelian equivalent.
\end{proof}

%
%\framed{G: In next theorem, I replaced $k \geq 1$ with $k \geq 2$ since  Abelian $k$-powers are defined only for $k \geq 2$. (Also an change in the corollary.
%}

In the proof  of the next theorem, we will use the following fact:
for all real numbers $x,y$ and integers $p\geq 1$, we have 
\begin{equation}\label{fpart}
\fpart{x + py} =  \fpart{x} + p\fpart{y} - (p-q),
\end{equation}
where $q$ is the integer for which 
\begin{equation*}
p-q \leq \fpart{x} + p\fpart{y} < p - q + 1.
\end{equation*}
%Also this equation is proved in the appendix.

\begin{theorem}\label{abelthm}
Let $\alpha$ be irrational with $0<\alpha < 1$. For all positive integers $k$ and $i$,
there is an Abelian $k$-power occurring at position~$i$ in $c_{\alpha}$.
\end{theorem}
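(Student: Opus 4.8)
The plan is to reduce to the case $0<\alpha<1/2$ and then, for the given $k$ and $i$, to exhibit a single Abelian period $\ell$ for which the $k$ consecutive length-$\ell$ blocks starting at position $i$ are pairwise Abelian equivalent, verifying this through the two hypotheses of Lemma~\ref{Rlemma}. To dispose of the range $1/2<\alpha<1$, a direct computation from the defining formula $c_\alpha(n)=\ipart{\alpha(n+1)}-\ipart{\alpha n}$ shows that $c_{1-\alpha}(n)=1-c_\alpha(n)$ for all $n\geq 1$; that is, $c_{1-\alpha}$ is obtained from $c_\alpha$ by exchanging the two letters. Since a letter permutation sends Abelian $k$-powers to Abelian $k$-powers, it suffices to treat $0<\alpha<1/2$, which is precisely the hypothesis of Lemma~\ref{Rlemma}.

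Fix $k$ and $i$. By Dirichlet's approximation theorem (for instance using the denominators of the continued-fraction convergents of $\alpha$) I may choose a positive integer $\ell\geq 2$ for which $\epsilon:=\min(\fpart{\ell\alpha},1-\fpart{\ell\alpha})$ is as small as I wish. Set $j_t=i+t\ell$ and $B_t=c_\alpha[j_t,j_t+\ell-1]$ for $t=0,1,\ldots,k-1$, so that $B_0B_1\cdots B_{k-1}=c_\alpha[i,i+k\ell-1]$ is the factor occurring at position $i$. It is enough to prove that each $B_t$ is Abelian equivalent to $B_0$, which I would obtain by applying Lemma~\ref{Rlemma} with its parameters $(i,j,k)$ replaced by $(i,j_t,\ell-1)$. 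This demands two facts for every $t=1,\ldots,k-1$: \textbf{(1)} $\abs{\fpart{i\alpha}-\fpart{j_t\alpha}}<\alpha$, and \textbf{(2)} $R^{i+\ell-1}(\alpha)=R^{j_t+\ell-1}(\alpha)$.

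Both would follow from the smallness of $\epsilon$ together with the irrationality of $\alpha$. Writing $\ell\alpha\equiv\pm\epsilon\pmod 1$ with one fixed sign, I have $\fpart{j_t\alpha}=\fpart{i\alpha\pm t\epsilon}$ and $\fpart{(j_t+\ell-1)\alpha}=\fpart{(i-1)\alpha\pm(t+1)\epsilon}$. Thus the points relevant to (1) form a $k$-term arithmetic progression on the circle of common gap $\epsilon$, lying in an arc of length $(k-1)\epsilon$ about the fixed point $\fpart{i\alpha}$, while those relevant to (2) lie in an arc of length at most $k\epsilon$ adjacent to the fixed point $\fpart{(i-1)\alpha}$. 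Since $i$ is fixed, $\fpart{i\alpha}$ lies in the open interval $(0,1)$, and $\fpart{(i-1)\alpha}\neq 1-\alpha$ (otherwise $i\alpha\in\ints$, impossible). Hence for $\epsilon$ small enough the first arc has diameter below $\alpha$ and does not meet the cut $0\equiv 1$, which gives (1); and the second arc lies strictly on one side of the threshold $1-\alpha$, which gives (2). The one configuration needing a word of care is $i=1$, where the base point $\fpart{(i-1)\alpha}$ equals $0$; but there the relevant points are bounded away from $0$ by at least $\epsilon$ (their index $t+1$ is at least $1$), so the arc still sits entirely on one side of the cut. Choosing $\ell$ with $\epsilon$ this small therefore makes every $B_t$ Abelian equivalent to $B_0$, so $B_0B_1\cdots B_{k-1}$ is an Abelian $k$-power of period $\ell$ occurring at position $i$.

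The only genuine difficulty is to control both clusters simultaneously with a single choice of $\ell$: hypotheses (1) and (2) refer to two different base points and two different thresholds (the cut $0\equiv 1$ for the starting positions, and $1-\alpha$ for the $R$-values), yet a single $\ell$ fixes one gap and one direction. What dissolves the obstacle is that, $i$ being fixed, both base points $\fpart{i\alpha}$ and $\fpart{(i-1)\alpha}$ are constants separated from their thresholds by irrationality, whereas the cluster radius $k\epsilon$ can be driven to $0$; consequently any sufficiently sharp approximation $\ell$ works regardless of the sign of $\ell\alpha\bmod 1$.
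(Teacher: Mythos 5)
Your proof is correct, and it shares the paper's skeleton: reduce to $0<\alpha<1/2$ by exchanging letters, cut $c_\alpha[i,i+k\ell-1]$ into $k$ blocks of one common length $\ell$ chosen so that $\ell\alpha$ is very close to an integer, and verify the two hypotheses of Lemma~\ref{Rlemma} for the pairs $(i,\, i+t\ell)$ with parameter $\ell-1$. Where you genuinely depart from the paper is in how the cluster of fractional parts is kept away from the discontinuities. The paper fixes $\delta\in(0,\alpha)$ and runs a two-case analysis on the location of $\{i\alpha\}$, choosing the \emph{direction} of the approximation accordingly: $\{\ell\alpha\}<\delta/k$ (upward drift) in Case 1, and $1-(\alpha-\delta)/k<\{\ell\alpha\}<1$ (downward drift) in Case 2; the point of this design is that the case boundaries and the admissible ranges for $\{\ell\alpha\}$ do not depend on $i$. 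You avoid the case analysis altogether by using that $i$ is fixed: the base points $\{i\alpha\}$ and $\{(i-1)\alpha\}$ sit at fixed positive distances from the cut $0\equiv 1$ and from $1-\alpha$ (by irrationality, with $i=1$ treated separately), so \emph{any} $\ell$ with $k\min(\{\ell\alpha\},1-\{\ell\alpha\})$ below those distances works, whatever the sign of $\ell\alpha\bmod 1$. This buys a cleaner, case-free proof of the present statement. What the paper's extra structure buys is uniformity in $i$, which is precisely what its proof of Theorem~\ref{T:ab-k pow in Sturm} exploits: there the convergent denominators $q_n$ and $q_{n+1}$ serve as two Abelian periods valid simultaneously for \emph{every} position, one for each of the two cases. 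Since your choice of $\ell$ depends on $i$ (through the distances of the base points to the boundaries), your argument proves this theorem but would need the paper's case split reinstated to yield the two-period strengthening. One small point of care: for your hypothesis (2) with $i>1$, the arc around $\{(i-1)\alpha\}$ must avoid the cut $0\equiv 1$ as well as the point $1-\alpha$, since the region where $R^n(\alpha)=0$ has two boundary points on the circle; this is covered by your fixed-base-point, shrinking-radius argument, but your text names only the threshold $1-\alpha$ for that arc.
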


\begin{proof}
It is a well-known fact that $c_{1-\alpha}$ is obtained from $c_{\alpha}$ by exchanging letters 0 and 1  (see \cite[Cor. 2.2.20]{BerSee2002}). Therefore,
without loss of generality, we may suppose that $\alpha<1/2$. 
To show that there exists an Abelian $k$-power at the $i$th position in $c_{\alpha}$, let us choose a real number $\delta$ with
\begin{equation}\label{interval1}
 0 < \delta < \alpha.
 \end{equation}
%Here the number $1-2\alpha$ is the length of the interval $[\, \alpha, 1-\alpha\,]$.
 We have two cases to consider:

\textbf{Case 1}. We have
\begin{equation}\label{range}
 0 \leq \{ i \alpha \} < \alpha -\delta, \quad  \mbox{or} \quad  \alpha \leq \{ i\alpha \} < 1-\delta.
\end{equation}
None of these intervals is empty by Eq.~\eqref{interval1} and since $\alpha<1/2$. 
%The situation is depicted below.

%\begin{center}
%\begin{tikzpicture}
%\draw (0,0) -- (10,0);

%\draw[thick]  (0,-0.3) -- (0,0.3);
%\node at (0,-0.5)  {0};
%\draw[thick]  (10,-0.3) -- (10,0.3);
%\node at (10, -0.5){1};

%\filldraw (3.88, 0) circle (0.3ex);
%\node at (3.88,-0.4){$\alpha$};

%%\filldraw (6.12, 0) circle (0.3ex);
%%\node at (6.12, -0.4){$1-\alpha$};

%\filldraw (3, 0) circle (0.3ex);
%\node at (3,0.4){$\alpha- \delta$};
%\draw (0,0) rectangle (3,0.2);

%%\filldraw (5.24, 0) circle (0.3ex);
%%\node at (5.24,0.4){$1-\alpha- \delta$};
%%\draw (3.88,0) rectangle (5.24,0.2);

%\filldraw (9.12,0) circle (0.3ex);
%\node at (9.12, 0.4){$1-\delta$};
%%\draw (6.12,0) rectangle (9.12,0.2);
%\draw (3.88,0) rectangle (9.12,0.2);
%\end{tikzpicture}
%\end{center}

Let $\ell$  be a positive integer such that 
\begin{equation}\label{ellequ1}
\{ \ell \alpha \} < \delta/k.
\end{equation}
This choice of $\ell$ and the inequality $\{i\alpha\} < 1 - \delta$ imply that, for all $0 \leq j \leq k$, we have 
\[
\{i \alpha\} + j \{\ell \alpha\} < 1 - \frac{(k-j)\delta}{k} \leq 1,
\]
%\framed{The above inequality chain used to end with $1 - \frac{(k-j)\delta}{k} < 1$, which is incorrect when $j=k$.
%However, the strict inequality is not needed here anyway. }
so from Eq.~\eqref{fpart} we get 
\[
\bigl\{ \bigl(i+j\ell\bigr)\alpha \bigr\}  =  \{ i \alpha \}  + j\{\ell \alpha  \}.
\]
This and Eq.~\eqref{ellequ1} give
\begin{equation}\label{incrineq}
\{i\alpha\} \leq  \{(i+j\ell) \alpha \}   < \{i \alpha \} + \delta
\end{equation}
for all $0\leq j \leq k$.
%\begin{equation}\label{incrineq}
%\{i\alpha\} < \{(i+\ell)\alpha \}  % <  \{(i+2\ell)\alpha \}
% < \cdots < \{(i+k\ell) \alpha \}  = \{ i\alpha\} + k\{\ell \alpha\} < \{i \alpha \} + \delta.
%\end{equation}
%Therefore,  since $\alpha$ is in one of the intervals in~\eqref{range}, we have
%\[
%R^i(\alpha) = R^{i+\ell}(\alpha) = \cdots = R^{i+(k-1)\ell}(\alpha) = R^{i+k\ell}(\alpha).
%\]
These imply that if $\{i \alpha \} < \alpha - \delta$, then we have
\[
R^{i+\ell -1}(\alpha) = R^{i+2\ell - 1}(\alpha) = \cdots= R^{i+k\ell - 1}(\alpha) = 1.
\]
Similarily, if $\{i \alpha\} \geq \alpha$, inequalities~\eqref{incrineq} imply
\[
R^{i+\ell -1}(\alpha) = R^{i+2\ell - 1}(\alpha) = \cdots= R^{i+k\ell - 1}(\alpha) = 0.
\]
By these observations, Lemma~\ref{Rlemma} says that the  $k$ words 
\[
c_{\alpha}[i + j\ell, i + (j+1)\ell -1] \qquad (0 \leq j \leq k-1)
\]
% 
%\[
%c_{\alpha}[\,i, i+\ell-1\,], \quad  c_{\alpha}[\,i + \ell,  i+2\ell-1\,], \quad \ldots, \quad c_{\alpha}[\,i + (k-1)\ell,  i+k\ell-1\,] 
%\]
of length $\ell$ are Abelian equivalent. Therefore  $c_{\alpha}[\, i, i+ k\ell -1 \,]$ is an Abelian $k$-power.

\textbf{Case 2}. We have
\begin{equation}\label{range2}
 \alpha - \delta \leq \{ i \alpha \} < \alpha, \quad  \quad \mbox{or} \quad  1-\delta \leq \{ i\alpha \} < 1.
\end{equation} 

%as depicted below.

%\begin{center}
%\begin{tikzpicture}
%\draw (0,0) -- (10,0);

%\draw[thick]  (0,-0.3) -- (0,0.3);
%\node at (0,-0.5)  {0};
%\draw[thick]  (10,-0.3) -- (10,0.3);
%\node at (10, -0.5){1};

%\filldraw (3.88, 0) circle (0.3ex);
%\node at (3.88,-0.4){$\alpha$};

%%\filldraw (6.12, 0) circle (0.3ex);
%%\node at (6.12, -0.4){$1-\alpha$};

%\filldraw (3, 0) circle (0.3ex);
%\node at (3,0.4){$\alpha- \delta$};
%\draw (3,0) rectangle (3.88,0.2);

%%\filldraw (5.24, 0) circle (0.3ex);
%%\node at (5.24,0.4){$1-\alpha- \delta$};
%%\draw (5.24,0) rectangle (6.12,0.2);

%\filldraw (9.12,0) circle (0.3ex);
%\node at (9.12, 0.4){$1-\delta$};
%\draw (9.12,0) rectangle (10,0.2);

%\end{tikzpicture}
%\end{center}

Now we  choose a positive integer $\ell$ such that 
\begin{equation}\label{ellequ2}
1-  \frac{  \alpha-\delta}{k}         < \{\ell \alpha \} < 1.
\end{equation}
%where $1 - 2\alpha - \delta>0$ is the length of the interval $[\,\alpha,  1-\alpha -\delta\,]$, which  is nonempty by~Eq.~\eqref{interval1}.
Since  $\{i\alpha\} \geq \alpha -\delta$, we have, for all $0 \leq j \leq k$, 
\[
1 + j > \{i \alpha\} + j \{\ell \alpha\} > j + \frac{(k-j)(\alpha-\delta)}{k} \geq j.
\]
%\framed{As before, this used to end with $j + \frac{(k-j)(\alpha-\delta)}{k} > j$, incorrect when $j=k$.}
Hence by Eq.~\eqref{fpart},
\[
\bigl\{ \bigl(i+j\ell\bigr)\alpha \bigr\}  =  \{  i \alpha \}  - j(1- \{\ell \alpha  \}),
\]
and so the lower bound for $\{\ell\alpha\}$ in Eq.~\eqref{ellequ2} gives
\begin{equation}\label{decrineq}
\{i\alpha\} - (\alpha -\delta) < \{(i+j\ell)\alpha \} \leq \{i\alpha \}
\end{equation}
for all $0\leq  j \leq k$.

%\begin{equation}\label{decrineq}
%\{i\alpha \} > \{ (i+\ell)\alpha\} > \cdots > \{(i+k\ell)\alpha \} = \{i \alpha \} - k(1-\{\ell\alpha\}) > \{i\alpha\} -  \min\{ \alpha -\delta, 1 - 2\alpha - \delta  \}. 
%\end{equation}
%

%Therefore,  since $\alpha$ is in one of the intervals in~\eqref{range2}, we have
%\[
%R^i(\alpha) = R^{i+\ell}(\alpha) = \cdots = R^{i+(k-1)\ell}(\alpha) = R^{i+k\ell}(\alpha).
%\]
Now if $\{i \alpha \} \geq 1 - \delta$,  we have
\[
\{ (i+j\ell)\alpha \} > 1 - \alpha > \alpha
\]
for all $0\leq  j \leq k$, and this gives
\[
R^{i+\ell -1}(\alpha) = R^{i+2\ell - 1}(\alpha) = \cdots= R^{i+k\ell - 1}(\alpha) = 0.
\]
Otherwise $\{i \alpha \} <  \alpha$,  and  we have
\[
R^{i+\ell -1}(\alpha) = R^{i+2\ell - 1}(\alpha) = \cdots= R^{i+k\ell - 1}(\alpha) = 1.
\]
By these equations and Eq.~\eqref{decrineq},  Lemma~\ref{Rlemma} implies that
the  $k$ words  
\[
c_{\alpha}[i + j\ell, i + (j+1)\ell - 1] \qquad (0 \leq j \leq k-1)
\]
%
%\[
%c_{\alpha}[\,i, i+\ell-1\,], \quad  c_{\alpha}[\,i + \ell,  i+2\ell-1\,], \quad \ldots, \quad c_{\alpha}[\,i + (k-1)\ell,  i+k\ell-1\,] 
%\]
of length $\ell$ are Abelian equivalent, and so the word $c_{\alpha}[\,i, i +k\ell -1\,]$ is an Abelian  $k$-power. 

\end{proof}

Let us denote by  $\alpha = [\, 0;a_1,a_2, a_3, \ldots \,]$
the continued fraction expansion of $\alpha$.  For $n\geq 0$, denote
\[
\frac{p_n}{q_n} = [\, 0;a_1, \ldots, a_n\, ],
\]
where ${\rm gcd}(q_n , p_n) = 1$.  
We will need  the next two basic  properties of  continued fractions. 
\begin{itemize}
\item[(1)] We have $q_{n+1} > q_n$ for all $n\geq 1$. 
\item[(2)] If $n\geq 0$ is even, we have
\begin{equation} \label{evencase}
0 <  \alpha - \frac{p_n}{q_n}  < \frac{1}{q_n q_{n+1}};
\end{equation}
and if $n$ is odd, we have
\begin{equation}\label{oddcase}
0 <  \frac{p_n}{q_n} - \alpha  < \frac{1}{q_n q_{n+1}}.
\end{equation}
\end{itemize}
%Second, for all $n\geq 0$, we have
%\[
%\abs{\alpha - \frac{p_{n+1}}{q_{n+1}} } <  \bigl| \alpha - \frac{p_{n}}{q_{n}} \bigr|. \fbox{NEEDED?}
%\]

%\begin{theorem} \label{T:ab-k pow in Sturm}
%For every Sturmian word $\omega$ and every positive integer $k$, there exist two integers $\ell_1$ and $\ell_2$ such that 
%each position in $\omega$ has an occurrence of an Abelian $k$-power  with Abelian period $\ell_1$ or $\ell_2$.
%In particular, every Sturmian word begins in an Abelian k-power for all positive integers $k$.
%%
%%For all integers $k \geq 2$, there exists an integer $n\geq 0$ such that each position in $c_{\alpha}$ begins with
%%an Abelian $k$-power of length $kq_n$ or $k q_{n+1}$.
%%In particular,  every Sturmian word begins in an Abelian k-power  for every integer $k\geq 1$.
%\end{theorem}
\begin{proof}[Proof of Theorem~\ref{T:ab-k pow in Sturm}]
Let $\alpha$ denote the slope of  the Sturmian word $\omega$. The set of factors of~$\omega$ coincides with that of the characteristic word $c_{\alpha}$, so it suffices to prove the claim for~$c_{\alpha}$.

As in the proof of Theorem~\ref{abelthm}, we assume that $\alpha < 1/2$ and let $\delta$ be  a real number with  $0 < \delta < \alpha$. 
%Let us now fix $\delta = \alpha / 2$.   
As $\lim_{n\rightarrow \infty}q_n=+\infty$ there exists an even  integer $n\geq 0$ such that 
\[
q_{n+1} >   \frac{k}{ \min\{ \delta,  \alpha - \delta\} }.
\]
Eq.~\eqref{evencase} then implies that 
\[
\{ q_n \alpha\} = q_n \alpha - p_n < \frac{1}{q_{n+1}} < \frac{ \min\{ \delta, \alpha - \delta\} }{k} \leq \frac{\delta}{k}.
\]
Consequently, if $\{i\alpha\}$ is contained in one of the intervals of Case 1 in the proof of Theorem~\ref{abelthm}, we may choose  
$\ell = q_n$, and then  the word $c_{\alpha}[\, i, i+ k\ell -1 \,]$ is an Abelian $k$-power with Abelian period~$q_n$.

If $\{i \alpha\}$ is in one of the intervals of Case 2,  we may choose $\ell = q_{n+1}$. Indeed, then by Eq.~\eqref{oddcase}, we have
\[
1 - \{ q_{n+1}\alpha\} =  p_{n+1} - \alpha q_{n+1} < \frac{1}{q_{n+2}} < \frac{1}{q_{n+1}} <  \frac{ \min\{ \delta, \alpha - \delta\} }{k}.
\]
That is
\[
1 - \frac{\alpha - \delta}{k}   < \{ q_{n+1}\alpha \} < 1.
\]
Now by the proof of Case 2, the word $c_{\alpha}[\, i, i+ k\ell -1 \,]$ is an Abelian $k$-power with Abelian period $q_{n+1}$; 
the claim follows.
\end{proof}

\begin{remark}
The property of Sturmian words given in Theorem~\ref{T:ab-k pow in Sturm} does  not provide a characterization of  Sturmian words. For instance, this same property is also satisfied by any word of the form $f(\omega)$ with $\omega$ a Sturmian word and where $f$ is the morphism defined by $f(0) = 00$ and $f(1) = 01.$
\end{remark}

\begin{remark}
According to the terminology introduced in~\cite{Saari2009EJC}, Sturmian words are \emph{everywhere Abelian repetitive}.
\end{remark}

%In the proof of Lemma~\ref{bounded=balanced}, we mentioned that for any $C$-balanced word over an alphabet $A$ and for any $n \geq 1$, $\abp(n) \leq (C+1)^{{\rm card}(A)}$. This bound is rather large for arbitrary $C$ and $A$.

%\begin{problem}
%Give a close exact formula for the maximal bound of the Abelian complexity of a $C$-balanced word over an alphabet $A$.
%\end{problem}

%
%Examples provided in this paper show that there exist fixed points of morphisms with bounded Abelian complexity (\textit{e.g.} the Thue-Morse word) and other with a maximal polynomial complexity (see the fixed point of the morphism, $f$ defined by $f(a) = abc$, $f(b) = bbb$ and $f(c) = ccc$).

%\begin{problem}
%Is it possible to classify the possible forms of Abelian complexity of fixed points of morphisms? Similarly what can be said about the values $C$ such that a fixed point of morphism is $C$-balanced?
%\end{problem}

\bibliographystyle{plain}
\bibliography{RSZ3}

\end{document}